\pdfoutput=1
\documentclass[11pt,a4paper]{amsart}

\usepackage[margin=1in]{geometry}
\usepackage[utf8]{inputenc}
\usepackage[T1]{fontenc}
\usepackage{lmodern,microtype,mathtools,amssymb,amsthm,fontawesome5,thmtools}
\makeatletter
\@ifundefined{newcounteralias}{}{%
  \renewcommand\thmt@autorefsetup{%
    \@xa\def\csname\thmt@envname autorefname\@xa\endcsname
      \@xa{\thmt@thmname}%
  }%
}
\makeatother

\usepackage{environ}
\usepackage[noadjust,compress]{cite}
\usepackage[dvipsnames]{xcolor}
\usepackage[colorlinks,citecolor=brown]{hyperref}
\usepackage[nameinlink,capitalize,noabbrev]{cleveref}
\usepackage{todonotes}
\usepackage{orcidlink}

\newcommand{\prz}[2][]{}
\newcommand{\jack}[2][]{}
\newcommand{\james}[2][]{}

\makeatletter
\newcounter{restate}
\NewEnviron{savethm}[2]{%
  \global\expandafter\let\csname restate@#2\endcsname\BODY
  \begin{#1}\label{#2}\BODY\end{#1}%
}
\newcommand{\restate}[2]{%
  \global\advance\c@restate\@ne
  \begingroup
  \@namedef{the#1}{\ref*{#2}}%
  \@namedef{theH#1}{restate.\arabic{restate}}%
  \let\label\@gobble
  \begin{#1}[Restatement]%
    \csname restate@#2\endcsname
  \end{#1}%
  \endgroup
}
\makeatother

\AtBeginDocument{\belowdisplayshortskip\belowdisplayskip}

\newcommand{\emaillink}[1]{\NoCaseChange{\href{mailto:#1}{\raisebox{0.05em}{\textcolor{gray}{\fontsize{9}{12}\sffamily\bfseries\faIcon[regular]{envelope}}}}}}

\newcommand{\defn}[1]{\textcolor{RoyalBlue}{#1}}

\newcommand{\NN}{\mathbb{N}}
\newcommand{\RR}{\mathbb{R}}
\newcommand{\Oh}{\mathcal{O}}
\newcommand{\calC}{\mathcal{C}}

\newcommand{\calP}{\mathcal{P}}
\newcommand{\calS}{\mathcal{S}}
\newcommand{\calU}{\mathcal{U}}
\renewcommand{\Pr}{\mathsf{Pr}}
\newcommand{\dbar}{\smash[t]{\overline d}}
\newcommand{\Gbar}{\smash[t]{\overline G}}
\newcommand{\Qbar}{\smash[t]{\overline Q}}
\newcommand{\Rbar}{\smash[t]{\overline R}}
\newcommand{\dotpartial}{\smash[t]{\dot\partial}}
\newcommand{\Ksubdiv}[1]{\smash[t]{K_{#1}^{(1)}}}
\newcommand{\Ksubdivnosmash}[1]{K_{#1}^{(1)}}

\let\leq\leqslant
\let\geq\geqslant
\let\setminus\smallsetminus
\let\epsilon\varepsilon

\newtheorem{theorem}{Theorem}[section]

\newtheorem{corollary}[theorem]{Corollary}
\newtheorem{conjecture}[theorem]{Conjecture}
\newtheorem{claim}[theorem]{Claim}

\newtheorem{lemma}[theorem]{Lemma}
\newtheorem{observation}[theorem]{Observation}
\newtheorem{proposition}[theorem]{Proposition}

\crefname{conjecture}{Conjecture}{Conjectures}
\crefname{observation}{Observation}{Observations}
\crefname{appendixsection}{Appendix}{Appendices}

\title[Revisiting Separators in String Graphs]{Revisiting Separators in String Graphs, Region Intersection Graphs, and Induced-Minor-Free Graphs}

\author{James Davies}
\address{Leipzig University, Germany}
\email{jgdavies@uwaterloo.ca}
\author{Paweł Rzążewski}
\address{Warsaw University of Technology, Poland}
\email{pawel.rzazewski@pw.edu.pl}
\author{Jack Spalding-Jamieson}
\address{Independent}
\email{jacksj@uwaterloo.ca}
\author{Bartosz Walczak}
\address{Department of Theoretical Computer Science, Faculty of Mathematics and Information Science, Jagiellonian University, Kraków, Poland}
\email{bartosz.walczak@uj.edu.pl}

\thanks{James Davies was supported by the Alexander von Humboldt Foundation in the framework of the Alexander von Humboldt Professorship of Daniel Kráľ endowed by the Federal Ministry of Education and Research.
Paweł Rzążewski was supported by the National Science Centre grant 2024/54/E/ST6/00094.
Bartosz Walczak was partially supported by the National Science Centre grant 2019/34/E/ST6/00443.}

\newcommand{\camerareadyonly}[1]{}

\begin{document}

\begin{abstract}
String graphs are intersection graphs of continuous curves in the plane.
In 2010, Fox and Pach proved that string graphs with $m$ edges admit balanced vertex separators of size $\mathcal{O}(m^{3/4}\sqrt{\log m})$ and conjectured that this bound can be improved to $\mathcal{O}(\sqrt{m})$, which would be optimal.
After partial progress, Lee claimed in 2017 to prove this conjecture, and more generally to prove analogous separator theorems for region intersection graphs over minor-closed graph families.
A key intermediate step in Lee's approach was a padded decomposition scheme for region intersection graphs.
In 2024, Korhonen and Lokshtanov extended Lee's approach to the broader setting of graph classes excluding a fixed graph as an induced minor.
These results have since found applications in a wide range of works.

Later in 2024, a flaw was discovered in the construction of Lee's padded decomposition scheme, invalidating Lee's as well as Korhonen and Lokshtanov's separator results, and many works relying on them.

In this work, we prove that string graphs with $m$ edges admit balanced separators of size $\mathcal{O}(\sqrt{m})$, and more generally obtain analogous $\mathcal{O}(\sqrt{m})$ bounds for region intersection graphs over fixed minor-closed graph families and for graphs excluding a fixed graph as an induced minor, thereby repairing the vast majority of the affected works.
Furthermore, our balanced separators can be found in polynomial time.

Our proof uses a different approach from Lee's and does not rely on padded decompositions.
Since padded decompositions have also found applications independent of the separator theorems, particularly for string graphs, we also prove a padded decomposition result for region intersection graphs that is optimal for string graphs.

We have formalized both the balanced separator and the padded decomposition results in Lean~4 with mathlib.

\end{abstract}

\maketitle

\section{Introduction}

In 1979, Lipton and Tarjan~\cite{lipton1979separator} showed that every $n$-vertex planar graph admits a \emph{balanced separator}~-- a set whose removal breaks the graph into small pieces~-- of size $\Oh(\sqrt{n})$.
This bound is optimal (up to constant factors) since the $\sqrt{n}\times\sqrt{n}$ grid is planar and does not admit a balanced separator of size $o(\sqrt{n})$.

Since the seminal work of Lipton and Tarjan, balanced separators have been a central object of study in graph theory and theoretical computer science, with applications in many different areas, most notably algorithm design~\cite{lipton1980applications,baker1994approximation,robertson1995disjoint,demaine2005subexponential,demaine2008bidimensionality,klein2013recursive}.
In particular, sublinear balanced separators have been shown to exist for various graph classes.
For example, graphs of genus $g$ admit balanced separators of size $\Oh(\sqrt{gn})$~\cite{gilbert1984genus} and graphs that exclude $K_h$ as a minor admit balanced separators of size $\Oh\bigl(\min\bigl\{h\cdot\sqrt{n}+f(h),\:h\sqrt{\log h}\cdot\sqrt{n}\bigr\}\bigr)$, for a fast-growing function $f$~\cite{alon1990separator,kawarabayashi2010separator,le2026separator}.
Note that the latter result also applies to all classes excluding an $h$-vertex graph as a minor.

Another way of generalizing the separator result is to look at classes of graphs defined by intersecting geometric objects.
For a family $\calS$ of objects, typically compact and connected subsets of $\RR^2$, their \emph{intersection graph} has a vertex for every object in $\calS$, and an edge exists if and only if the corresponding objects intersect.
For example, Miller, Teng, Thurston, and Vavasis~\cite{miller1997sphere} showed that disk graphs (i.e., intersection graphs of disks in the plane) admit balanced separators of size $\Oh(\sqrt{\omega n})$, where $\omega$ is the maximum size of a clique in the graph.
We remark that the dependence on $\omega$ cannot be dropped, since the complete graph $K_n$ is a unit disk graph, and does not admit a balanced separator of size $o(n)$.
The above result of Miller et al.\ was later extended to more general intersection graphs of fat, similarly-sized objects~\cite{smith1998geometric}, and to sparse intersection graphs of related objects~\cite{fox2026separators}.

What about intersection graphs of arbitrary connected subsets of the plane, also known as \emph{string graphs}?
Here we cannot hope for a balanced separator of size $o(n)\cdot f(\omega)$, for any function $f$, since the complete bipartite graph $K_{n,n}$ is a string graph and has no sublinear separators.
However, in 2010, Fox and Pach~\cite{fox2010separator} proved that string graphs with $m$ \emph{edges} admit balanced separators of size $\Oh(m^{3/4}\sqrt{\log m})$.
They also conjectured that this bound can be improved to $\Oh(\sqrt{m})$.

\begin{conjecture}[Fox and Pach~\cite{fox2010separator}]
\label{conj:stringsep}
Every\/ $m$-edge string graph admits a balanced separator of size\/ $\Oh(\sqrt{m})$.
\end{conjecture}

Note that this would be optimal since every planar graph is a string graph and its number of edges is linear in the number of vertices.
In 2013, Matoušek~\cite{matousek2014near} improved the separator size to $\Oh(\sqrt{m}\log m)$, almost closing the gap, and in 2017, Lee~\cite{lee2017separators} claimed to finally prove \cref{conj:stringsep}.
Actually, Lee did not work with string graphs, but with more general \emph{region intersection graphs}.
We say that a graph $G$ is a region intersection graph over a graph $H$ if vertices of $G$ can be mapped to connected subsets of $H$ so that $G$ is the intersection graph of these subsets.
Lee claimed the following result.

\begin{theorem}\label{thm:mainrig}
For every\/ $h\geq 1$ there exists a constant\/ $c_h$ with the following property.
Every\/ $m$-edge region intersection graph over a\/ $K_h$-minor-free graph admits a balanced separator of size at most\/ $c_h\cdot\sqrt{m}$.
\end{theorem}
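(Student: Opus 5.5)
We reduce the statement to a separator theorem in the host graph by vertex weighting. Let $G$ be an $m$-edge region intersection graph over a $K_h$-minor-free graph $H$, realized by connected sets $R_v\subseteq V(H)$ for $v\in V(G)$. We may assume $G$ has no isolated vertices, since isolated vertices can be put into components freely. First we simplify the host. Contract edges of $H$ and delete vertices of $H$ covered by no region; both operations preserve $K_h$-minor-freeness and keep the regions connected and the intersection pattern the same. After this, $H$ has at most $\Oh(m)$ vertices that lie in more than one region, which suggests putting weights on $H$. Give each $x\in V(H)$ the load $\ell(x)=|\{v: x\in R_v\}|$. For any $v$, the vertex $x\in R_v$ meets at least $\ell(x)-1$ neighbours of $v$, so $\sum_x \ell(x)^2$ is of order $m+n$.

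The core of the argument is an ``$\ell$-weighted'' separator in $H$. I would aim for a set $S\subseteq V(H)$ with cost $\sum_{x\in S}\ell(x)\le c_h\sqrt{m}$ such that the components of $H-S$ induce a balanced split of $G$. Then $X=\{v: R_v\cap S\neq\emptyset\}$ has $|X|\le \sum_{x\in S}\ell(x)$. Each $v\notin X$ has $R_v$ inside a single component of $H-S$, and regions lying in different components are disjoint, so $X$ is a balanced separator of $G$. A cost bound of the form $\sqrt{\sum_x \ell(x)^2}$ is the right scale, since for planar $H$ one expects a separator of cost $\Oh(\sqrt{\sum\ell^2})$, in the spirit of weighted Lipton--Tarjan. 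Balance must be measured in $G$. To arrange this, assign each $v$ to one representative vertex of $R_v$ and balance with respect to these masses; regions cut by $S$ are already in $X$.

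The main obstacle is that $K_h$-minor-free separator theorems with vertex costs do not give $\sqrt{\sum \ell^2}$ directly. The approach I would try first is LP duality via Plotkin--Rao--Smith style arguments, or the conformal-metric/spread approach. Concretely, I would show that for every length function $w$ on $V(G)$, some pair distribution has short average distance, using the fact that $K_h$-minor-free graphs have no large ``expanding'' metrics: the induced metric on $G$, routed through $H$, cannot be expander-like beyond $\sqrt{m}$. Then an iterative or Feige-type rounding converts this bound on the fractional separator into an integral balanced separator of size $\Oh_h(\sqrt m)$.

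If that proves too delicate, the fallback is a direct combinatorial route. Take a maximal family of disjoint connected subgraphs, each touching many regions, i.e. a ``model'' argument in the style of Alon--Seymour--Thomas. Either we find $h$ pairwise-touching such subgraphs, contradicting $K_h$-minor-freeness of $H$, or we obtain a small hitting set as the separator. The threshold would be tuned so that each subgraph has load at least $\sqrt m$, which charges the separator cost against $m$ and gives $c_h\sqrt m$. I expect tuning this charging, so that high-load host vertices are not overcounted, to be the hardest step.
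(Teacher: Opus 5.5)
\textbf{There is a genuine gap.} Your reduction to a cost-weighted separator in the host fails at its first quantitative step.

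The bound $\sum_x\ell(x)^2=\Oh(m+n)$ does not follow from the fact that each $x\in R_v$ meets $\ell(x)-1$ neighbours of $v$. That fact only bounds each individual load by a degree. In fact $\sum_x\binom{\ell(x)}{2}=\sum_{uv\in E(G)}|R_u\cap R_v|$, and nothing bounds how many host vertices witness a single edge of $G$. (Also, contracting a host edge whose endpoints have incomparable load sets can create new intersections.) No simplification of the host can fix this in general. A planar host with every vertex covered and $\sum_x\ell(x)^2=\Oh(m+n)$ has polynomially many vertices. Tracing a spanning tree of each region in a drawing of the host then gives a string representation with polynomially many crossings. This contradicts the theorem of Kratochv\'il and Matou\v{s}ek that some string graphs require exponentially many crossings.

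Moreover, $\sum_{x\in S}\ell(x)$ is the wrong cost anyway. What you actually pay is the number of distinct regions meeting $S$, which is not additive over host vertices: a host separator may have to cut the same few long regions many times. So even an optimal weighted Lipton--Tarjan or Alon--Seymour--Thomas theorem in $H$ gives nothing of order $\sqrt m$. This is why the paper never works in the host. It derives \cref{thm:mainrig} from \cref{thm:mainim}, using only that $G$ excludes $\Ksubdiv{h}$ as an induced minor.

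Your fallbacks state the goal rather than an argument. In the spread framework you allude to, two separate ingredients are needed:
\begin{enumerate}
\item a lower bound $Q(G)=\Omega\bigl(n^2/(h\sqrt m)\bigr)$ on the extremal shortest-path spread, which the paper takes from Korhonen and Lokshtanov (\cref{prop:kl-bound});
\item a non-expansive embedding of \emph{every} vertex-weighted metric $d_w^G$ into the line, with average distortion bounded by a function of $h$ alone.
\end{enumerate}
Generic rounding of the fractional relaxation (Feige--Hajiaghayi--Lee style, or an $\Oh(\log n)$-distortion embedding) loses a factor growing with $n$. It only recovers Matou\v{s}ek's $\Oh(\sqrt m\log m)$. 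The $h$-only distortion bound is exactly the step where Lee's proof failed.

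The paper supplies it through \cref{thm:pure-pairs-subdiv}. Let $\Delta$ be the average distance and $\delta=\Delta/33$. Then one of three things happens:
\begin{enumerate}
\item Some ball $B(x,11\delta)$ contains more than $n/(32h^2)$ vertices. Then $v\mapsto d(v,B(x,11\delta))$ has distortion $\Oh(h^2)$ by \cref{lemma:large-core}.
\item Two sets $A,B$ of more than $\frac{13}{40h}n$ vertices each are at distance more than $\delta$. Then $v\mapsto d(v,A)$ has distortion $\Oh(h^2)$ by \cref{lemma:well-spread}.
\item There is a $\delta$-fat model of $K_h$ in $G$ minus the vertices of weight above $\delta$, which yields an induced $\Ksubdiv{h}$ minor.
\end{enumerate}

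Your Alon--Seymour--Thomas fallback leaves the charging open, by your own account. It also hits the obstacle above: $G$ may contain large cliques and bicliques, and disjoint but adjacent vertex sets of $G$ do not give disjoint subgraphs of $H$. Any such argument must produce an \emph{induced} minor, which is essentially what \cref{thm:pure-pairs-subdiv} does in metric form.
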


Lee's argument yields $c_h=\Oh(h^3\sqrt{\log h})$.
As string graphs are exactly region intersection graphs over planar graphs~\cite{kratochvil1986string}, which are $K_5$-minor-free, \cref{thm:mainrig} immediately implies \cref{conj:stringsep}.

In 2024, Korhonen and Lokshtanov extended \cref{thm:mainrig} to the even broader setting of \emph{induced-minor-free} graph classes~\cite{korhonen2024induced}.
We say that a graph $H$ is an induced minor of a graph $G$ if $H$ can be obtained from $G$ by a sequence of vertex deletions and edge contractions.
A graph is \emph{$H$-induced-minor-free} if it does not contain $H$ as an induced minor.
Let $\Ksubdiv{h}$ denote the $1$-subdivision of $K_h$, bisecting every edge exactly once.
By directly using some intermediate results of Lee as a black box, Korhonen and Lokshtanov claimed a form of the following result.

\begin{theorem}
\label{thm:mainim}
Let\/ $G=(V,E)$ be an\/ $m$-edge graph excluding\/ $\Ksubdiv{h}$ as an induced minor.
Then\/ $G$ admits a balanced separator of size
\[\Oh\left(\min\bigl(\log m,h^2\bigr)\cdot h\cdot\sqrt{m}\right){.}\]
Furthermore, there is a deterministic polynomial-time algorithm that, given a graph\/ $G$, either obtains such a balanced separator or concludes that\/ $G$ contains\/ $\Ksubdiv{h}$ as an induced minor.
In the latter case, there is a second Las Vegas randomized polynomial-time algorithm that produces an induced minor model of\/ $\Ksubdiv{h}$ in\/ $G$.
\end{theorem}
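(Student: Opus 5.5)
We plan to avoid padded decompositions and go through the duality between balanced separators and fractional vertex-weighted distance structures, in the spirit of Matoušek and Feige--Hajiaghayi--Lee. We restrict to the case where $G$ has maximum degree bounded in terms of $m$; this is harmless, since vertices of degree above $\sqrt m$ number at most $2\sqrt m$ and can be placed in the separator. Fix vertex weights $\mu$ (the balance measure). By LP duality, to get a balanced separator of size $s$ it suffices to show the following: for every probability distribution on pairs of vertices, a ``uniform multicommodity flow'' type demand, there is a vertex-length function $\ell\colon V\to\RR_{\ge0}$ with $\sum_v \ell(v)\le s$ whose induced shortest-path metric spreads the demand, meaning that the average $\ell$-distance between demanded pairs is $\Omega(1)$.

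Concretely, we would aim for the following chain. First, reduce to the statement that $G$ cannot contain a ``well-spread'' family of paths. Suppose there is no separator of size $c\sqrt m$. By the standard separator/flow duality (Menger plus LP duality, as used by Matoušek and by Fox--Pach via the crossing-number/bisection-width argument), there is a concurrent vertex flow: a family of paths between $\Omega(n')$ pairs in a large subgraph, with each vertex $v$ carrying load roughly proportional to $\deg(v)\cdot m^{-1/2}\cdot(\text{total})$. In other words, the congestion at $v$ is bounded by $\deg(v)+1$ times a factor $\sqrt m/c$. Second, convert this flow into an induced minor model of $\Ksubdiv{h}$ once $c$ exceeds the bound in the statement. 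The natural tool is a Kővári--Sós--Turán/cut-matching style argument. We would sample $h$ random ``branch'' paths from the flow. Each branch set is taken as a connected subgraph formed by a small ball around a random terminal. Subdivision vertices are obtained from flow paths connecting distinct balls, and they must avoid the other balls. Low congestion ensures that, with positive probability, the chosen paths are pairwise far apart except where intended. The induced-ness requirement (no extra edges between branch sets) is enforced by taking paths at $\ell$-distance greater than $2$ from one another. The $h^3$ versus $h\log m$ tradeoff in the bound arises from two alternative ways of choosing ball radii. One option is a region-growing argument with $\log m$ scales, which gives $h\sqrt m\log m$. The other is a direct argument that loses $h^2$ because there are $\binom h2$ connections.

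The main obstacle is the second step, specifically guaranteeing inducedness. Unlike minors, an induced minor model forbids any edge between nonadjacent branch sets, and flow paths in a graph such as $K_{n,n}$ are heavily interconnected. Here the edge count enters: the flow congestion is measured against degrees, so a vertex with many neighbors already carries proportionally many paths, and the sum $\sum_v\deg(v)=2m$ is what yields $\sqrt m$. I would first attempt a ``neighborhood-thickened'' metric, where the length of a path includes the degrees of its vertices. Under this metric, being at distance at least $2$ rules out edges between branch sets. Balls are then grown in this metric with the Leighton--Rao/Seymour region-growing lemma, which costs the $\log m$ factor.

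For the algorithmic part, the LP is solved explicitly, and region growing is deterministic. Whenever ball growing fails to produce the separator, the failure certifies a dense flow. The Las Vegas algorithm then repeats the random sampling of the second step until it produces a valid model of $\Ksubdiv{h}$, checking validity in polynomial time.
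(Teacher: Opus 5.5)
There is a genuine gap, and it is in your first step, not the one you flag.

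\textbf{The flow-to-separator duality.} LP duality relates concurrent vertex flows only to \emph{fractional} balanced separators, i.e.\ spreading vertex weights. A cheap spreading weighting follows from a small balanced separator, but does not imply one. For example, in a bounded-degree $n$-vertex expander, $\ell\equiv C/\log n$ spreads the uniform demand at total cost $\Oh(n/\log n)$, yet every balanced separator has size $\Omega(n)$. So your claim ``no separator of size $c\sqrt m$ gives a flow of congestion about $\sqrt m/c$'' does not follow from Menger plus LP duality. It holds only after paying the fractional-to-integral rounding loss, which is $\Theta(\log n)$ for general graphs.

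Showing that this loss is $\Oh(h^2)$, independent of $n$, for $\Ksubdiv{h}$-induced-minor-free graphs is the whole difficulty of the theorem. It is also exactly where Lee's padded-decomposition argument failed. Your ``direct argument that loses $h^2$ because there are $\binom h2$ connections'' gives no mechanism for it. An $\Oh(\log n)$ rounding (region growing, or a Bourgain-type embedding) only recovers the logarithmic branch, essentially a Matou\v{s}ek-type $\Oh(h\sqrt m\log m)$ bound.

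Conversely, the step you call the main obstacle is already solved. Inducedness when turning a low-congestion uniform flow into a model of $\Ksubdiv{h}$ is Korhonen and Lokshtanov's lemma (\cref{prop:kl-bound}). The paper uses it as a black box, and it is where $\sqrt m$ and one factor of $h$ come from. Your degree truncation and ``neighbourhood-thickened'' metric are not needed.

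\textbf{What is missing.} You need an $n$-independent rounding. For every $w$, the metric $d_w^G$ should embed non-expansively into the line with average distortion $\Oh(h^2)$ (\cref{thm:embed}). Combining this with \cref{prop:sp-to-embed}, the lower bound on $Q$ from \cref{prop:kl-bound}, and threshold rounding with iterative peeling (\cref{prop:embed-to-sep}) yields the separator.

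The paper obtains the embedding from a pure-pairs trichotomy (\cref{thm:pure-pairs-subdiv}). Let $\Delta$ be the average distance, $\delta=\Delta/33$, $\epsilon\approx 1/h$ and $\eta\approx 1/h^2$. Then one of the following holds.
\begin{enumerate}
\item Some ball of radius $11\delta$ holds an $\eta$-fraction of the vertices. Then the distance to that ball has average distortion $\Oh(1/\eta)$ (\cref{lemma:large-core}).
\item Two $\epsilon$-fractions $A,B$ are more than $\delta$ apart. Then $x\mapsto d(x,A)$ has average distortion $\Oh(1/\epsilon^2)$ (\cref{lemma:well-spread}).
\item After discarding vertices of weight above $\delta$, so that every remaining edge is short, one builds a $\delta$-fat model of $K_h$. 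Take $h$ far-apart $\delta$-connected branch sets such that most of the mass lies within $5\delta$ of each of them but farther than $4\delta$ from all of them. Then pick $\binom h2$ pairwise distant points in that region and join each to its two branch sets by shortest paths. This is an induced minor model of $\Ksubdiv{h}$.
\end{enumerate}
So the $\binom h2$ count does drive the $h^2$ (through $1/\eta$, together with $1/\epsilon^2$ from the $h$ branch sets). But it enters in the rounding step, not in the flow-to-minor step where you placed it.
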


We note that if $G$ is a region intersection graph over a $K_h$-minor-free graph, then $G$ is $\Ksubdiv{h}$-induced-minor-free.
On the other hand, for every $h$, there is a $\smash[t]{K_6^{(1)}}$-induced-minor-free graph that is not a region intersection graph over any $K_h$-minor-free graph~\cite{bonnet2025approximation}.
Thus, \cref{thm:mainim} is a strict generalization of \cref{thm:mainrig}.

Over the years, many different papers have built on these results, as well as the different intermediate results~\cite{bonnet2019optimality,okrasa2020subexponential,novotna2021subexponential,kumar2021constant,kumar2022point,lokshtanov2024approximation,bonnet2023treewidth,berthe2025pushing,spaldingjamieson2025reweighted}.
Unfortunately, in 2024, a significant error was discovered in a core component of Lee's original proof of \cref{thm:mainrig}~\cite{bonnet2025approximation}, invalidating or weakening key results in the follow-up work.
(Following~\cite{bonnet2025approximation}, the inequality at the sixth line of the proof of Lemma~4.14 of the full version of Lee's paper~\cite{lee2017separators} need not hold.)

In this paper, we present a new proof of \cref{thm:mainim} that does not rely on the flawed component of Lee's proof.
As previously noted, our result implies \cref{thm:mainrig} and thus proves \cref{conj:stringsep}.
As a consequence, we also fix the majority of papers that were broken by the invalidation of Lee's result.

Our proof uses two standard ideas via a multicommodity flow framework shared by much of the previous work~\cite{matousek2014near,lee2017separators,korhonen2024induced}.
First, we apply a multicommodity flow bound proven by Korhonen and Lokshtanov~\cite{korhonen2024induced} to show that the graph admits a multicommodity flow of value $\Omega(\sqrt{m})$, whose dual form gives us a vertex-weighted metric over the graph.
Second, we construct a metric embedding into the line.
It is in the details of this second idea that our techniques diverge somewhat from Lee's methods, so the intermediate results we obtain are not all identical.
In \cref{sec:conclusion} we list results relying on the work of Lee and discuss whether and how they are fixed by our results.

\subsection*{Padded Decompositions and Metric Embeddings}

As mentioned, a key step in the original claimed proofs of both \cref{thm:mainrig} and \cref{thm:mainim} is to use a lemma of Lee's that gives a certain kind of metric embedding into the line.
To this aim, Lee made use of so-called padded decompositions for vertex-weighted shortest-path metrics.
Padded decompositions have seen a number of (effectively equivalent) definitions.
We use the one by Conroy and Filtser~\cite{conroy2025protect}.

Throughout this paper, a \defn{metric} on a set $X$ is a function $d\colon X\times X\to\RR_{\geq 0}$ such that $d(x,x)=0$, $d(x,y)=d(y,x)$, and $d(x,z)\leq d(x,y)+d(y,z)$; so we allow a metric to assign distance zero to two distinct points.%
\footnote{Such a function is often called a \emph{pseudometric} or \emph{semimetric}, but we prefer the simpler term \emph{metric}.}
A ball in a metric space $(X,d)$ is denoted by $\defn{B_d(x,r)}:=\{y\in X:d(x,y)\leq r\}$.
For a metric space $(X,d)$, a partition $\calP$ of $X$ is \defn{$\Delta$-bounded} if every part $P\in\calP$ has diameter at most $\Delta$, i.e., $\max_{x,y\in P}d(x,y)\leq\Delta$.
For a partition $\calP$ of $X$ and a point $x\in X$, let \defn{$\calP(x)$} denote the part containing $x$.
A \defn{$(\beta,\delta,\Delta)$-padded decomposition} for $(X,d)$ is a probability distribution over the $\Delta$-bounded partitions of $X$ such that for every $x\in X$ and every $\gamma\in[0,\delta]$, it holds that $\Pr[B_d(x,\gamma\Delta)\subseteq\calP(x)]\geq e^{-\beta\gamma}$.
The parameter $\beta$ is referred to as its \defn{padding}, and the parameter $\delta$ is referred to as its probability.
A \defn{$(\beta,\delta)$-padded decomposition scheme} for $(X,d)$ is then a family of $(\beta,\delta,\Delta)$-padded decompositions for all values of $\Delta>0$.

Throughout this paper, we implicitly assume that the graph denoted by $G$ is always connected\footnote{This assumption does not affect the statements of our results on the existence of balanced separators.}, which allows us to speak of (finite) shortest-path distances in $G$.
For a graph $G=(V,E)$ and an assignment of vertex weights $w\colon V\to\RR_{\geq 0}$, we define two vertex-weighted shortest-path metrics.
The \defn{endpoint-light vertex-weighted shortest-path metric $d_w^G$} assigns to every path $u_1u_2\cdots u_k$ ($k\geq 2$) length $\frac{1}{2}w(u_1)+w(u_2)+\dots+w(u_{k-1})+\frac{1}{2}w(u_k)$; equivalently, it is the edge-weighted shortest-path metric over $G$ with each edge $uv$ given weight $\frac{1}{2}(w(u)+w(v))$.
The \defn{endpoint-heavy vertex-weighted shortest-path metric $\dbar_w^G$} assigns to every path $u_1u_2\cdots u_k$ ($k\geq 2$) length $w(u_1)+w(u_2)+\dots+w(u_{k-1})+w(u_k)$.
(Both metrics assign length $0$ to paths with no edges.)
We have defined the two metrics for clarity, since prior works use the two variations inconsistently.
We will primarily work with the endpoint-light variation for our new results, but we will apply some previous results that are stated for the endpoint-heavy variation.%
\footnote{There is a so-called $2$-bi-Lipschitz embedding between the two types of metrics, so they are often interchangeable up to a constant factor.}

Lee~\cite[Theorem~2.10]{lee2017separators} claimed the following.
\begin{itemize}
\item[($\star$)] If $G$ is a region intersection graph over a graph $G_0$ that excludes $K_h$ as a minor, then for every assignment of vertex weights $w\colon V\to\RR_{\geq 0}$, the endpoint-light vertex-weighted shortest-path metric $d_w^G$ admits an $\bigl(\Oh(h^2),\Omega(1)\bigr)$-padded decomposition scheme.
\end{itemize}

This particular claim has seen many applications beside \cref{thm:mainrig}.
Unfortunately, this component of Lee's original proof relies on a statement that was incorrect, and a fix for this component remains to be found.
(We will take a different approach.)

The subsequent papers that have applied this component of Lee's can be partitioned into three groups (see also the discussion in \cref{sec:conclusion}):
\begin{enumerate}
\item those using this result only for string graphs;
\item those using this result for its implications in terms of metric embeddings (discussed shortly);
\item those requiring this result in its full generality.
\end{enumerate}
The majority of subsequent papers fall into the first two categories, and we will provide sufficient results for both of those categories.

For the first category, we will prove the following result, which has a worse dependence on $h$.

\begin{savethm}{theorem}{thm:pad-rig}
Let\/ $G=(V,E)$ be a region intersection graph over a\/ $K_h$-minor-free graph\/ $G_0$, and let\/ $w\colon V\to\RR_{\geq 0}$ be an assignment of vertex weights.
Then the endpoint-light and endpoint-heavy vertex-weighted shortest-path metrics\/ $d_w^G$ and\/ $\dbar_w^G$ each admit\/ $\bigl(\Oh(h^2),\Omega(1/h)\bigr)$-padded decomposition schemes.
In particular, every string graph admits an\/ $(\Oh(1),\Omega(1))$-padded decomposition scheme.
\end{savethm}

For the second category, we must first describe the special kinds of metric embeddings that are useful for separators.
In fact, this will be critical for our proof of \cref{thm:mainim}.
Fix a finite metric space $(X,d)$.
An embedding $f\colon X\to\RR$ of $(X,d)$ into the line is said to be \defn{non-expansive} if $|f(x)-f(y)|\leq d(x,y)$ for all $x,y\in X$.
The \defn{average distortion} of a non-expansive embedding is
\[\frac{\sum_{x,y\in X}d(x,y)}{\sum_{x,y\in X}|f(x)-f(y)|}{.}\]

We call non-expansive embeddings into the line with bounded average distortion \defn{Rabinovich embeddings}, since Rabinovich~\cite{rabinovich2008average} was the first to study them directly.
It should be noted that every finite metric space with $n$ points admits a Rabinovich embedding with (average) distortion $\Oh(\log n)$~\cite[Theorem~3.1]{feige2008improved}.

As a corollary of ($\star$), Lee (implicitly) claimed the following result on Rabinovich embeddings.

\begin{theorem}
\label{thm:old-rig-rabinovich}
Let\/ $G=(V,E)$ be a region intersection graph over a\/ $K_h$-minor-free graph\/ $G_0$, and let\/ $w\colon V\to\RR_{\geq 0}$ be an assignment of vertex weights.
Then the endpoint-light vertex-weighted shortest-path metric\/ $d_w^G$ admits a non-expansive embedding into the line with average distortion at most\/ $\Oh(h^2)$.
\end{theorem}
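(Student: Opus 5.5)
The plan is to derive the statement from \cref{thm:pad-rig} via the standard reduction from padded decompositions to Rabinovich embeddings. \Cref{thm:pad-rig} gives an $\bigl(\beta,\delta\bigr)$-padded decomposition scheme for $d:=d_w^G$ with $\beta=\Oh(h^2)$ and $\delta=\Omega(1/h)$. The goal is average distortion $\Oh(\beta)$, not $\Oh(\beta/\delta)$, so the padding radius must be chosen carefully.

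\textbf{Setup.} Let $n=|V|$ and let $A:=\frac{1}{n^2}\sum_{x,y}d(x,y)$ be the average distance. Set $R:=A/8$. Every map of the form $f(x)=d(x,S)$ for a nonempty $S\subseteq V$ is $1$-Lipschitz, hence non-expansive. So it suffices to find such an $S$, or a distribution over such $S$, with $\sum_{x,y}|f(x)-f(y)|=\Omega(n^2A/\beta)$ (in expectation in the second case). The argument then splits into two cases.

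\textbf{Case 1: some ball $B:=B_d(z,R)$ contains at least $n/2$ points.} Take $f(x)=d(x,B)\geq d(x,z)-R$. Restricting to pairs with $y\in B$ gives
\[\sum_{x,y}|f(x)-f(y)|\geq \tfrac{n}{2}\sum_x\bigl(d(x,z)-R\bigr).\]
The triangle inequality gives $n^2A\leq 2n\sum_x d(x,z)$, so $\sum_x d(x,z)\geq nA/2=4nR$. Hence $\sum_x(d(x,z)-R)\geq\frac34\sum_x d(x,z)$. This yields $\sum_{x,y}|f(x)-f(y)|\geq \frac{3}{16}\,n^2A$, i.e.\ constant average distortion.

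\textbf{Case 2: every ball of radius $R$ contains fewer than $n/2$ points.} Then every $x$ has more than $n/2$ points $y$ with $d(x,y)>R$.
\begin{itemize}
\item Sample $\calP$ from the $(\beta,\delta,\Delta)$-padded decomposition with $\Delta:=R$.
\item Independently put each part of $\calP$ into a set $S$ with probability $1/2$, and let $f(x)=d(x,S)$. (If $S$ happens to be empty, set $f\equiv 0$.)
\item Fix a pair $x,y$ with $d(x,y)>R$. Since $\calP$ is $R$-bounded, $x$ and $y$ lie in different parts.
\item With probability $1/4$, the part $\calP(y)$ is in $S$ and $\calP(x)$ is not. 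Independently of this choice, with probability at least $e^{-\beta\gamma}$ the ball $B_d(x,\gamma R)$ lies inside $\calP(x)$.
\item Take $\gamma:=1/\beta$. This is admissible since $1/\beta=\Oh(1/h^2)\leq\delta=\Omega(1/h)$, after adjusting constants.
\item When both events occur, $f(y)=0$ and $f(x)\geq R/\beta$.
\end{itemize}
Hence $\mathbb{E}|f(x)-f(y)|\geq \frac{1}{4e}\cdot\frac{R}{\beta}$ for each such pair. There are at least $n^2/2$ such ordered pairs, so $\mathbb{E}\sum_{x,y}|f(x)-f(y)|\geq \frac{n^2R}{8e\beta}=\Omega(n^2A/\beta)$. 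Some outcome achieves at least this expectation, giving average distortion $\Oh(\beta)=\Oh(h^2)$.

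\textbf{Main obstacle.} The delicate point is reconciling the two parameters of \cref{thm:pad-rig}. The padding radius must be $\gamma=1/\beta$, so that the success probability $e^{-\beta\gamma}$ is a constant while the gained distance is $\Delta/\beta$. This requires $1/\beta\leq\delta$, which holds precisely because $\beta=\Theta(h^2)$ dominates $1/\delta=\Oh(h)$. Using $\gamma=\delta$ instead would lose a factor and give only $\Oh(h^3)$. The rest is the standard ball-versus-spread dichotomy above. Note that the zero distances allowed in the pseudometric cause no issues, since all arguments only use the triangle inequality.
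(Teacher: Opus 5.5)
Your argument is correct, but it takes a different route from the paper.

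\textbf{The paper's route.} The paper gets this statement as a special case of \cref{thm:embed}, using the fact that a region intersection graph over a $K_h$-minor-free graph excludes $\Ksubdiv{h}$ as an induced minor. \Cref{thm:embed} in turn is proved by applying the fat-minor pure-pairs theorem (\cref{thm:pure-pairs-subdiv-poly} with $H=K_h$, $\epsilon=\Theta(1/h)$, $\eta=\Theta(1/h^2)$, $\delta=\Delta/33$). Its two non-trivial outcomes are fed into \cref{lemma:large-core} or \cref{lemma:well-spread}, each of which uses a single deterministic map $f_S(x)=d(x,S)$.

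\textbf{Your route.} You go through \cref{thm:pad-rig} and the standard Rabinovich-style argument: a random $\Delta$-bounded partition, a random half of the parts, and padding at scale $\Delta/\beta$. This is essentially how Lee derived his Rabinovich consequence from ($\star$). Your key check, $\gamma=1/\beta\leq\delta$, is right and in fact holds structurally. \Cref{prop:sparse-to-pad} outputs parameters $(8\beta'(1+\ln s),\frac{1}{4\beta'})$, so $1/\beta\leq\delta$ always. Hence the weaker probability $\Omega(1/h)$ in \cref{thm:pad-rig} costs nothing here.

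Your closing aside about $\gamma=\delta$ is off, though it does not affect the proof. That choice would give success probability $e^{-\beta\delta}=e^{-\Theta(h)}$, not an $\Oh(h^3)$ bound.

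\textbf{What each approach buys.} Yours shows that the repaired padded decompositions of \cref{thm:pad-rig} already recover Lee's Rabinovich consequence, which the paper does not point out. The cost is a dependence on the external asymptotic-dimension bound (\cref{prop:rig-control}) and the Conroy--Filtser conversion. It also gives a worse constant: $64e\beta$ with $\beta\sim 192\ln 2\cdot h^2$, versus the paper's $313h^2$. Finally, your construction is purely existential and is confined to region intersection graphs.

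The paper's argument is self-contained and elementary, and it yields a deterministic polynomial-time algorithm. It applies to all $\Ksubdiv{h}$-induced-minor-free graphs, for which no padded decomposition is known. It is also the argument that sharpens to $\Oh(\log h)$ in \cref{app:improved-bound}.
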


We will prove the following result, that in particular implies \cref{thm:old-rig-rabinovich} as a special case.

\begin{savethm}{theorem}{thm:embed}
Let\/ $G=(V,E)$ be a graph excluding\/ $\Ksubdiv{h}$ as an induced minor, and let\/ $w\colon V\to\RR_{\geq 0}$ be an assignment of vertex weights.
Then the metrics\/ $d_w^G$ and\/ $\dbar_w^G$ admit non-expansive embeddings into the line with average distortion
\[\min\left\{\Oh(\log{|V|}),\:625h^2\right\}{.}\]

Furthermore, there is a polynomial-time in\/ $|V|$ algorithm that, given a graph $G$ and an assignment of vertex weights\/ $w\colon V\to\RR_{\geq 0}$, obtains either an induced minor model of\/ $\Ksubdiv{h}$ in\/ $G$, or such a non-expansive embedding into the line.
\end{savethm}

This will be enough to prove \cref{thm:mainim} and, consequently, \cref{thm:mainrig}, as well as fix a number of prior works building on Lee's.

However, we believe that ($\star$) should hold as well, even in the more general setting of forbidden induced minors, and even for a stronger parameter dependence generalizing an analogous result for $K_h$-minor-free graphs~\cite{conroy2025protect}.
To this end, we pose the following conjecture.

\begin{conjecture}
Let\/ $G$ be a\/ $\Ksubdiv{h}$-induced-minor-free graph, and let\/ $w\colon V\to\RR_{\geq 0}$ be an assignment of vertex weights.
Then the endpoint-light vertex-weighted shortest-path metric\/ $d_w^G$ admits an\/ $\bigl(\Oh(\log h),\Omega(1)\bigr)$-padded decomposition scheme.
\end{conjecture}

\subsection*{Improved Parameter Dependence}

In \cref{app:improved-bound}, we improve the dependence on $h$ in \cref{thm:embed}, and consequently in \cref{thm:mainim}, obtaining the following result.

\begin{savethm}{theorem}{thm:improved-main}
Let\/ $G=(V,E)$ be an\/ $m$-edge graph excluding\/ $\Ksubdiv{h}$ as an induced minor.
Then\/ $G$ admits a balanced separator of size
\[\Oh\bigl(h\log h\cdot\sqrt{m}\bigr){.}\]
Furthermore, there is a deterministic polynomial-time algorithm that, given a graph\/ $G$, either obtains such a balanced separator or concludes that\/ $G$ contains\/ $\Ksubdiv{h}$ as an induced minor.
In the latter case, there is a second Las Vegas randomized polynomial-time algorithm that produces an induced minor model of\/ $\Ksubdiv{h}$ in\/ $G$.
\end{savethm}

\subsection*{Other Related Works}

In late 2025, Davies~\cite{davies2025string} and, independently, Chang, Conroy, Tan, and Zheng~\cite{chang2026cutting} proved that every unweighted string graph admits an embedding into an edge-weighted planar graph that preserves all pairwise shortest-path distances up to a universal constant factor.
We emphasize that these results do not resolve \cref{conj:stringsep}: they apply to unweighted shortest-path metrics, whereas Lee's approach requires padded decompositions for vertex-weighted shortest-path metrics.
In fact, vertex-weighted string graphs do not admit embeddings into planar graphs with universally bounded distortion, as we show in \cref{app:weighted-planar}.

\subsection*{Formalization}

To avoid further errors, we have formalized the existential statements in \cref{thm:mainim}, \cref{thm:pad-rig}, and \cref{thm:improved-main} in Lean~4 with mathlib.
Our formalization of these first two results was almost entirely constructed using the proprietary LLM-based tool Aristotle~\cite{achim2025aristotle} (with some small manual edits, and other small edits using a coding agent), although we have carefully separated the definitions and main theorem statements from the proofs, and carefully hand-verified them so that we can be confident in the result.
The improved bound was formalized using GPT 5.6 Sol.
We note that the formalization differs very slightly in some of its proof details compared to what we present in this paper, and also in its constants.
However, all asymptotic dependencies on $n$, $m$, and $h$ are the same.
Our formalization is publicly available.%
\footnote{\url{https://github.com/jacketsj/formalized-induced-minor-free-separators}}

\section{From Non-Expansive Embeddings to Balanced Separators}
\label{sec:standard-relax-embed-sep-results}

We now outline the techniques we will use to prove \cref{thm:mainim} using \cref{thm:embed}.

Similarly to previous works, we follow the pipeline of Matoušek~\cite{matousek2014near} for proving the existence of separators in a graph class, based on two different relaxations of a sparsest vertex-cut problem in the style of Leighton and Rao~\cite{leighton1999multicommodity}.

For a graph $G=(V,E)$, we define its \defn{extremal shortest-path spread} as
\[Q(G) := \max_{w\colon V\to\RR_{\geq 0}}\enspace\sum_{u,v\in V}\dbar_w^G(u,v)\quad\text{subject to}\quad\sum_{v\in V}w(v)\leq 1{.}\]
Note that extremal-shortest-path spread is using endpoint-heavy shortest paths.
Its \defn{extremal line-embedded spread} is
\begin{align*}
\Qbar(G) := \max_{\substack{w\colon V\to\RR_{\geq 0}\\f\colon V\to\RR}}\enspace &\,\,\sum_{\!\!\!u,v\in V\!\!\!}\,\,|f(u)-f(v)| \\
\text{subject to}\enspace &\,\,\sum_{v\in V}\,\,w(v)\leq 1{,}\quad w(u)+w(v)\geq|f(u)-f(v)|\quad\text{for all }uv\in E{.}
\end{align*}

These two quantities are related by the following result.

\begin{savethm}{proposition}{prop:sp-to-embed}
Let\/ $G=(V,E)$ be a graph such that for every assignment of vertex weights\/ $w\colon V\to\RR_{\geq 0}$, the endpoint-heavy vertex-weighted shortest-path metric\/ $\dbar_w^G$ admits a non-expansive embedding into the line with average distortion at most\/ $\alpha$, for some\/ $\alpha>0$.
Then\/ $\alpha\cdot\Qbar(G)\geq Q(G)$.
\end{savethm}

\Cref{prop:sp-to-embed} is a standard result, and has essentially been proven in many different papers (e.g.~\cite{biswal2010eigenvalue}).
For completeness, we include the proof in~\cref{sec:vertexexpansion}, along with a proof of the following standard result.

\begin{savethm}{proposition}{prop:embed-to-sep}
Let\/ $G=(V,E)$ be a graph such that every connected induced subgraph\/ $H$ of\/ $G$ with\/ $n'$ vertices satisfies\/ $\Qbar(H)\geq(n')^2/\beta$, for some value\/ $\beta>0$.
Then\/ $G$ admits a\/ $\frac{2}{3}$-balanced vertex separator of size at most\/ $4\beta$.
\end{savethm}

With \cref{prop:sp-to-embed} and \cref{prop:embed-to-sep}, a bound on the size of the smallest $\frac{2}{3}$-balanced vertex separator for a graph $G$ will follow from two steps:
\begin{enumerate}
\item a lower bound on $Q(G)$;
\item a proof of the existence of a non-expansive embedding with bounded average distortion.
\end{enumerate}
This is how the original claimed proofs of both \cref{thm:mainrig} and \cref{thm:mainim} operate.
In particular, only the second step was erroneous in each paper.
Korhonen and Lokshtanov correctly proved the following result.%
\footnote{In their original paper, Korhonen and Lokshtanov had a constant of $40$ in place of $45$.
This is a very minor and fixable issue stemming from a proof in their paper~\cite[Lemma~4.4]{korhonen2024induced}, and it was discovered during our formalization process.}

\begin{proposition}[{Contrapositive of~\cite[Lemma 4.4]{korhonen2024induced}}]
\label{prop:kl-bound}
Suppose a graph\/ $G$ does not contain another graph\/ $H$ as an induced minor.
Let\/ $m_G$ be the number of edges of\/ $G$.
Let\/ $n_H$ and\/ $m_H$ be the numbers of vertices and edges of\/ $H$, respectively.
Then
\[Q(G) > \frac{n_G^2}{45\cdot\sqrt{n_H+m_H}\sqrt{m_G}}{.}\]
Furthermore, given a graph\/ $G$ that does not satisfy this inequality, then there is a Las Vegas randomized polynomial time algorithm to produce an induced-minor model of\/ $H$ in\/ $G$.
\end{proposition}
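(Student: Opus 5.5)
The plan is to prove the contrapositive of Lemma~4.4 of~\cite{korhonen2024induced}: assume $Q(G)\leq n_G^2/(45\sqrt{n_H+m_H}\sqrt{m_G})$ and construct an induced minor model of $H$. The first step is LP duality. $Q(G)$ is the optimum of a fractional vertex-cut relaxation (maximize total endpoint-heavy spread subject to total weight $1$). Its dual is a uniform multicommodity flow problem with vertex capacities: route one unit of flow between every ordered pair $u,v\in V$ along paths, where the cost of a path counts every vertex on it, endpoints included. The dual value is the minimum possible maximum vertex congestion. So the assumed upper bound on $Q(G)$ gives a flow $\{f_P\}$ on paths in which every vertex carries total flow at most $Q(G)$. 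This flow can be computed in polynomial time by solving the LP.

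The second step converts this all-pairs low-congestion routing into an induced minor by random sampling. Let $k=n_H+m_H$. Sample a random vertex $s_x\in V$ for each vertex $x$ of $H$ and a random "midpoint" vertex for each edge of $H$; this matches the structure of a $1$-subdivision, which suits an induced minor model. For each incidence of $H$ (vertex $x$, edge $e\ni x$), sample a path between the corresponding sampled vertices with probability proportional to its flow value. The branch set of $x$ is the union of $s_x$ with the half-paths towards the edges incident to $x$; each edge's midpoint is attached to one of its endpoints' branch sets. We then need the event that the branch sets are pairwise disjoint and that non-adjacent ones have no $G$-edge between them. In other words, paths belonging to different pieces must neither share a vertex nor be joined by an edge.

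The main obstacle is bounding this failure probability, and this is where $\sqrt{m_G}$ enters. Two independently sampled flow paths "touch" if some vertex of one equals or is adjacent to some vertex of the other. By non-expansion of the congestion bound, the expected number of touching pairs is at most roughly
\[\sum_{uv\in E\text{ or }u=v}\frac{\mathrm{load}(u)\,\mathrm{load}(v)}{n_G^4}\lesssim \frac{(n_G+m_G)\,Q(G)^2}{n_G^4}\]
per pair of pieces, and there are at most $\Oh(k^2)$ relevant pairs. I would first try a Cauchy--Schwarz/AM--GM bound using $\mathrm{load}(u)\mathrm{load}(v)\leq Q(G)^2$, summed over the $m_G$ edges. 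With $Q(G)\leq n_G^2/(45\sqrt{k}\sqrt{m_G})$, the total expected number of conflicts becomes $\Oh(k^2\cdot m_G\cdot n_G^4/(45^2 k\, m_G\, n_G^4))$. This is not yet small enough; it is off by a factor of $k$. To remove that factor I would not sample all paths independently, but instead use the $k$ sampled terminals as in Matoušek's argument: only $k$ pieces are sampled, and each conflict is charged to one pair of incident pieces. This brings the count to $\Oh(k\cdot k^{-1})=\Oh(1/45^2)$ times a fixed constant. Tuning the constant (this is exactly where the $45$ rather than $40$ arises) makes the expectation below $1$, so the model exists. Markov's inequality then shows that a constant fraction of samples succeed, and checking a sample takes polynomial time, which gives the Las Vegas algorithm. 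A final cleanup step ensures connectivity of each branch set and handles the endpoint-heavy convention, namely that endpoints' weights count toward the congestion bound.
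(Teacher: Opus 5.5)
Your LP-duality step is fine. The assumed bound on $Q(G)$ is equivalent to an all-pairs flow (one unit per ordered pair, endpoints counted) with maximum vertex load at most $n_G^2/(45\sqrt{k}\sqrt{m_G})$, computable in polynomial time, and sampling flow paths to build a model of $H$ is the right framework. The paper itself does not reprove this statement: it imports Lemma~4.4 of Korhonen and Lokshtanov, changing $40$ to $45$ because of a small error in their proof found during formalization. So the constant does not come from tuning an argument like yours.

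The genuine gap is the one step that matters: getting $\sqrt{n_H+m_H}$ instead of $n_H+m_H$. In your construction, every path in $X_x$ must not touch any path in $X_y$ when $xy\notin E(H)$, and must be disjoint from it when $xy\in E(H)$ (apart from the two halves of the same edge). That is roughly $\sum_{x\neq y}\deg_H(x)\deg_H(y)$ constraints, which is $\Theta(m_H^2)=\Theta(k^2)$ once $m_H=\Omega(n_H)$, e.g. for $H=K_h^{(1)}$, the case the paper needs. For each such pair, the load bound only gives a touching probability of at most $(n_G+2m_G)Q(G)^2/n_G^4$. Under your hypothesis this is of order $1/k$, and a bound on maximum load cannot give anything better. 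So the expected number of violated constraints is $\Theta(k)$, and a first-moment/Markov argument needs it below $1$. Charging each conflict to a pair of incident pieces does not help: it redistributes the conflicts but does not reduce the number of non-incident pairs that must all be conflict-free. Carried out carefully, independent sampling plus a union bound only gives $Q(G)\geq c\,n_G^2/(m_H\sqrt{m_G})$. For $H=K_h^{(1)}$ this loses a factor of $h$ in the final separator bound.

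What is missing is an argument that tolerates $O(1)$ expected conflicts per piece ($\Theta(k)$ in total) rather than requiring fewer than one conflict overall, for instance by sampling redundantly and then selecting or deleting. This is the non-trivial content of Korhonen and Lokshtanov's lemma, and your proposal does not supply it. Invoking Matoušek does not supply it either: his string-graph argument gets its quartic lower bound from the crossing number of $K_n$, which has no counterpart in your sketch. A secondary point: your branch-set description (``half-paths'', attaching the midpoint to one side) needs an explicit splitting argument. The two paths realising an edge $xy$ share the midpoint and may intersect elsewhere, which would make $X_x$ and $X_y$ overlap.
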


The combination of \cref{prop:kl-bound}, \cref{prop:sp-to-embed}, \cref{prop:embed-to-sep}, and \cref{thm:embed} is enough to prove \cref{thm:mainim}.

\section{Main Structural Result}

Let $(U,d)$ be a metric space.
An \defn{outer measure} on $U$ is a mapping $\mu$ from the subsets of $U$ to non-negative reals such that $\mu(\emptyset)=0$, $X\subseteq Y$ implies $\mu(X)\leq\mu(Y)$, and $\mu(X\cup Y)\leq\mu(X)+\mu(Y)$.
We normally want to take $\mu(X)={|X|}/{|U|}$, but other choices may be reasonable, e.g., $\mu(X)=\chi(G[X])/\chi(G)$ when $U$ is the vertex set of a graph $G$.

The main result of this section is the following, which is a key new ingredient in our proof of \cref{thm:embed}.
(More precisely, the key ingredient is the algorithmic version of \cref{thm:pure-pairs-subdiv}, i.e., \cref{thm:pure-pairs-subdiv-poly} with $\mu(X)={|X|}/{|V|}$.)

\begin{theorem}
\label{thm:pure-pairs-subdiv}
Let\/ $n,m\in\NN$ with\/ $m\geq 1$, $\epsilon,\eta\geq 0$ with\/ $(2n+2)\epsilon+(n+m)\eta<1$, $G=(V,E)$ be a graph with vertex weights\/ $w\colon V\to\RR_{\geq 0}$, $d=d_w^G$ be the endpoint-light vertex-weighted shortest-path metric of\/ $G$, $\mu$ be an outer measure on\/ $V$ with\/ $\mu(V)=1$, $H$ be a graph with\/ $n$ vertices and\/ $m$ edges, and\/ $\delta\geq 0$.
Then at least one of the following holds:
\begin{enumerate}
\item there is\/ $x\in V$ with\/ $\mu(B(x,11\delta))>\eta$;
\item there are\/ $A,B\subseteq V$ with\/ $\mu(A)>\epsilon$, $\mu(B)>\epsilon$, and\/ $d(A,B)>\delta$;
\item $G$ contains a\/ $1$-subdivision of\/ $H$ as an induced minor.
\end{enumerate}
\end{theorem}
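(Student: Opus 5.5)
We assume that neither (1) nor (2) holds, and we build an induced minor model of the $1$-subdivision of $H$. The two failing outcomes give two tools.
\begin{itemize}
\item From $\neg$(1): every ball $B(x,11\delta)$ has $\mu$-measure at most $\eta$. By subadditivity, removing the $11\delta$-neighbourhood of $k$ points costs at most $k\eta$ of measure.
\item From $\neg$(2): for every $A\subseteq V$ with $\mu(A)>\epsilon$, the set $N_\delta(A):=\{v: d(v,A)\leq\delta\}$ satisfies $\mu(V\setminus N_\delta(A))\leq\epsilon$. Otherwise $A$ and $V\setminus N_\delta(A)$ would witness (2).
\end{itemize}
So measure is "spread out" at scale $11\delta$ but "concentrated" at scale $\delta$. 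The hypothesis $(2n+2)\epsilon+(n+m)\eta<1$ suggests the budget: two $\epsilon$-exceptional sets per branch vertex plus two extra, and one $\eta$-ball per branch vertex and per edge of $H$.

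The construction is greedy and processes the $n$ branch vertices and then the $m$ edges of $H$. We maintain a forbidden region $F$, the union of the $11\delta$-balls around the centres of all pieces built so far. Its measure is at most $(\#\text{pieces})\,\eta$ throughout.
\begin{enumerate}
\item \emph{Branch sets.} For each vertex $i$ of $H$, pick a centre $x_i\notin F$. Take as branch set a connected set $C_i$ of small radius around $x_i$, say $C_i=B(x_i,\delta)$ intersected with a suitable component. Then add $B(x_i,11\delta)$ to $F$. Since $d$ is a shortest-path metric, balls are connected.
\item \emph{Edges.} For each edge $ij$, use $\neg$(2) to get two points $a\in N_\delta(C_i)$-side and $b\in N_\delta(C_j)$-side with $d(a,b)\leq\delta$. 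Both lie in a region of measure greater than $1-(2n+2)\epsilon-(n+m)\eta>0$ that avoids $F$ and the $\epsilon$-exceptional sets. Take a shortest path $P_{ij}$ through this region from $C_i$ to $C_j$. Its interior, minus the first and last vertices, will be contracted to the subdivision vertex $s_{ij}$. Then add an $11\delta$-ball around the midpoint of $P_{ij}$ to $F$.
\end{enumerate}
Shortest paths are essential: a subpath of a geodesic cannot have a chord to a far part of itself without shortening it. This lets us control adjacencies.

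The main obstacle is inducedness. We must ensure:
\begin{itemize}
\item $C_i$ and $C_j$ are non-adjacent for $i\neq j$;
\item each $s_{ij}$ touches only $C_i$ and $C_j$;
\item distinct subdivision paths are non-adjacent.
\end{itemize}
The constant $11$ is what should make this work. Two pieces whose centres are more than $11\delta$ apart, and whose "cores" have radius $O(\delta)$, cannot be adjacent. In the endpoint-light metric, adjacent vertices are at distance $\frac12(w(u)+w(v))$, so we must also handle heavy vertices. We do this by truncating $C_i$ and $P_{ij}$ to their $\leq 5\delta$-cores and treating the remaining long geodesic stretches carefully.

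For the long stretches of $P_{ij}$, we use the following rerouting rule. Whenever $P_{ij}$ touches an earlier piece, we stop at the first contact. We then either absorb the segment into the corresponding branch set, if the contact is with $C_i$ or $C_j$, or restart from a new pair $a,b$ chosen outside the enlarged forbidden region. Each restart is charged to a new $\eta$-ball or $\epsilon$-exceptional set, which keeps the total within the budget.

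I expect this bookkeeping to be the hard part: showing that every unwanted adjacency can be charged to one of the $(n+m)$ balls or $(2n+2)$ exceptional sets, and checking that the radii compose to at most $11\delta$ under the triangle inequality.
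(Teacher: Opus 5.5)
There is a genuine gap, and it lies in the part you defer as bookkeeping. Your branch sets $C_i$ are small balls around arbitrary centres, so $\mu(C_i)\le\eta$ and possibly $\mu(C_i)=0$. But $\neg$(2) only constrains sets of measure $>\epsilon$. The step ``use $\neg$(2) to get $a$ on the $N_\delta(C_i)$-side and $b$ on the $N_\delta(C_j)$-side'' therefore has no content, and nothing bounds $d(C_i,C_j)$. Two instances in which (1) and (2) both fail show the consequences:
\begin{enumerate}
\item Take a bounded-degree expander with uniform measure, $\delta$ a large constant and $|V|\to\infty$. The geodesic $P_{ij}$ has length of the order of the diameter, so one $11\delta$-ball around its midpoint does not shield it from later pieces.
\item Place a centre at the end of a long pendant path carrying almost no measure, which ``$x_i\notin F$'' allows. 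Every path leaving $C_i$ then passes through the same cut vertex, so two subdivision paths at $i$ can never be made non-adjacent, however often you restart.
\end{enumerate}
The restarts are also unpaid: your budget of $n+m$ balls and $2n+2$ exceptional sets is already fully allocated, and nothing bounds the number of restarts.

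Separately, the claim that pieces with centres more than $11\delta$ apart and $O(\delta)$-radius cores are non-adjacent fails in $d_w^G$. An edge $uv$ has length $\frac12(w(u)+w(v))$. Non-centre vertices of $5\delta$-cores can have weight up to $10\delta$ and be adjacent while their centres are $20\delta$ apart. A heavy centre can even be adjacent to vertices arbitrarily far away in $d$.

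The missing idea is to make the branch sets \emph{large}, so that $\neg$(2) applies to them, while keeping them inside a region whose points are close to all of them at once. The paper does this inductively (\cref{lem:pure-pairs-metric} with $\rho=4\delta$):
\begin{enumerate}
\item Suppose $A_1,\dots,A_{k-1}$ are $\delta$-connected, at mutual distance $>4\delta$, and have common shell $C=\bigcap_{i<k}B(A_i,5\delta)\setminus B(A_1\cup\dots\cup A_{k-1},4\delta)$.
\item Grow a $\delta$-connected $X\subseteq C$, setting aside fully grown $\delta$-components of $C$ in a set $Y$. Stop when $\mu(B(X,4\delta)\cup Y)$ first exceeds $\epsilon$; by $\neg$(1) this costs at most $\epsilon+\eta$.
\item The set $C\setminus(Y\cup B(X,5\delta))$ is more than $\delta$ from $B(X,4\delta)\cup Y$, so $\neg$(2) gives it measure at most $\epsilon$. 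With $A_k:=X$, the new shell therefore loses at most $2\epsilon+\eta$.
\end{enumerate}
After $n$ steps the shell has measure at least $1-2n\epsilon-n\eta>(m-1)\eta$, so it contains $m$ points pairwise more than $11\delta$ apart. Join each to its two branch sets $B(A_i,\delta/2)$ by geodesics of length at most $5\delta$. This gives a $\delta$-fat model (\cref{thm:pure-pairs-metric}) with no rerouting at all: the $4\delta$ versus $5\delta$ gap rules out contact with non-incident branch sets, and the $11\delta$ spacing separates distinct edges.

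Heavy vertices are removed beforehand. The vertices of $Z=\{x\colon w(x)>\delta\}$ are pairwise more than $\delta$ apart. So either some singleton of $Z$ gives (1), or $\mu(Z)>2\epsilon+\eta$ and $Z$ splits into two witnesses for (2), or one deletes $Z$ and renormalises $\mu$. The extra $2\epsilon+\eta$ in the hypothesis pays for this step. In $G-Z$ every edge has length at most $\delta$, and distances up to $\delta$ are unchanged. Hence $\delta$-fatness yields inducedness once each edge set is replaced by the interior of a shortest path between its two branch sets.
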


\Cref{thm:pure-pairs-subdiv} is inspired by a result of Chudnovsky, Scott, Seymour, and Spirkl~\cite[6.4]{chudnovsky2021pure} towards their proof that graphs excluding the subdivisions of a graph $H$ and their complements as induced subgraphs have the strong Erdős-Hajnal property.
They gave a version for induced subdivisions rather than induced minors, but for graphs rather than our more general associated metrics.
A similar graph version was also proved for pivot-minors~\cite{davies2025pivot}.

Fix a finite metric space $(U,d)$.
For a set $X\subseteq U$ and a number $r\geq 0$, let $\defn{B(X,r)}=\{y\in U\colon d(X,y)\leq r\}$.
For a number $\delta\geq 0$, the graph \defn{$\Gamma_\delta$} has $U$ as the vertex set and $\{xy\colon d(x,y)\leq\delta\}$ as the edge set.
For $\delta\geq 0$ and a set $X\subseteq U$, the set is \defn{$\delta$-connected} if $\Gamma_\delta[X]$ is connected, and the \defn{$\delta$-components} of $X$ are the connected components of $\Gamma_\delta[X]$.
We require the following lemma.

\begin{lemma}\label{lem:pure-pairs-metric}
Let\/ $n\in\NN$, $\epsilon,\eta\geq 0$ with\/ $2n\epsilon+n\eta<1$, $(U,d)$ be a finite metric space, $\mu$ be an outer measure on\/ $U$ with\/ $\mu(U)=1$, and\/ $\rho,\delta\geq 0$.
Then at least one of the following holds:
\begin{enumerate}
\item there is\/ $x\in U$ with\/ $\mu(B(x,\rho))>\eta$;
\item there are\/ $A,B\subseteq U$ with\/ $\mu(A)>\epsilon$, $\mu(B)>\epsilon$, and\/ $d(A,B)>\delta$;
\item there are\/ $\delta$-connected sets\/ $A_1,\ldots,A_n$ at mutual distance greater than\/ $\rho$ with
\[\mu\bigl(B(A_1,\rho+\delta)\cap\cdots\cap B(A_n,\rho+\delta)\setminus B(A_1\cup\cdots\cup A_n,\rho)\bigr) \geq 1-2n\epsilon-n\eta\text{.}\]
\end{enumerate}
\end{lemma}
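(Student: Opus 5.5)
We argue by contradiction: assume that neither (1) nor (2) holds, and construct the sets of (3) one at a time. For $k=0,1,\ldots,n$ we maintain $\delta$-connected sets $A_1,\ldots,A_k$ at mutual distance greater than $\rho$, together with the \emph{residual set}
\[R_k:=B(A_1,\rho+\delta)\cap\cdots\cap B(A_k,\rho+\delta)\setminus B(A_1\cup\cdots\cup A_k,\rho),\]
with the invariant $\mu(R_k)\geq 1-2k\epsilon-k\eta$. We start from $R_0=U$, and the case $k=n$ is exactly (3).

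\textbf{Where the new set is chosen.} The key point is that $A_{k+1}$ is chosen as a $\delta$-connected subset of $R_k$. Since $R_k$ avoids $B(A_1\cup\cdots\cup A_k,\rho)$, the set $A_{k+1}$ is automatically at distance greater than $\rho$ from every earlier $A_i$. Moreover
\[R_{k+1}=R_k\cap B(A_{k+1},\rho+\delta)\setminus B(A_{k+1},\rho),\]
so subadditivity gives
\[\mu(R_{k+1})\geq\mu(R_k)-\mu\bigl(R_k\setminus B(A_{k+1},\rho+\delta)\bigr)-\mu\bigl(B(A_{k+1},\rho)\bigr).\]
It therefore suffices to find a $\delta$-connected $A\subseteq R_k$ with two properties:
\begin{itemize}
\item[(a)] $\mu\bigl(R_k\setminus B(A,\rho+\delta)\bigr)\leq\epsilon$, and
\item[(b)] $\mu\bigl(B(A,\rho)\bigr)\leq\epsilon+\eta$.
\end{itemize}
Then the invariant drops by at most $2\epsilon+\eta$ per step, as required.

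\textbf{Growing $A$.} We build $A$ greedily inside a $\delta$-component $C$ of $R_k$. Start with $A=\{x\}$ for some $x\in C$, and repeatedly add a point of $C$ within distance $\delta$ of the current $A$. This keeps $A$ $\delta$-connected, and we stop at the first moment (a) holds. Bound (b) comes from the failure of (1) and (2).
\begin{itemize}
\item If we stop immediately, then $\mu(B(x,\rho))\leq\eta$ because (1) fails.
\item Otherwise, let $A=A'\cup\{y\}$, where $A'$ is the set just before the last addition, so (a) failed for $A'$. Any point of $B(A',\rho)$ and any point outside $B(A',\rho+\delta)$ are at distance greater than $\delta$. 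Since (2) fails and $\mu(R_k\setminus B(A',\rho+\delta))>\epsilon$, we get $\mu(B(A',\rho))\leq\epsilon$.
\item Then $B(A,\rho)\subseteq B(A',\rho)\cup B(y,\rho)$, so $\mu(B(A,\rho))\leq\epsilon+\eta$, again using that (1) fails.
\end{itemize}

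\textbf{Main obstacle: choosing the component $C$.} The growth can exhaust $C$ before (a) holds. We must choose $C$ so that $\mu(R_k\setminus B(C,\rho+\delta))\leq\epsilon$; note that the greedy process ends once $A=C$, since then $B(A,\rho+\delta)\supseteq B(C,\rho+\delta)$. To find such a $C$:
\begin{itemize}
\item Distinct $\delta$-components of $R_k$ are at distance greater than $\delta$. Hence for any union $X$ of components, the failure of (2) gives $\min\{\mu(X),\mu(R_k\setminus X)\}\leq\epsilon$.
\item Order the components and take the shortest prefix $X$ with $\mu(X)>\epsilon$. Such a prefix exists because $\mu(R_k)\geq 1-2k\epsilon-k\eta>2\epsilon$ for $k<n$, using $2n\epsilon+n\eta<1$.
\item Then $\mu(R_k\setminus X)\leq\epsilon$. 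If $C$ is the last component added to $X$, minimality of the prefix gives $\mu(X\setminus C)\leq\epsilon$.
\item So everything outside $C$ has measure at most $2\epsilon$, and outside $B(C,\rho+\delta)$ also at most $2\epsilon$, not $\epsilon$.
\end{itemize}
Closing this factor-of-two gap is the delicate part, and I expect it to be the main obstacle. First I would restart the growth across components, replacing $R_k$ by $C$ in (a) and using that $R_k\setminus C$ is at distance greater than $\delta$ from $C$. Failing that, I would charge the leftover $\epsilon$ against (b), noting that when the growth runs through $C$, the set $B(A',\rho)$ may already be bounded using the pair $(B(A',\rho),R_k\setminus C)$. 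If neither works, I would verify that the argument still goes through with the per-step loss stated in (3).
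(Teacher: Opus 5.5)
\textbf{There is a genuine gap.} Your growth argument inside a single $\delta$-component is correct. The problem is that the reduction to (a) and (b) is too strong, and the factor-of-two issue you flag is not a technicality. Condition (a) charges every other $\delta$-component of $R_k$ to the $\epsilon$ budget, and this can be unsatisfiable even when the lemma holds.

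Take $n=1$, $\epsilon=0.4$, $\eta=0.01$, $\rho=0$, $\delta=1$. Let $U$ be three clusters of $100$ points each, with distance $1$ within a cluster and distance $10$ across clusters, and let $\mu$ be the normalized counting measure.
\begin{enumerate}
\item Outcome (1) fails, since $\mu(B(x,0))=1/300\leq\eta$.
\item Outcome (2) fails: two sets at distance greater than $1$ meet disjoint collections of clusters, while measure greater than $0.4$ requires meeting at least two clusters.
\item Yet every $\delta$-connected $A$ lies in one cluster, so $\mu(U\setminus B(A,\rho+\delta))=2/3>\epsilon$, and (a) fails for every candidate.
\end{enumerate}
So the proposal stops exactly at its decisive step. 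Your two fallback suggestions point in roughly the right direction, but neither is carried out.

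\textbf{The missing idea.} The paper charges exhausted components to the \emph{inner} budget together with $B(X,\rho)$, and stops on a threshold for that inner set rather than for the outer one. Concretely:
\begin{enumerate}
\item Maintain a union $Y$ of $\delta$-components of $R_k$, initially empty, and grow a $\delta$-connected $X\subseteq R_k\setminus Y$ one point at a time.
\item Stop as soon as $\mu(B(X,\rho)\cup Y)>\epsilon$. Otherwise, whenever $X$ becomes a whole component, move it into $Y$ and restart from a fresh point.
\item Stopping must occur at the latest when $X$ exhausts the last component, since $\mu(R_k)>\epsilon$.
\item Each step enlarges the inner set by a single ball $B(x,\rho)$. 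This includes restarts, because $X\subseteq B(X,\rho)$. Hence the failure of (1) gives $\mu(B(X,\rho)\cup Y)\leq\epsilon+\eta$ at stopping.
\item Set $Z:=R_k\setminus(B(X,\rho+\delta)\cup Y)$. It is at distance greater than $\delta$ from $B(X,\rho)\cup Y$, because $Y$ is a union of $\delta$-components of $R_k$. So the failure of (2) gives $\mu(Z)\leq\epsilon$.
\item Since $R_k\setminus B(X,\rho+\delta)\subseteq Y\cup Z$, subadditivity yields $\mu(R_{k+1})\geq\mu(R_k)-\mu(B(X,\rho)\cup Y)-\mu(Z)\geq\mu(R_k)-2\epsilon-\eta$ with $A_{k+1}:=X$.
\end{enumerate}
Your own stopping rule also works once you replace the outer set in (a) by $R_k\setminus(B(A,\rho+\delta)\cup Y)$ and the inner set in (b) by $B(A,\rho)\cup Y$.
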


\begin{proof}
The proof goes by induction on $n$, where the last inequality for $n=0$ reads $\mu(U)\geq 1$, so it holds by assumption.
Suppose that $n\geq 1$, the lemma holds for $n-1$, and neither (1) nor (2) holds, that is, we have the following:
\begin{itemize}
\item[($\neg$1)] there is no $x\in U$ with $\mu(B(x,\rho))>\eta$;
\item[($\neg$2)] there are no $A,B\subseteq U$ with $\mu(A)>\epsilon$, $\mu(B)>\epsilon$, and $d(A,B)>\delta$.
\end{itemize}
It follows that there are $\delta$-connected sets $A_1,\ldots,A_{n-1}$ at mutual distance greater than $\rho$ such that $\mu(C)\geq 1-2(n-1)\epsilon-(n-1)\eta$, where
\[C = B(A_1,\rho+\delta)\cap\cdots\cap B(A_{n-1},\rho+\delta)\setminus B(A_1\cup\cdots\cup A_{n-1},\rho){.}\]
Let $Y=\emptyset$ initially.
Grow a set $X\subseteq C\setminus Y$ starting from an arbitrary point and adding points one by one while keeping $X$ $\delta$-connected, stopping when $\mu(B(X,\rho)\cup Y)>\epsilon$ or when no more points can be added, i.e., the current set $X$ is a $\delta$-component of $C$.
In the latter case, add the current set $X$ to $Y$, set $X=\emptyset$, and continue growing $X\subseteq C\setminus Y$ as before starting from an arbitrary point.
This maintains an invariant that $Y$ is a union of $\delta$-components of $C$.
Since $\mu(C)>\epsilon$ and by ($\neg$1), the process must eventually stop with $\epsilon<\mu(B(X,\rho)\cup Y)\leq\epsilon+\eta$.
Let $Z=C\setminus(B(X,\rho+\delta)\cup Y)$.
Since $d(B(X,\rho)\cup Y,Z)>\delta$, it follows from ($\neg$2) that $\mu(Z)\leq\epsilon$.
Let $A_n=X$.
We conclude that $A_n$ is $\delta$-connected, $d(A_1\cup\cdots\cup A_{n-1},A_n)>\rho$, and
\begin{align*}
&\mu\bigl(B(A_1,\rho+\delta)\cap\cdots\cap B(A_n,\rho+\delta)\setminus B(A_1\cup\cdots\cup A_n,\rho)\bigr) \\
&\qquad= \smash[t]{\mu\bigl(C\cap B(X,\rho+\delta)\setminus B(X,\rho)\bigr) \geq \mu(C)-\mu(B(X,\rho)\cup Y)-\mu(Z)} \\
&\qquad\geq \smash[t]{1-2(n-1)\epsilon-(n-1)\eta-2\epsilon-\eta = 1-2n\epsilon-n\eta}{.}\qedhere
\end{align*}
\end{proof}

Let $G$ be a graph with non-negative edge lengths, $d$ be the shortest-path metric of $G$, and $\delta\geq 0$.
A \defn{$\delta$-fat model} of a graph $H=(V_H,E_H)$ in $G$ is a mapping $\phi$ from $V_H\cup E_H$ to connected vertex subsets of $G$ such that for any two $a,b\in V_H\cup E_H$, if $a$ and $b$ are incident (one being a vertex and the other being an edge), then $\phi(a)\cap\phi(b)\neq\emptyset$, otherwise $d(\phi(a),\phi(b))>\delta$.
Such $\delta$-fat models (or $\delta$-fat minors) have been a major subject of recent research in coarse graph theory; see~\cite{georgakopoulos2025graph} for a survey.

We will derive \cref{thm:pure-pairs-subdiv} from the following (essentially stronger) result for $\delta$-fat models.

\begin{theorem}\label{thm:pure-pairs-metric}
Let\/ $n,m\in\NN$ with\/ $m\geq 1$, $\epsilon,\eta\geq 0$ with\/ $2n\epsilon+(n+m-1)\eta<1$, $G=(V,E)$ be a graph with non-negative edge lengths, $d$ be the shortest-path metric of\/ $G$, $\mu$ be an outer measure on\/ $V$ with\/ $\mu(V)=1$, $H$ be a graph with\/ $n$ vertices and\/ $m$ edges, and\/ $\delta\geq 0$.
Then at least one of the following holds:
\begin{enumerate}
\item there is\/ $x\in V$ with\/ $\mu(B(x,11\delta))>\eta$;
\item there are\/ $A,B\subseteq V$ with\/ $\mu(A)>\epsilon$, $\mu(B)>\epsilon$, and\/ $d(A,B)>\delta$;
\item there is a\/ $\delta$-fat model of\/ $H$ in\/ $G$.
\end{enumerate}
\end{theorem}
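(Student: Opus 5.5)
The plan is to apply \cref{lem:pure-pairs-metric} to $(V,d)$ with a radius $\rho$ equal to a fixed multiple of $\delta$ (say $\rho=4\delta$), so that the ball radius in its outcome~(1) is at most $11\delta$. If outcome (1) or (2) of the lemma holds, then (1) or (2) of \cref{thm:pure-pairs-metric} holds immediately. Otherwise we obtain $\delta$-connected sets $A_1,\dots,A_n$, indexed by $V(H)$, at mutual distance $>\rho$. We also obtain the set
\[C=B(A_1,\rho+\delta)\cap\cdots\cap B(A_n,\rho+\delta)\setminus B(A_1\cup\cdots\cup A_n,\rho),\]
with $\mu(C)\geq 1-2n\epsilon-n\eta>(m-1)\eta$. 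The vertices $A_i$ will form the cores of the branch sets, and the points of $C$ serve as hubs from which each edge of $H$ is realized. Since $G$ carries edge lengths, each $\delta$-connected $A_i$ is first made connected: for every pair $x,y\in A_i$ with $d(x,y)\leq\delta$, add a shortest $x$--$y$ path. The resulting set $A_i'$ is connected and lies within $\delta/2$ of $A_i$, so the $A_i'$ remain at mutual distance $>\rho-\delta$.

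Next, choose hub points $x_e\in C$, one for each edge $e\in E(H)$, greedily. After choosing $x_e$, delete $B(x_e,11\delta)$ from the remaining part of $C$. If (1) fails, each deletion costs measure at most $\eta$, and only $m-1$ deletions occur before the last choice. Since $\mu(C)>(m-1)\eta$, all $m$ hubs exist, and they are pairwise at distance $>11\delta$. This is exactly where the hypothesis $2n\epsilon+(n+m-1)\eta<1$ is used.

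For $e=ij$, take shortest paths $P_{e,i}$ from $x_e$ to $A_i$ and $P_{e,j}$ from $x_e$ to $A_j$, each of length at most $\rho+\delta$. Let $\phi(e)$ be the part of $P_{e,i}\cup P_{e,j}$ within distance $r$ of $x_e$, for a suitable $r$ of order $\delta$ (say $r\approx 2\delta$). Let $\phi(i)$ be $A_i'$ together with the remaining initial segments of all paths $P_{e,i}$ with $e\ni i$. Then $\phi(i)$ and $\phi(e)$ are connected and meet whenever $i\in e$. To make the cut point a vertex, include the vertex where the path crosses distance $r$ in both sets.

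It remains to verify the fatness inequalities, and I expect this distance bookkeeping to be the main obstacle. Three separations are needed.
\begin{enumerate}
\item Edge sets of distinct edges are far apart: $\phi(e)\subseteq B(x_e,r)$ and the hubs are $>11\delta$ apart, so their distance is $>11\delta-2r>\delta$.
\item An edge set $\phi(e)$ is far from a non-incident $\phi(k)$. First, $\phi(e)$ is far from $A_k'$: a point of $\phi(e)$ is within $r$ of $x_e$, and $d(x_e,A_k)>\rho$. Second, $\phi(e)$ is far from the path segments belonging to $\phi(k)$: these lie outside $B(x_{e'},r)$ around other hubs $x_{e'}$, and at distance $\geq$ about $\rho+\delta-$(length) from their endpoint hub.
\item Distinct vertex sets $\phi(i),\phi(k)$ are far apart. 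Here one needs that a point $p$ on $P_{e,i}$ at distance $t$ from $x_e$ satisfies both $d(p,A_k)>\rho-t$ (since $d(x_e,A_k)>\rho$) and $d(p,A_k)>\rho-d(p,A_i)\geq\rho-(\rho+\delta-t)$ (since $d(A_i,A_k)>\rho$). Together these give a positive margin once $t\geq r$.
\end{enumerate}
The same two-sided estimate, applied to pairs of segments of paths heading to different $A$'s, handles segment--segment distances. I would first try to make all these estimates work simultaneously by tuning $\rho$ and $r$, at which point the constant $11$ should emerge. If the margins are too tight, I would trim the segments assigned to $\phi(i)$ to lie within $\rho/2$ of $A_i$ and absorb the middle portions into $\phi(e)$, at the cost of a slightly larger $r$.
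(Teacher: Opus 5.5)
Your reduction is the same as the paper's. You apply \cref{lem:pure-pairs-metric} with $\rho=4\delta$, pick greedy hubs $x_e\in C$ at mutual distance greater than $11\delta$ (using $\mu(C)>(m-1)\eta$), take shortest paths of length at most $5\delta$ from $x_e$ to the two relevant sets $A_i$, and thicken each $A_i$ by $\delta/2$. The gap is in the step you flag as the main obstacle. The fatness check for your split model is never carried out, and for your suggested $r\approx 2\delta$ it fails.

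Two shortest paths leaving the same hub $x_e$, with $e=ik$, can share an initial segment of length almost $3\delta$. Take edges $x_eb$, $ba_i$, $ba_k$, each of length $2.5\delta$, with $A_i=\{a_i\}$ and $A_k=\{a_k\}$. Then $d(A_i,A_k)=5\delta>4\delta$ and $d(x_e,A_i)=d(x_e,A_k)=5\delta$, which is consistent with $x_e\in C$. Yet however you assign the crossing vertex, $\phi(i)\cap\phi(k)\neq\emptyset$: it contains $b$ or $x_e$.

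Your two-sided estimate only gives $d(p,A_k)>\max\{\rho-t,t-\delta\}\geq 1.5\delta$. That separates $p$ from $A_k'$, but not from the segment of $\phi(k)$ leaving the same hub. For that segment the available bound is $d(p,q)>d(A_i,A_k)-d(p,A_i)-d(q,A_k)>2r-6\delta$, which forces $r\geq 3.5\delta$.

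There is also a problem with arbitrary edge lengths: once the crossing vertex is added, $\phi(e)\subseteq B(x_e,r)$ no longer holds. You only get $\phi(e)\subseteq B(x_e,5\delta)$, which still suffices because $11\delta-10\delta=\delta$.

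The missing observation is that no splitting is needed. A $\delta$-fat model only requires incident pairs to intersect, so $\phi(e)$ may contain both whole paths and overlap $\phi(v_i)$ freely. The paper takes $\phi(v_i)=B(A_i,\delta/2)$, which is connected because $A_i$ is $\delta$-connected, and lets $\phi(e)$ be the union of the two shortest paths. This is your scheme with $r\geq 5\delta$.

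With that choice, vertex--vertex distances exceed $3\delta$, and edge--edge distances exceed $\delta$ via $B(x_e,5\delta)$. The only real check is a non-incident edge--vertex pair. Suppose $u$ lies on a shortest path from $x_e$ to $A_i$ and $d(u,A_\ell)\leq 1.5\delta$. Then $d(x_e,u)>2.5\delta$ because $d(x_e,A_\ell)>4\delta$, and $d(u,A_i)>2.5\delta$ because $d(A_i,A_\ell)>4\delta$. Hence $d(x_e,A_i)>5\delta$, contradicting $x_e\in C$.

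Your split version can be rescued for any $r\in[3.5\delta,5\delta]$. Put the first vertex beyond distance $r$ into both sets, and separate segments at different hubs using $d(x_e,x_{e'})>11\delta$. This buys nothing over the unsplit construction.
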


\begin{proof}
\Cref{lem:pure-pairs-metric} applied with $U=V$ and $\rho=4\delta$ yields (1), (2), or $\delta$-connected sets $A_1,\ldots,A_n$ at mutual distance greater than $4\delta$ with $\mu(C)\geq 1-2n\epsilon-n\eta$, where
\[C = B(A_1,5\delta)\cap\cdots\cap B(A_n,5\delta)\setminus B(A_1\cup\cdots\cup A_n,4\delta){.}\]
Assume the latter outcome, and suppose that (1) does not hold, that is, there is no $x\in V$ with $\mu(B(x,11\delta))>\eta$.
Since $\mu(C)\geq 1-2n\epsilon-n\eta>(m-1)\eta$, we can greedily find $m$ points $x_1,\ldots,x_m\in C$ at mutual distance greater than $11\delta$.

We construct a $\delta$-fat model $\phi$ of $H$ in $G$ as follows.
Let $v_1,\ldots,v_n$ and $e_1,\ldots,e_m$ denote the vertices and the edges of $H$.
For each $i\in\{1,\ldots,n\}$, let $\phi(v_i)=B(A_i,0.5\delta)$; this is a connected vertex subset of $G$, as $A_i$ is $\delta$-connected.
It follows that $d(\phi(v_i),\phi(v_j))\geq d(A_i,A_j)-\delta>3\delta$ when $i\neq j$.
For each $k\in\{1,\ldots,m\}$, let $i,j\in\{1,\ldots,n\}$ be such that $e_k=v_iv_j$, and let $\phi(e_k)$ be the union of shortest paths connecting $x_k$ to $A_i$ and $A_j$, either of which has length at most $5\delta$.
It is clear that an incident vertex-edge pair is mapped to intersecting vertex subsets.
For any two edges $e_k$ and $e_\ell$ with $k\neq\ell$, since $\phi(e_k)\subseteq B(x_k,5\delta)$, $\phi(e_\ell)\subseteq B(x_\ell,5\delta)$, and $d(x_k,x_\ell)>11\delta$, it follows that $d(\phi(e_k),\phi(e_\ell))>\delta$.
Finally, consider an edge $e_k$ and a vertex $v_\ell$ that are not incident.
Suppose towards a contradiction that $d(\phi(e_k),\phi(v_\ell))\leq\delta$, which implies $d(\phi(e_k),A_\ell)\leq 1.5\delta$.
There is a vertex $u$ on one of the two paths forming $\phi(e_k)$, say, a shortest path from $x_k$ to $A_i$, such that $d(u,A_\ell)\leq 1.5\delta$.
Since $d(x_k,A_\ell)>4\delta$ and $d(A_i,A_\ell)>4\delta$, it follows that $d(x_k,u)>2.5\delta$ and $d(A_i,u)>2.5\delta$, so $d(x_k,A_i)>5\delta$, contradicting the fact that $x_k\in C$.
This shows that $d(\phi(e_k),\phi(v_\ell))>\delta$.
\end{proof}

We are now ready to prove \cref{thm:pure-pairs-subdiv}.

\begin{proof}[Proof of \cref{thm:pure-pairs-subdiv}]
Let $Z=\{x\in V\colon w(x)>\delta\}$.
If $\mu(\{x\})>\eta$ for some $x\in Z$, then (1) holds.
Otherwise, if $\mu(Z)>2\epsilon+\eta$, then (2) holds, as $d(x,y)>\delta$ for any distinct $x,y\in Z$.
For the rest of the proof, suppose $\mu(Z)\leq 2\epsilon+\eta$ and thus $\mu(V\setminus Z)\geq\mu(V)-\mu(Z)\geq 1-2\epsilon-\eta$.
Define an outer measure $\mu'$ on $V\setminus Z$ and constants $\epsilon',\eta'$ by
\[\mu'(X) = \frac{\mu(X)}{\mu(V\setminus Z)}{,} \qquad \epsilon' = \frac{\epsilon}{\mu(V\setminus Z)}{,} \qquad \eta' = \frac{\eta}{\mu(V\setminus Z)}{.}\]
It follows that $\mu'(V\setminus Z)=1$, and since $(2n+2)\epsilon+(n+m)\eta<1$, we have
\[2n\epsilon'+(n+m-1)\eta' = \frac{2n\epsilon+(n+m-1)\eta}{\mu(V\setminus Z)} < \frac{1-2\epsilon-\eta}{1-2\epsilon-\eta} = 1{.}\]

\Cref{thm:pure-pairs-metric} applied with $G-Z$, $\mu'$, $\epsilon'$, $\eta'$ in place of $G$, $\mu$, $\epsilon$, $\eta$ and with $d'$ and $B'$ denoting the metric and the balls defined for $G-Z$, yields at least one of the following outcomes:
\begin{enumerate}
\item[(1$'$)] there is $x\in V\setminus Z$ with $\mu'(B'(x,11\delta))>\eta'$;
\item[(2$'$)] there are $A,B\subseteq V\setminus Z$ with $\mu'(A)>\epsilon'$, $\mu'(B)>\epsilon'$, and $d'(A,B)>\delta$;
\item[(3$'$)] there is a $\delta$-fat model of $H$ in $G-Z$ (with respect to the metric $d'$).
\end{enumerate}
By the definition of $\mu'$, $\epsilon'$, and $\eta'$ we have $\mu'(X)>\epsilon'$ ($\eta'$) if and only if $\mu(X)>\epsilon$ ($\eta$, respectively).
Moreover, for any two sets $A,B\subseteq V\setminus Z$, we have $d'(A,B)\leq\delta$ if and only if $d(A,B)\leq\delta$, because every path in $G$ passing through a vertex in $Z$ has length greater than $\delta$.
So outcomes (1$'$) and (2$'$) yield (1) and (2), respectively.
Now, suppose we have outcome (3$'$).
Modify the model $\phi$ by replacing, for every edge $e=uv$ of $H$, the set $\phi(e)$ by the set of inner vertices of a shortest path in $\phi(e)$ connecting $\phi(u)$ and $\phi(v)$, to represent the vertex subdividing $e$ in the requested model of the $1$-subdivision of $H$ in $G$.
Since $w(x)\leq\delta$ for every $x\in V\setminus Z$, every edge of $G-Z$ has length at most $\delta$, so the $\delta$-fatness property implies that $\phi$ becomes an induced minor model of the $1$-subdivision of $H$ in $G$.
\end{proof}

When $\mu(X)$ can be computed in polynomial time, as is the case when $\mu(X)={|X|}/{|V|}$, the proof of \cref{thm:pure-pairs-subdiv} yields a polynomial time algorithm for finding one of the three outcomes.
Thus, we obtain the following.

\begin{theorem}
\label{thm:pure-pairs-subdiv-poly}
Let\/ $n,m\in\NN$ with\/ $m\geq 1$, $\epsilon,\eta\geq 0$ with\/ $(2n+2)\epsilon+(n+m)\eta<1$, $G=(V,E)$ be a graph with vertex weights\/ $w\colon V\to\RR_{\geq 0}$, $d=d_w^G$ be the endpoint-light vertex-weighted shortest-path metric of\/ $G$, $H$ be a graph with\/ $n$ vertices and\/ $m$ edges, and\/ $\delta\geq 0$.
Then, in time polynomial in\/ $|V|$, one can obtain one of the following outputs:
\begin{enumerate}
\item a vertex\/ $x\in V$ with\/ ${|B(x,11\delta)|}>\eta{|V|}$;
\item vertex sets\/ $A,B\subseteq V$ with\/ ${|A|}>\epsilon{|V|}$, ${|B|}>\epsilon{|V|}$, and\/ $d(A,B)>\delta$;
\item an induced minor model of the\/ $1$-subdivision of\/ $H$ in\/ $G$.
\end{enumerate}
\end{theorem}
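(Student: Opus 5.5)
The plan is to instantiate \cref{thm:pure-pairs-subdiv} with $\mu(X)={|X|}/{|V|}$ and check that every step of its proof, including the proofs of \cref{thm:pure-pairs-metric} and \cref{lem:pure-pairs-metric} it calls, is effective and runs in polynomial time. With this $\mu$, the outcomes of \cref{thm:pure-pairs-subdiv} turn into exactly the outputs (1)--(3): $\mu(B(x,11\delta))>\eta$ means ${|B(x,11\delta)|}>\eta{|V|}$, and similarly for (2). Outcome (3) becomes an explicit induced minor model, provided we actually construct it. Distances $d=d_w^G$, balls $B(x,r)$, set distances $d(A,B)$, and the graphs $\Gamma_\delta$ are all computable in polynomial time by Dijkstra or Floyd--Warshall with edge lengths $\frac12(w(u)+w(v))$. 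So the task reduces to making each existential step constructive and arguing that the number of steps is polynomial.

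We first reduce to $G-Z$. Computing $Z=\{x:w(x)>\delta\}$ is immediate. If ${|Z|}>(2\epsilon+\eta){|V|}$, we output (2) by splitting $Z$ into two parts $A,B$ of sizes ${|A|},{|B|}>\epsilon{|V|}$. The proof of \cref{thm:pure-pairs-subdiv} skips the split, so I would do it explicitly. If $\epsilon{|V|}<1$, two singletons work whenever ${|Z|}\geq 2$. Otherwise $\eta{|V|}<1$ lets us output a singleton for (1), or ${|Z|}>2\epsilon{|V|}+\eta{|V|}$ allows a split into halves after handling rounding. In the remaining case we run the construction on $G-Z$ with rescaled $\mu'$, which is again a counting measure up to scaling. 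Any (1$'$) or (2$'$) found there transfers to (1) or (2), since the balls and distances agree on $V\setminus Z$ for radii at most $\delta$-scale statements as argued in the proof. For (1$'$), $B'(x,11\delta)\subseteq B(x,11\delta)$ suffices because we only need a lower bound on size. For (2$'$) the equivalence $d'\leq\delta\iff d\leq\delta$ was shown.

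Next we make \cref{lem:pure-pairs-metric} algorithmic by iterating $n$ rounds instead of using induction. In each round we compute $C$ and run the greedy growth of $X$ and $Y$. After every point addition we check whether some single point $x$ has ${|B(x,\rho)|}>\eta{|V|}$, and if so output (1). We also check whether the growth stopped with ${|B(X,\rho)\cup Y|}>\epsilon{|V|}$. The proof's claims then become verifiable tests. If ${|B(X,\rho)\cup Y|}>\epsilon{|V|}+\eta{|V|}$ fails, then the point $x$ added last witnesses (1). This holds because $B(X,\rho)\cup Y\subseteq (B(X\setminus\{x\},\rho)\cup Y)\cup B(x,\rho)$, and when a component is moved into $Y$ the measure does not jump. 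If ${|Z|}>\epsilon{|V|}$, we output (2) with the pair $B(X,\rho)\cup Y$ and $Z$. Each round uses $\Oh(|V|)$ additions, so the total time is polynomial. In \cref{thm:pure-pairs-metric} we pick $x_1,\ldots,x_m$ greedily. If the greedy procedure gets stuck, the union of $11\delta$-balls around the points chosen so far covers $C$, and one of these balls has size exceeding $\eta{|V|}$, which gives output (1). The fat model is then built explicitly from $0.5\delta$-neighbourhoods and shortest paths. Finally, we convert it into an induced minor model by taking the inner vertices of shortest paths, as done at the end of the proof of \cref{thm:pure-pairs-subdiv}.

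The main obstacle is bookkeeping rather than any new idea. The proofs derive (1) or (2) by contradiction, and every such contradiction has to be turned into an explicit witness. The subtle cases are the moment when the greedy growth "overshoots" by more than $\eta$, and the partition of $Z$ into two large parts. Each such case has to produce a concrete vertex or a concrete pair of sets, and the witnesses from $G-Z$ have to be mapped back to $G$.
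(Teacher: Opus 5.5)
Your proposal is correct and follows the paper's route: the paper only remarks that, because $\mu(X)={|X|}/{|V|}$ is computable in polynomial time, the proof of \cref{thm:pure-pairs-subdiv} (via \cref{thm:pure-pairs-metric} and \cref{lem:pure-pairs-metric}) is already constructive. You spell out the witnesses that proof leaves implicit: splitting the heavy set $Z$, the overshoot and far-set tests in the growth process, and the stuck greedy choice of $x_1,\ldots,x_m$.

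One wording slip to fix: the overshoot test should read ``if ${|B(X,\rho)\cup Y|}\le(\epsilon+\eta){|V|}$ fails, then the last-added point witnesses (1)''. For the round count, you should also note that if $\eta{|V|}<1$ any vertex already gives (1), and otherwise $n+m<1/\eta\le{|V|}$.
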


\section{Rabinovich Embeddings}

We will now discuss how to construct non-expansive embeddings with bounded average distortion.
These will be our main tool for relating $Q(G)$ and $\Qbar(G)$.

Fix a finite metric space $(X,d)$.
Let $\Delta:=n^{-2}\sum_{x,y\in X}d(x,y)$ be the average pairwise distance.

First, note that for every set $S$, the embedding $f_S\colon X\to\RR$ defined by $f_S(x):=d(x,S)=\min_{s\in S}d(x,s)$ is non-expansive.

We split into two cases.
The first corresponds to the first outcome of \cref{thm:pure-pairs-subdiv-poly}.

\begin{lemma}[Large-core case]
\label{lemma:large-core}
Let\/ $\kappa>0$ and\/ $c>2$.
If there is\/ $x\in X$ with\/ $|B(x,\frac{\Delta}{c})|\geq\kappa n$, then the non-expansive embedding\/ $f_{B(x,\frac{\Delta}{c})}$ has average distortion at most\/ $[\frac{\kappa}{2}(1-\frac{2}{c})]^{-1}$.
\end{lemma}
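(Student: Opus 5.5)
Write $S:=B(x,\frac{\Delta}{c})$ and $f:=f_S$. The map $f$ is already known to be non-expansive, so the whole task is a lower bound on $\sum_{y,z}|f(y)-f(z)|$ in terms of $n^2\Delta=\sum_{y,z}d(y,z)$. Concretely, I will aim to show
\[\sum_{y,z\in X}|f(y)-f(z)|\geq \frac{\kappa}{2}\Bigl(1-\frac{2}{c}\Bigr)n^2\Delta,\]
which is exactly the claimed average distortion bound. The plan has two steps: first relate $f$ to the distance from the centre $x$, then show that the centre itself has large total distance.

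For the first step, every $s\in S$ satisfies $d(y,s)\geq d(y,x)-\frac{\Delta}{c}$, so $f(y)\geq d(y,x)-\frac{\Delta}{c}$. Since $f$ vanishes on $S$ and $f\geq 0$, I restrict the pairs to those with one point $s\in S$ and the other point $y$ arbitrary, counting both orders. This gives
\[\sum_{y,z}|f(y)-f(z)|\geq 2|S|\sum_{y}f(y)\geq 2\kappa n\sum_y\Bigl(d(y,x)-\frac{\Delta}{c}\Bigr)\]
(using $f(y)\ge 0$ before replacing it by the possibly negative lower bound, which is harmless since multiplying by $|S|\ge\kappa n$ needs the sum to be nonnegative; I will instead keep $f(y)\ge\max(0,\cdot)$ and only use $|S|\ge\kappa n$ at the end).

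For the second step, I need to lower-bound $\sum_y d(y,x)$. By the triangle inequality, $n^2\Delta=\sum_{y,z}d(y,z)\leq\sum_{y,z}\bigl(d(y,x)+d(x,z)\bigr)=2n\sum_y d(y,x)$, so $\sum_y d(y,x)\geq n\Delta/2$. Combining the two steps gives
\[\sum|f(y)-f(z)|\geq 2\kappa n\Bigl(\frac{n\Delta}{2}-\frac{n\Delta}{c}\Bigr)=\kappa n^2\Delta\Bigl(1-\frac{2}{c}\Bigr),\]
which is even a factor $2$ better than required. The conclusion therefore holds, with slack to absorb differences in how ordered versus unordered pairs are counted.

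The only delicate point is the ordering of the two inequalities in the first step, specifically where $f(y)\geq\max\bigl(0,d(y,x)-\frac{\Delta}{c}\bigr)$ is used. The quantity $\sum_y\bigl(d(y,x)-\frac{\Delta}{c}\bigr)\geq n\Delta(\frac12-\frac1c)$ is positive because $c>2$, so I can bound $\sum_y f(y)$ from below by this positive quantity first and only then multiply by $|S|\geq\kappa n$. This is the one place where the hypothesis $c>2$ is used. The case $\Delta=0$ is trivial, since every distance is then zero.
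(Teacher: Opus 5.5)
Your argument is correct and is essentially the paper's proof (which the paper adapts from Biswal, Lee, and Rao): lower-bound $\sum_u d(u,x)$ by $n\Delta/2$ via the triangle inequality through the centre, use $d(u,S)\geq d(u,x)-\frac{\Delta}{c}$, and keep only pairs with one endpoint in $S$. The one difference is that you count both orders of such pairs, which is valid because pairs in $S\times S$ contribute zero, and this gives the bound $[\kappa(1-\frac{2}{c})]^{-1}$, a factor $2$ better than the paper's, which sums only over $S\times(X\setminus S)$; your concern about the order of the inequalities is unnecessary, since $\sum_y f(y)\geq 0$ already justifies replacing $|S|$ by $\kappa n$.
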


\begin{proof}
We closely follow the proof by Biswal, Lee, and Rao~\cite[Theorem~4.4, Case~I]{biswal2010eigenvalue}.

Let $S:=B(x,\frac{\Delta}{c})$.
Let $f(x):=f_S(x)=d(x,S)$.
First,
\begin{align*}
n^2\Delta &= \sum_{u,v\in X}d(u,v) \leq \sum_{u,v}\bigl(d(u,x)+d(v,x)\bigr) = 2n\sum_{u\in X}d(u,x) \\
&\leq 2n\sum_{u\in X}\left(d(u,S)+\frac{\Delta}{c}\right) \leq \frac{2n^2\Delta}{c}+2n\sum_{u\in X}d(u,S){.}
\end{align*}
Therefore,
\[n\sum_{u\in X}d(u,S) \geq \frac{1}{2}\left(1-\frac{2}{c}\right)n^2\Delta{.}\]
Next,
\begin{align*}
\sum_{x,y\in X}\lvert f(x)-f(y)\rvert &\geq \sum_{x\in S}\sum_{y\notin S}\lvert f(x)-f(y)\rvert = \sum_{x\in S}\sum_{y\notin S}d(y,S) = {|S|}\sum_{y\notin S}d(y,S) \\
&= {|S|}\sum_{x\in X}d(x,S) \geq \kappa n\sum_{x\in X}d(x,S) \geq \frac{\kappa}{2}\left(1-\frac{2}{c}\right)n^2\Delta \\
&= \frac{\kappa}{2}\left(1-\frac{2}{c}\right)\sum_{x,y\in X}d(x,y){.}\qedhere
\end{align*}
\end{proof}

The second case corresponds to the second outcome of \cref{thm:pure-pairs-subdiv-poly}.

\begin{lemma}[Well-spread case]
\label{lemma:well-spread}
Let\/ $\kappa>0$ and\/ $c>2$.
Let\/ $A,B\subseteq X$ be disjoint sets with\/ $\min\{|A|,|B|\}\geq\kappa n$.
If\/ $d(A,B)\geq\frac\Delta c$, then the non-expansive embedding\/ $f_A$ has average distortion at most\/ $\frac{c}{\kappa^2}$.
\end{lemma}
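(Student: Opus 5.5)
The approach is to bound the denominator of the average distortion from below by restricting to pairs split between $A$ and $B$, and the numerator from above by the definition of $\Delta$. Let $f=f_A$, so $f(a)=0$ for every $a\in A$. We already noted that $f_A$ is non-expansive. The numerator of the average distortion is, by definition of the average pairwise distance,
\[\sum_{x,y\in X}d(x,y)=n^2\Delta.\]

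For the denominator, every term $|f(x)-f(y)|$ is non-negative, so we may discard all pairs except those with $x\in A$ and $y\in B$. Since $A$ and $B$ are disjoint, these are distinct ordered pairs. For such a pair, $f(x)=0$ and $f(y)=d(y,A)\geq d(A,B)\geq\frac{\Delta}{c}$. Hence
\[\sum_{x,y\in X}|f(x)-f(y)|\;\geq\;\sum_{x\in A}\sum_{y\in B}d(y,A)\;\geq\;|A|\,|B|\,\frac{\Delta}{c}\;\geq\;\kappa^2n^2\,\frac{\Delta}{c}.\]

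Dividing the two bounds, the average distortion is at most
\[\frac{n^2\Delta}{\kappa^2n^2\Delta/c}=\frac{c}{\kappa^2},\]
as claimed.

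The only point needing care is the degenerate case $\Delta=0$. Then all distances vanish, the average distortion is $0/0$, and we interpret it by convention as trivially bounded. The same convention presumably applies in \cref{lemma:large-core}. The hypothesis $c>2$ is not used here; it is only needed in the large-core case. There is no real obstacle in this argument.
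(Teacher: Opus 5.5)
Your proof is correct and is essentially the paper's argument: you restrict the denominator to pairs in $A\times B$, use $f_A\equiv 0$ on $A$ and $d(y,A)\geq d(A,B)\geq\Delta/c$ for $y\in B$, and compare with $\sum_{x,y}d(x,y)=n^2\Delta$. Your side remarks (the $\Delta=0$ convention, and that $c>2$ is not used here) are accurate but not needed for the argument.
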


\begin{proof}
Simply observe that
\begin{align*}
\sum_{x,y\in X}\lvert f(x)-f(y)\rvert &\geq \sum_{x\in A}\sum_{y\in B}\lvert f(x)-f(y)\rvert = \sum_{x\in A}\sum_{y\in B} d(y,A) = {|A|}\sum_{y\in B}d(y,A) \\
&\geq {|A|}\sum_{y\in B}\frac{\Delta}{c} \geq {|A|}\cdot{|B|}\frac{\Delta}{c} \geq \frac{\kappa^2}{c}n^2\Delta = \frac{\kappa^2}{c}\sum_{x,y\in X}d(x,y){.}\qedhere
\end{align*}
\end{proof}

\section{Proofs of Main Results}
\label{sec:main-proofs}

We are now ready to prove \cref{thm:embed}.

\restate{theorem}{thm:embed}

\begin{proof}
Let $n:=|V|$, $d:=d_w^G$, and $\Delta:=n^{-2}\sum_{u,v\in V}d(u,v)$.
If $h\leq 1$, then the graph is empty, so assume $h>1$.
If $\Delta=0$, then it suffices to embed all vertices to the same point, so assume $\Delta>0$.

Since every $n$-point metric admits a non-expansive embedding into the line with average distortion $\Oh(\log n)$~\cite[Theorem~3.1]{feige2008improved}, it suffices to prove a bound of $625h^2$.
Set $\mu(X):=\frac{|X|}{n}$ (for every $X\subseteq V$), $\epsilon:=\frac{13}{40h}$, $\eta:=\frac{1}{32h^2}$, and $\delta:=\frac{\Delta}{33}$.
Note that
\[(2h+2)\epsilon+\left(h+\binom{h}{2}\right)\eta = \frac{213(h+1)}{320h} < 1{.}\]
Apply \cref{thm:pure-pairs-subdiv-poly} with $H=K_h$ to get either some $x\in V$ with $|B(x,\frac{\Delta}{3})|>\eta n$, or disjoint sets $A,B\subseteq V$ with $|A|,|B|>\epsilon n$ and $d(A,B)>\frac{\Delta}{33}$.
By \cref{lemma:large-core} (with $\kappa:=\eta$) and \cref{lemma:well-spread} (with $\kappa:=\epsilon$), the endpoint-light vertex-weighted shortest-path metric $d_w^G$ admits a non-expansive embedding into the line with average distortion at most
\[\max\left\{\frac{6}{\eta},\,\frac{33}{\epsilon^2}\right\} = \max\left\{192h^2,\,\frac{52800}{169}h^2\right\} < 313h^2{.}\]

Since $d_w^G\leq\dbar_w^G\leq 2d_w^G$, the same embedding function is non-expansive for $\dbar_w^G$ and has average distortion at most $\frac{2\cdot 52800}{169}h^2<625h^2$.
\end{proof}

Finally, we can prove our main theorem:

\begin{theorem}[{Extended restatement of \cref{thm:mainim}}]
Let\/ $G=(V,E)$ be an\/ $m$-edge graph excluding\/ $\Ksubdiv{h}$ as an induced minor.
Then\/ $G$ admits a\/ $\frac{2}{3}$-balanced vertex separator of size
\[\min\left\{\Oh(\log{|V|}),\,140000h^2\right\}\cdot h\cdot\sqrt{m}{.}\]
Furthermore, there is a deterministic polynomial-time algorithm that, given a graph\/ $G$, either obtains such a balanced separator or concludes that\/ $G$ contains\/ $\Ksubdiv{h}$ as an induced minor.
In the latter case, there is a second Las Vegas randomized polynomial-time algorithm that produces an induced minor model of\/ $\Ksubdiv{h}$ in\/ $G$.
\end{theorem}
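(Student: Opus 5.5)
We combine \cref{prop:kl-bound}, \cref{prop:sp-to-embed}, \cref{prop:embed-to-sep}, and \cref{thm:embed}, exactly as outlined in \cref{sec:standard-relax-embed-sep-results}. We apply them to induced subgraphs, since \cref{prop:embed-to-sep} needs the bound for every connected induced subgraph. The key observation is that excluding $\Ksubdiv{h}$ as an induced minor is inherited by induced subgraphs. So fix a connected induced subgraph $G'$ of $G$ with $n'$ vertices and $m'\leq m$ edges.

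\textbf{Bounding $Q(G')$.} We apply \cref{prop:kl-bound} with $H=\Ksubdiv{h}$. This graph has $n_H=h+\binom{h}{2}$ vertices and $m_H=2\binom{h}{2}$ edges, so $n_H+m_H\leq 2h^2$ for $h\geq 1$, and hence $\sqrt{n_H+m_H}\leq\sqrt2\,h$. This gives
\[Q(G')>\frac{(n')^2}{45\sqrt2\,h\sqrt{m'}}\geq\frac{(n')^2}{64h\sqrt m}{.}\]

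\textbf{From $Q$ to $\Qbar$ and to a separator.} By \cref{thm:embed}, every vertex weighting of $G'$ gives an endpoint-heavy metric $\dbar_w^{G'}$ with a non-expansive line embedding of average distortion $\alpha\leq\min\{\Oh(\log|V|),625h^2\}$. Here $\log|V'|\leq\log|V|$, so $\alpha$ is uniform over all $G'$. \Cref{prop:sp-to-embed} then gives
\[\Qbar(G')\geq Q(G')/\alpha\geq\frac{(n')^2}{64\alpha h\sqrt m}{.}\]
We now apply \cref{prop:embed-to-sep} with $\beta=64\alpha h\sqrt m$. It yields a $\frac23$-balanced separator of size $4\beta=256\alpha h\sqrt m$. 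With $\alpha\leq 625h^2$ this is at most $160000h^3\sqrt m$, which misses the stated $140000$. To reach it, use the sharper constant $\frac{2\cdot52800}{169}h^2\approx 624.9h^2$ from the proof of \cref{thm:embed}, together with the exact bound $n_H+m_H=\frac{3h^2-h}{2}<1.5h^2$. This gives $4\cdot45\sqrt{1.5}\cdot 624.9\,h^3\sqrt m\approx 137800\,h^3\sqrt m$. The $\Oh(\log|V|)$ branch is analogous. If $m=0$, the connected graph is a single vertex and the claim is trivial.

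\textbf{Algorithm.} The main obstacle is making this effective, because \cref{prop:embed-to-sep} is existential. I would follow its proof, which I expect to be a Leighton--Rao style recursion. On the current piece, solve the LP for $Q$, whose optimal weights $w$ are computable in polynomial time. Then run the algorithm of \cref{thm:embed} to get either an induced minor model, in which case we stop, or an embedding $f$. Then sweep level sets of $f$ to extract a sparse vertex cut, and iterate to accumulate a balanced separator.

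Whenever the $Q$ bound fails for some $G'$, we conclude that $\Ksubdiv{h}$ is an induced minor, since by \cref{prop:kl-bound} a graph excluding it satisfies the bound. The Las Vegas part of \cref{prop:kl-bound} then produces the model. All steps are polynomial, and the deterministic algorithm needs only the LP, the algorithm of \cref{thm:embed}, and sweep cuts.
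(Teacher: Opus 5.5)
Your proposal is correct and matches the paper's proof. Both apply \cref{prop:kl-bound} with $n_H+m_H=\frac{3h^2-h}{2}$ to each connected induced subgraph, pass to $\Qbar$ via \cref{thm:embed} and \cref{prop:sp-to-embed} (using $m'\leq m$ and $\log|V'|\leq\log|V|$ for uniformity), and finish with \cref{prop:embed-to-sep}, whose threshold rounding and iterative peeling give the algorithm you sketched. On constants, the paper just rounds $45\sqrt{1.5}\approx 55.1$ up to $56$ and uses $625h^2$ directly, giving $4\cdot 56\cdot 625=140000$; so only your switch from $2h^2$ to $1.5h^2$ is needed, not the sharper $624.9$.
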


\begin{proof}
Let $n:=|V|$.
If $m=0$, then there is nothing to prove.

Let $F$ be a connected induced subgraph of $G$ with $n'\geq2$ vertices and $m'$ edges, so $F$ is also $\Ksubdiv{h}$-induced-minor-free.
By \cref{thm:embed} and \cref{prop:sp-to-embed},
\[Q(F) \leq \min\left\{\Oh(\log n),\,625h^2\right\}\Qbar(F){.}\]
By \cref{prop:kl-bound}, applied with $H=\Ksubdivnosmash{h}$,
\[Q(F) > \frac{(n')^2}{45\sqrt{|V(\Ksubdivnosmash{h})|+|E(\Ksubdivnosmash{h})|}\sqrt{m'}} = \frac{(n')^2}{45\sqrt{h+3\binom{h}{2}}\sqrt{m'}} = \frac{(n')^2}{45\sqrt{\frac{3}{2}h^2-\frac{1}{2}h}\sqrt{m'}} \geq \frac{(n')^2}{56h\sqrt m}{.}\]
Hence,
\[\Qbar(F) \geq \frac{(n')^2}{\min\left\{\Oh(\log n),\,35000h^2\right\}\cdot h\cdot\sqrt m}{.}\]
Finally, \cref{prop:embed-to-sep} yields the result.
\end{proof}

\section{Padded Decompositions of Region Intersection Graphs}

In this section, we prove \cref{thm:pad-rig}.
Loosely speaking, to do this, our main tools are a recent bound on the asymptotic dimension (defined shortly) of unweighted region intersection graphs, and a new argument to apply this in a black-box manner to bound the Assouad-Nagata dimension (also defined shortly) of vertex-weighted shortest-path metrics in region intersection graphs.
To complete the proof, we also require some existing tools connecting Assouad-Nagata dimension and padded decompositions.

For a metric space $(X,d)$ and sets $A,B\subseteq X$, denote $d(A,B):=\min\{d(a,b)\colon a\in A,\:b\in B\}$.
A family $\calU$ of subsets of $X$ is said to be \defn{$\rho$-disjoint} if $d(U,U')>\rho$ for all distinct $U,U'\in\calU$.
It is said to be \defn{$(k,\rho)$-disjoint} if it can be written as a disjoint union of subfamilies $\calU=\calU_1\cup\cdots\cup\calU_k$, where each $\calU_i$ is $\rho$-disjoint.
A function $f\colon\RR_{>0}\to\RR_{>0}$ is a \defn{$k$-dimensional control function} for $(X,d)$ if, for every $\rho>0$, there is a $(k+1,\rho)$-disjoint partition of $X$ into sets of diameter at most $f(\rho)$.
The \defn{asymptotic dimension} of $(X,d)$ is the minimum $k$ for which there is some $k$-dimensional control function.
The \defn{Assouad-Nagata dimension} of $(X,d)$ is the minimum $k$ for which there is some constant $c>0$ such that $\rho\mapsto c\rho$ is a $k$-dimensional control function.
Such a function is called a \defn{dilation}.

We will also use the notion of a sparse cover.
Like padded decompositions, there are many (equivalent) definitions of sparse covers, so we use the one given by Conroy and Filtser~\cite{conroy2025protect}.
For a metric space $(X,d)$, we say a collection $\calC$ of subsets of $X$ is a \defn{$(\beta,s,\Delta)$-sparse cover} if every set in $\calC$ has diameter at most $\Delta$, every ball of radius $\Delta/\beta$ is fully contained in (``covered by'') some set of $\calC$, and every point of $X$ belongs to at most $s$ sets of $\calC$.
We call $\beta$ the \defn{padding}, and $s$ the \defn{sparsity}.
We say that $(X,d)$ admits a \defn{$(\beta,s)$-sparse cover scheme} if it admits a $(\beta,s,\Delta)$-sparse cover for every $\Delta>0$.

\begin{proposition}
\label{prop:an-to-sparse}
Let\/ $(X,d)$ be a finite metric space.
If\/ $\rho\mapsto c\rho$ is a\/ $k$-dimensional control function for\/ $(X,d)$, then\/ $(X,d)$ admits a\/ $(2c+2,k+1)$-sparse cover scheme.
\end{proposition}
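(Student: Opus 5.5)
Given $\Delta>0$, we build a $(2c+2,k+1,\Delta)$-sparse cover directly from a control partition at a suitable scale. Set $\rho:=\Delta/(2c+2)$, so the balls to be covered are $B_d(x,\rho)$.

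\textbf{Step 1 (the partition).} By hypothesis there is a $(k+1,\rho)$-disjoint partition $\calU=\calU_1\cup\cdots\cup\calU_{k+1}$ of $X$. Each part has diameter at most $c\rho$, and each $\calU_i$ is $\rho$-disjoint.

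\textbf{Step 2 (the cover).} We enlarge every part $U\in\calU$ to $U^+:=B(U,\rho)=\{y\in X\colon d(U,y)\leq\rho\}$ and take $\calC:=\{U^+\colon U\in\calU\}$.

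\textbf{Step 3 (diameter).} If $y,y'\in U^+$, pick $u,u'\in U$ with $d(y,u)\leq\rho$ and $d(y',u')\leq\rho$. Then $d(y,y')\leq\rho+c\rho+\rho=(c+2)\rho$. This is at most $(2c+2)\rho=\Delta$, so every set in $\calC$ has diameter at most $\Delta$.

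\textbf{Step 4 (padding).} For any $x\in X$, let $U$ be the part containing $x$. Then $B_d(x,\rho)\subseteq B(U,\rho)=U^+$. Hence every ball of radius $\Delta/(2c+2)=\rho$ is covered by some set of $\calC$.

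\textbf{Step 5 (sparsity).} Fix a point $y$ and a subfamily $\calU_i$. If $y\in U^+$ and $y\in U'^+$ for distinct $U,U'\in\calU_i$, we get $u\in U$, $u'\in U'$ with $d(u,y)\leq\rho$ and $d(u',y)\leq\rho$. Thus $d(U,U')\leq 2\rho$, which does \emph{not} yet contradict $\rho$-disjointness. This is the one delicate point: we need the partition to be $2\rho$-disjoint.

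To fix it, instead take a $(k+1,2\rho)$-disjoint partition. Its parts have diameter at most $2c\rho$, and we again enlarge each part by $\rho$. Each enlarged set then has diameter at most $2c\rho+2\rho=(2c+2)\rho=\Delta$, which matches the stated padding $2c+2$ exactly. Padding still holds, since the $\rho$-ball around $x$ lies in the enlargement of $x$'s part. Sparsity now holds: two enlargements from the same $\calU_i$ meeting at $y$ would force $d(U,U')\leq 2\rho$, contradicting $2\rho$-disjointness. So $y$ lies in at most one set per subfamily, hence in at most $k+1$ sets in total.

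As $\Delta>0$ was arbitrary, this gives a $(2c+2,k+1)$-sparse cover scheme. Finiteness of $X$ guarantees that all minima defining $d(U,\cdot)$ are attained. The only real obstacle is choosing the scale ($2\rho$ rather than $\rho$) so that diameter, padding and sparsity all balance to give exactly $2c+2$.
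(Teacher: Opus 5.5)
Your final argument is correct and is essentially the paper's proof: take a $(k+1,2\rho)$-disjoint partition into parts of diameter at most $2c\rho$ with $\rho=\Delta/(2c+2)$, enlarge each part by $\rho$, and get diameter $2c\rho+2\rho=\Delta$, padding because each $\rho$-ball lies in its part's enlargement, and sparsity from $2\rho$-disjointness within each subfamily. In a write-up, drop the initial $\rho$-disjoint attempt and present only the corrected scale.
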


\begin{proof}
Fix some $\Delta>0$, and set $r:=\Delta/(2c+2)$.
Let $\calU=\calU_1\cup\cdots\cup\calU_{k+1}$ be a $(k+1,2r)$-disjoint partition of $X$ into sets of diameter at most $2cr$.
For each $U\in\calU$, let $U^+:=\{x\in X\colon d(x,U)\leq r\}$.
Let $\calC:=\{U^+\colon U\in\calU\}$.
Every set of $\calC$ has diameter at most $2cr+2r=\Delta$.
If $x\in U$, then $B_d(x,r)\subseteq U^+$, so every ball of radius $r=\Delta/(2c+2)$ is contained in some set of $\calC$.
Finally, each point in $X$ belongs to at most one enlarged set coming from each family $\calU_i$, since otherwise two distinct sets of $\calU_i$ would be at distance at most $2r$.
Thus, every point in $X$ belongs to at most $k+1$ sets in $\calC$.
\end{proof}

\begin{proposition}[{\cite[Theorem~4]{conroy2025protect}}]
\label{prop:sparse-to-pad}
Every metric space that admits a\/ $(\beta,s)$-sparse cover scheme also admits an\/ $\bigl(8\beta(1+\ln s),\frac{1}{4\beta}\bigr)$-padded decomposition scheme.
\end{proposition}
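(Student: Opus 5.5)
The plan is to build the padded decomposition at scale $\Delta$ directly from a $(\beta,s,\Delta)$-sparse cover $\calC$, by a randomized ``first cluster that claims you'' rule with random thresholds, in the spirit of the exponential-clock clustering of Calinescu--Karloff--Rabani and Filtser.

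\textbf{Construction.} For each $C\in\calC$ let $g_C(x):=d(x,X\setminus C)$, with $g_C(x)=\infty$ if $C=X$. Each $g_C$ is $1$-Lipschitz, and $g_C(x)>0$ exactly when $x\in C$. The covering property gives, for every $x$, some $C$ with $B_d(x,\Delta/\beta)\subseteq C$, and hence $g_C(x)\ge\Delta/\beta$. Draw independently for each $C$ a threshold $R_C\in[\frac{\Delta}{2\beta},\frac{\Delta}{\beta}]$ from a truncated exponential distribution with rate $\lambda=\Theta(\beta(1+\ln s)/\Delta)$. Also draw a uniformly random ordering of $\calC$. Assign each $x$ to the first $C$ in the order with $g_C(x)\ge R_C$. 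Such a $C$ always exists, because the covering cluster has $g_C(x)\ge\Delta/\beta\ge R_C$. Every point assigned to $C$ lies in $C$, so every part has diameter at most $\Delta$.

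\textbf{Locality.} Fix $x$ and $\gamma\le\delta=\frac{1}{4\beta}$, so that $\gamma\Delta\le\frac{\Delta}{4\beta}<\frac{\Delta}{2\beta}\le R_C$. Suppose some $y\in B_d(x,\gamma\Delta)$ satisfies $g_C(y)\ge R_C$. By the Lipschitz property, $g_C(x)\ge R_C-\gamma\Delta>0$, so $x\in C$. Consequently only the at most $s$ clusters containing $x$ can claim any point of the ball. This is exactly why the thresholds are bounded below by $\frac{\Delta}{2\beta}$ and why the probability parameter is $\delta=\frac{1}{4\beta}$.

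\textbf{Padding event.} Again by the Lipschitz property, the whole ball $B_d(x,\gamma\Delta)$ goes to the same part as $x$ whenever two conditions hold. First, the cluster $C^\ast$ that claims $x$ satisfies $g_{C^\ast}(x)\ge R_{C^\ast}+\gamma\Delta$. Second, every cluster $C'$ containing $x$ that precedes $C^\ast$ in the order satisfies $g_{C'}(x)<R_{C'}-\gamma\Delta$. Both conditions concern only the at most $s$ values $g_C(x)$ for clusters $C\ni x$.

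\textbf{Main obstacle.} The remaining, and hardest, step is to show this event has probability at least $e^{-8\beta(1+\ln s)\gamma}$. I would first reparametrise by sorting the clusters so that the order is effectively determined by the thresholds, using the shifted-value device of Filtser: let $C$ claim $x$ according to the largest value $g_C(x)-R_C$. The padding event then becomes ``the largest value of $g_C(x)-R_C$ exceeds the second largest by more than $2\gamma\Delta$.'' For i.i.d.\ exponentials, the memoryless property bounds the probability of a gap smaller than $2\gamma\Delta$ by $1-e^{-2\lambda\gamma\Delta}$. Truncating to an interval of length $\frac{\Delta}{2\beta}$ costs an additive error of order $s\,e^{-\lambda\Delta/(2\beta)}$. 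Choosing $\lambda\Delta\approx 4\beta(1+\ln s)$ makes this error negligible against the main term, and yields the bound $e^{-8\beta(1+\ln s)\gamma}$ after absorbing constants; fixing those constants precisely is the delicate part. If the truncation error cannot be absorbed for very small $\gamma$, the fallback is to condition on the event that the top threshold lies in the interior of the interval, and to handle the complementary event by a direct union bound over the $s$ clusters.
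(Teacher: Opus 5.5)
The paper does not reprove this proposition; it imports Theorem~4 of Conroy and Filtser and reads the constants off their proof. Judged on its own, your construction, the locality argument and the sufficient condition for padding are all fine. The step that carries the content, however, is the probability estimate, and it is not done; the route you sketch for it cannot work.

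First, ``let $C$ claim $x$ according to the largest value $g_C(x)-R_C$'' is not a reparametrisation of your random-order/first-claimant rule. It is a different partition, and for it the required bound is false near $\gamma=\delta$. Take a finely discretised long segment of the line, $r=\Delta/\beta$ with $\beta\ge 20$, and a valid sparse cover containing $C_0=[-11r,9r]$ and $C_1=[-9r,11r]$ whose other clusters avoid $[-8r,8r]$ (sparsity $4$ is easy to arrange). Near $x=0$ we have $g_{C_0}(y)\approx 9r-y$ and $g_{C_1}(y)\approx 9r+y$. So the argmax rule sends $y$ to $C_0$ iff $y<(R_{C_1}-R_{C_0})/2$. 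Both thresholds lie in an interval of length $\frac{r}{2}=2\delta\Delta$, so this cut point lies in $(-\frac r4,\frac r4)$ almost surely. Hence $B_d(0,\delta\Delta)$ is padded with probability tending to $0$ as the discretisation is refined, whereas the statement demands $e^{-2(1+\ln s)}>0$. No choice of $\lambda$ helps. Your original random order pads this ball with probability $1$, since whichever of $C_0,C_1$ comes first claims all of it. So the random order does real work for large $\gamma$ and must not be discarded.

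Second, even for small $\gamma$ your accounting cannot close. With $\lambda\Delta=4\beta(1+\ln s)$, the untruncated bound $e^{-2\lambda\gamma\Delta}$ is already exactly $e^{-8\beta(1+\ln s)\gamma}$, so there is no slack left to absorb any loss. Moreover, an additive $\gamma$-independent loss such as $s\,e^{-\lambda\Delta/(2\beta)}$ can never be absorbed, because the target tends to $1$ as $\gamma\to 0$. The truncation must instead be charged multiplicatively. For $E$ exponential with rate $\lambda$ truncated to $[0,L]$, with $L=\frac{\Delta}{2\beta}$ and $R_C=\frac{\Delta}{\beta}-E_C$, one has $\Pr[E\in[a,a+c)]\le(1-e^{-\lambda c})\bigl(\Pr[E\ge a]+\frac{1}{e^{\lambda L}-1}\bigr)$. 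Summing over the first cluster, in any fixed order, that claims a point of the ball gives $\Pr[\text{ball cut}]\le(1-e^{-2\lambda\gamma\Delta})\bigl(1+\frac{s}{e^{\lambda L}-1}\bigr)$. This yields a bound of the form $e^{-\Oh(\beta\log s)\gamma}$ only while $\lambda\gamma\Delta=\Oh(1)$, i.e.\ $\gamma=\Oh(1/(\beta\log s))$. Reaching $\delta=\frac{1}{4\beta}$ needs a separate argument that uses the random order. For example, with probability at least $1/s$ the covering cluster of $x$ precedes all other clusters containing $x$. The constants then have to be balanced across the two regimes, which your sketch leaves open.

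Finally, the memoryless gap bound you quote requires the orientation $R_C=\frac{\Delta}{\beta}-E_C$. With the exponential tail pointing the other way, the top gap behaves like a minimum of up to $s-1$ exponentials, and you lose a factor of $s$.
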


We note that Conroy and Filtser stated this proposition in terms of edge-weighted shortest-path metrics, and left the specification of the constants in their proof~\cite[Proof of Theorem~4]{conroy2025protect}.

Recently, Davies~\cite{davies2025string} proved that unweighted string graphs have asymptotic dimension at most $2$.
More generally, Abrishami, Briański, Davies, Du, Masaříková, Rzążewski, and Walczak \cite[Theorem~1.3]{abrishami2026burling} proved that unweighted region intersection graphs over $K_h$-minor-free graphs have asymptotic dimension at most $2^{h-2}-1$.
In the process, they proved the following key result that we will use.

\begin{proposition}[{\cite[Theorem~5.2 of the arXiv version]{abrishami2026burling}}]
\label{prop:rig-control}
For\/ $h\geq 2$, let\/ $G=(V,E)$ be a region intersection graph over a\/ $K_h$-minor-free graph\/ $G_0$.
Let\/ $d^G$ be the unweighted shortest-path metric in\/ $G$ (where the length of an edge is exactly\/ $1$).
Then, for every\/ $\delta\geq 0$, $V$ admits a\/ $(2^{h-2},\delta)$-disjoint partition into sets of radius at most\/ $2(h-1)\delta+1$ in\/ $d^G$.
\end{proposition}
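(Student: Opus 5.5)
The plan is induction on $h$, combining a breadth-first layering of $G$ with the usual contraction trick for $K_h$-minor-free graphs: each step of the induction should double the number of colour classes and add $2\delta$ to the radius. This matches the bounds $2^{h-2}$ and $2(h-1)\delta+1$. Throughout, let $R(v)\subseteq V(G_0)$ denote the connected region representing a vertex $v\in V$.

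\emph{Base case $h=2$.} A $K_2$-minor-free graph $G_0$ has no edges. So every region is a single vertex of $G_0$, and two vertices of $G$ are adjacent exactly when their regions coincide. Hence $G$ is a disjoint union of cliques. Its components form a single family whose members are at infinite (hence $>\delta$) distance from each other, and each has radius at most $1\leq 2\delta+1$.

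\emph{Inductive step.} Work in one component of $G$ and fix a root $r$. Let $L_i$ be the set of vertices at $d^G$-distance $i$ from $r$. Cut the layers into annuli $S_j:=\bigcup\{L_i : j\delta'\leq i<(j+1)\delta'\}$ of width $\delta'\approx\delta+1$. Any two annuli of the same parity are then at distance $>\delta$, so odd annuli form one colour block and even annuli another; this accounts for the factor $2$ in $2^{h-2}=2\cdot 2^{h-3}$. Inside a fixed annulus $S_j$, consider a component $D$ of $G\bigl[\bigcup_{i\geq j\delta'}L_i\bigr]$. The union $W$ of regions of vertices in layers $<j\delta'$ is connected in $G_0$, and it touches $U_D:=\bigcup_{v\in D}R(v)$. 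The part of $G_0$ spanned by $U_D$ is therefore $K_{h-1}$-minor-free, since otherwise contracting $W$ to a single vertex would give a $K_h$ minor. Consequently $G[D\cap S_j]$ is a region intersection graph over a $K_{h-1}$-minor-free graph. Applying the induction hypothesis to it yields a $(2^{h-3},\delta)$-disjoint partition into sets of radius at most $2(h-2)\delta+1$. Distances in the subgraph dominate $d^G$, so the radius bounds transfer to $G$. Different components $D$ in the same annulus have disjoint region-unions and are far apart within the outer part, so combining annulus parity with the inductive colouring gives $2\cdot 2^{h-3}$ classes.

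\emph{Main obstacle.} The separation guarantee is the hard step. The induction hypothesis gives $\delta$-disjointness measured inside $G[D\cap S_j]$, but two pieces can be at $d^G$-distance $\leq\delta$ via a short path that leaves the annulus. Such a path may dip into lower layers, possibly into the contracted inner part, or pass through a neighbouring component $D'$. A path of length at most $\delta$ can only drift $\delta$ layers, so the first thing I would try is this: apply the induction to a thickened annulus (layers extended by $\delta$ on each side), restrict the resulting partition back to $S_j$, and then merge pieces that are joined by a short path through the lower layers. Each merge should be attachable along a path of length at most $\delta$ on each side of a piece, which is where the extra $2\delta$ in the radius $2(h-2)\delta+1+2\delta=2(h-1)\delta+1$ should come from. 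Making this merging rigorous is the crux. In particular, I need to show that merges do not cascade, for instance by performing them relative to a fixed inner piece or an ordering by layer. I also need to check that merging preserves the $K_{h-1}$-minor-freeness used in the inductive call, by making sure the inner region $W$ is still available to be contracted.
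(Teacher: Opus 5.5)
The paper does not prove this proposition; it imports it from \cite{abrishami2026burling}. Judged on its own, your proposal has a genuine gap, and it comes before the step you flag. The inductive step needs $G[D\cap S_j]$ to be a region intersection graph over a $K_{h-1}$-minor-free graph, and the contraction argument does not give this. To turn a $K_{h-1}$ minor in $G_0[U_D]$ into a $K_h$ minor, the contracted set $W$ would have to be disjoint from all $h-1$ branch sets and touch each of them. You only know that $W$ meets $U_D$ somewhere; in fact $W$ overlaps the regions of the vertices in layer $j\delta'$, so it cannot be a separate branch set at all.

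The claim is false, so the induction hypothesis cannot be invoked:
\begin{itemize}
\item For $h=3$, take a path $r=p_0,\dots,p_{\delta'-1}$ and join $p_{\delta'-1}$ to the three vertices of an induced path $abc$. This graph is chordal, hence a region intersection graph over a tree. But its piece $G[D\cap S_1]$ is the induced $P_3$ on $\{a,b,c\}$. That is not a disjoint union of cliques, so it is not a region intersection graph over an edgeless graph.
\item For $h=5$, a large grid is a region intersection graph over its planar $1$-subdivision. Once $\delta$ exceeds a small constant, an annulus of BFS layers from a corner contains an induced $6\times 6$ grid. A triangle-free region intersection graph over a graph of treewidth at most $2$ has treewidth at most $5$, because each point of the host graph lies in at most two regions. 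So no such piece is a region intersection graph over a $K_4$-minor-free graph.
\end{itemize}
The classical contraction trick only shows that a single BFS layer of a $K_h$-minor-free graph is $K_{h-1}$-minor-free; there every layer vertex is adjacent to the contracted ball. It fails for thick annuli, and for region intersection graphs it fails even for single layers, as the $h=3$ example shows.

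Conversely, the obstacle you single out is the easy part and needs no merging. If $u,v\in S_j$ satisfy $d^G(u,v)\leq\delta$, every vertex of a shortest $u$--$v$ path lies within $\lfloor\delta/2\rfloor$ layers of $S_j$. Hence any $(k,\delta)$-disjoint partition of the thickened annulus, with respect to its own induced metric, restricts to a $(k,\delta)$-disjoint partition of $S_j$ with respect to $d^G$, and diameters can only shrink. The thickening is also needed because distinct components $D,D'$ in the same annulus need not be far apart in $d^G$: they can be at distance $2$ through a common neighbour in layer $j\delta'-1$. So your per-component treatment is wrong as stated.

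However, thickened pieces are no more in the $(h-1)$-class than the original ones. What is missing is an inductive statement that survives the layering step. For example, one could carry the previously contracted connected sets along and extract a $K_h$ minor of $G_0$ from any piece of large radius, in the spirit of Klein--Plotkin--Rao. Asserting that annulus pieces are region intersection graphs over $K_{h-1}$-minor-free graphs does not work.
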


Note that the radius bound can become a diameter bound of $4(h-1)\delta+2$.
We will only need the case where $\delta=1$.

We can turn this result for unweighted graphs into a dilation for the vertex-weighted metrics as follows.

\begin{proposition}
\label{prop:weighted-rig-an}
Let\/ $h\geq 2$, let\/ $G=(V,E)$ be a region intersection graph over another graph\/ $G_0$, and assume\/ $G_0$ excludes\/ $K_h$ as a minor.
Then, for every assignment of vertex weights\/ $w\colon V\to\RR_{\geq 0}$, the metrics\/ $d_w^G$ and\/ $\dbar_w^G$ each have Assouad-Nagata dimension at most\/ $2^{h-2}-1$, via the respective\/ $(2^{h-2}-1)$-dimensional control functions\/ $\rho\mapsto(12h-6)\rho$ and\/ $\rho\mapsto(16h-8)\rho$.
\end{proposition}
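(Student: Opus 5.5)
The plan is to reduce to \cref{prop:rig-control} with $\delta=1$, applied to an auxiliary unweighted region intersection graph over the same $G_0$. This graph is chosen so that its unweighted metric is bi-Lipschitz to $d_w^G$ at scale $\rho$. Fix $\rho>0$ and work first with $d=d_w^G$.

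\emph{Heavy vertices.} Let $Z=\{v\in V\colon w(v)>2\rho\}$. In the endpoint-light metric every path leaving or entering $v\in Z$ pays at least $w(v)/2>\rho$, so $d(v,u)>\rho$ for all $u\neq v$. Hence each $\{v\}$ with $v\in Z$ can be added as a singleton part (diameter $0$) to any one of the subfamilies without destroying $\rho$-disjointness. It remains to partition $L=V\setminus Z$. Any path of length at most $\rho$ between vertices of $L$ avoids $Z$. Therefore $d$ and the metric of $G[L]$ with the restricted weights agree on all distances up to $\rho$, and distances computed in $G[L]$ only dominate $d$. Also, $G[L]$ is still a region intersection graph over $G_0$.

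\emph{Auxiliary graph.} Let $R(u)\subseteq V(G_0)$ be the connected region of $u\in L$. For $v\in L$ set $R'(v)=\bigcup\{R(u)\colon u\in L,\ d_L(u,v)\leq\rho/2\}$. This set is connected, because a shortest path from $v$ to any such $u$ stays inside the ball and consecutive regions on it intersect. Let $G'$ be the intersection graph of $\{R'(v)\}_{v\in L}$. It is a region intersection graph over $G_0$, so \cref{prop:rig-control} applies to it (componentwise if it is disconnected). We need two comparisons.
\begin{itemize}
\item[(a)] If $d(u,v)\leq\rho$, then the balls of radius $\rho/2$ around $u$ and $v$ share a vertex, so $R'(u)\cap R'(v)\neq\emptyset$. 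Thus $uv\in E(G')$ or $u=v$.
\item[(b)] If $uv\in E(G')$, then there are $x,y$ with $d(u,x),d(v,y)\leq\rho/2$ and $R(x)\cap R(y)\neq\emptyset$. So $x=y$ or $xy\in E(G)$, giving $d(x,y)\leq\frac12(w(x)+w(y))\leq 2\rho$ and hence $d(u,v)\leq 3\rho$.
\end{itemize}
By the triangle inequality, (b) gives $d\leq 3\rho\cdot d^{G'}$.

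\emph{Conclusion.} Apply \cref{prop:rig-control} with $\delta=1$ to obtain a $(2^{h-2},1)$-disjoint partition of $L$ into sets of $G'$-diameter at most $4(h-1)+2=4h-2$. By (a), two sets at $G'$-distance greater than $1$ are at $d$-distance greater than $\rho$, so each subfamily is $\rho$-disjoint in $d$. By (b), each part has $d$-diameter at most $3\rho(4h-2)=(12h-6)\rho$. Together with the singletons from $Z$, this gives a $(2^{h-2},\rho)$-disjoint partition with dilation $(12h-6)$, so the dimension is at most $2^{h-2}-1$.

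\emph{Endpoint-heavy metric.} For $\dbar_w^G$ the same construction applies with $Z=\{v\colon w(v)>\rho\}$ and balls of radius $\rho/2$; these heavy vertices are at $\dbar$-distance $>\rho$ from everything else. Step (a) is unchanged. The point where care is needed is (b): the ball-joining now costs up to $\rho/2+\rho/2$ plus the edge $xy$. For $x\neq y$ the sum $w(x)+w(y)$ is partially double-counted with the balls, so one should bound $\dbar(u,v)\leq\dbar(u,x)+\dbar(y,v)\leq 2\rho$ directly, using that the endpoint-heavy length of the concatenated path is at most the sum of the two path lengths. A generous bound of $4\rho$ per $G'$-edge yields $(16h-8)\rho$. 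The main obstacle I expect is exactly this bookkeeping in (b), and checking that distances computed inside $G[L]$ versus $G$ do not break (a) or (b).
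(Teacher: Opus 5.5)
Your overall plan is the paper's: singleton parts for heavy vertices, thickening each light region by a metric ball, applying \cref{prop:rig-control} with $\delta=1$ to the thickened region intersection graph, and transferring back via (a) and (b). Your endpoint-light constants also match.

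However, your justification of (a) is wrong as written. In a vertex-weighted metric, two vertices at distance at most $\rho$ need not have intersecting balls of radius $\rho/2$. For a single edge $uv$ with $w(u)=w(v)=\rho$, we have $d(u,v)=\rho$, but $B(u,\rho/2)=\{u\}$ and $B(v,\rho/2)=\{v\}$. The conclusion of (a) still holds, by the argument the paper uses. Take a shortest path $u=v_0,\dots,v_t=v$ and let $i$ be maximal with $d(u,v_i)\le\rho/2$. If $i=t$, then $R(v)\subseteq R'(u)\cap R'(v)$. Otherwise $d(v_{i+1},v)=d(u,v)-d(u,v_{i+1})<\rho/2$, and $R(v_i)\cap R(v_{i+1})\neq\emptyset$ because $v_iv_{i+1}\in E$.

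The genuine gap is in the endpoint-heavy case, where you say (a) is unchanged with balls of radius $\rho/2$. There it is false, because an interior vertex of a path is charged in full to both halves. For $z$ in the interior of a shortest $u$--$v$ path, $\dbar(u,z)+\dbar(z,v)=\dbar(u,v)+w(z)$.

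Concretely, take $G$ to be the path $u,z,v$, with $w(u)=w(v)=0$ and $w(z)=\rho$. This is a region intersection graph over a single edge, hence over a $K_3$-minor-free graph. Then $z\notin Z$ and $\dbar(u,v)=\rho$, but every $\dbar$-ball of radius $\rho/2$ is a singleton. So $R'(u)=R(u)$ and $R'(v)=R(v)$ are disjoint, $uv\notin E(G')$, and $G'=G$. For $h=3$, \cref{prop:rig-control} may legitimately return $\{u\},\{v\}$ in one subfamily and $\{z\}$ in the other. That first subfamily is not $\rho$-disjoint in $\dbar_w^G$, so your argument does not establish the control function.

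The fix is what the paper does: use balls of radius $\rho$ in the endpoint-heavy case. Then (a) is immediate, since $\dbar(u,v)\le\rho$ gives $R(v)\subseteq R'(u)\cap R'(v)$. Step (b) becomes $\dbar(u,v)\le\rho+2\rho+\rho=4\rho$ by the triangle inequality, which is exactly where $(16h-8)\rho$ comes from. Your concatenation accounting would even give $2\rho$ here. So the delicate step is (a), not the bookkeeping in (b) that you anticipated.
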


\begin{proof}
Fix $\rho>0$.
For each $x\in V$, let $R_x$ denote the corresponding connected set of vertices (``region'') in $G_0$.
Whenever we take the union of the regions corresponding to a ball in either $d_w^G$ or $\dbar_w^G$, the resulting set is connected in $G_0$, because every such ball induces a connected subgraph of $G$.
We will use this fact to ``grow'' regions, similar to the proof of \cref{prop:an-to-sparse}:

\begin{claim}
Set\/ $\tau:=3$ and\/ $\overline\tau:=4$.

\smallskip

\noindent
\emph{Endpoint-light case:}
Let\/ $L:=\{v\in V\colon w(v)>2\rho\}$ and\/ $S:=V\setminus L$.
For each\/ $x\in S$, let\/ $R_x^\rho:=\bigcup_{y\colon d_w^G(x,y)\leq\rho/2}R_y$, and let\/ $G^\rho$ be the region intersection graph of\/ $\{R_x^\rho:x\in S\}$ over\/ $G_0$.
Then, for all distinct\/ $x,y\in S$, if\/ $d_w^G(x,y)\leq\rho$, then\/ $xy\in E(G^\rho)$, and if\/ $xy\in E(G^\rho)$, then\/ $d_w^G(x,y)\leq\tau\rho$.

\smallskip

\noindent
\emph{Endpoint-heavy case:}
Let\/ $L:=\{v\in V\colon w(v)>\rho\}$ and\/ $S:=V\setminus L$.
For each\/ $x\in S$, let\/ $\Rbar_x^\rho:=\bigcup_{y\colon\dbar_w^G(x,y)\leq\rho}R_y$, and let\/ $\Gbar^\rho$ be the region intersection graph of\/ $\{\Rbar_x^\rho\colon x\in S\}$ over\/ $G_0$.
Then, for all distinct\/ $x,y\in S$, if\/ $\dbar_w^G(x,y)\leq\rho$, then\/ $xy\in E(\Gbar^\rho)$, and if\/ $xy\in E(\Gbar^\rho)$, then\/ $\dbar_w^G(x,y)\leq\overline\tau\rho$.
\end{claim}

Assume the claim for the moment.

We first handle the endpoint-light metric.
Let $L$, $S$, and $G^\rho$ be as in the endpoint-light case of the claim.
By \cref{prop:rig-control}, the graph $G^\rho$ admits a $(2^{h-2},1)$-disjoint partition into sets of radius at most $2h-1$ in $d^{G^\rho}$.
By the claim, each subfamily in this partition is $\rho$-disjoint in $d_w^G$ (under the direct correspondence of $V(G^\rho)$ and $S$).
Also, every part has diameter at most $4h-2$ in $d^{G^\rho}$, and every edge of $G^\rho$ spans $d_w^G$-distance at most $\tau\rho$, so every part has diameter at most $(4h-2)\tau\rho=(12h-6)\rho$ in $d_w^G$.
Finally, add every vertex of $L$ as a singleton to any one fixed subfamily in the disjoint partition.
This subfamily remains $\rho$-disjoint, since every vertex in $L$ has $d_w^G$-distance greater than $\rho$ from every other vertex of $G$.
Therefore, $\rho\mapsto(12h-6)\rho$ is a $(2^{h-2}-1)$-dimensional control function for $d_w^G$.

We next handle the endpoint-heavy metric with a symmetric argument.
Let $L$, $S$, and $\Gbar^\rho$ be as in the endpoint-heavy case of the claim.
By \cref{prop:rig-control}, the graph $\Gbar^\rho$ admits a $(2^{h-2},1)$-disjoint partition into sets of radius at most $2h-1$ in $d^{\Gbar^\rho}$.
By the claim, each subfamily in this partition is $\rho$-disjoint in $\dbar_w^G$ (under the direct correspondence of $V(\Gbar^\rho)$ and $S$).
Also, every part has diameter at most $4h-2$ in $d^{\Gbar^\rho}$, and every edge of $\Gbar^\rho$ spans $\dbar_w^G$-distance at most $\overline\tau\rho$, so every part has diameter at most $(4h-2)\overline\tau\rho=(16h-8)\rho$ in $\dbar_w^G$.
Finally, add every vertex of $L$ as a singleton to any one fixed subfamily in the disjoint partition.
This subfamily remains $\rho$-disjoint, since every vertex in $L$ has $\dbar_w^G$-distance greater than $\rho$ from every other vertex of $G$.
Therefore, $\rho\mapsto(16h-8)\rho$ is a $(2^{h-2}-1)$-dimensional control function for $\dbar_w^G$.

It remains to prove the claim.

For the endpoint-light case, suppose first that $d_w^G(x,y)\leq\rho$ for distinct $x,y\in S$.
Let $P=v_0,\ldots,v_t$ be a shortest-path from $x=v_0$ to $y=v_t$ in the edge-weighted graph defining $d_w^G$, and let $i$ be maximal such that $d_w^G(x,v_i)\leq\rho/2$.
If $i=t$, then $y$ contributes to $R_x^\rho$, so $R_y\subseteq R_x^\rho\cap R_y^\rho$.
Otherwise, the suffix of $P$ from $v_{i+1}$ to $y$ has length less than $\rho/2$, so $v_i$ contributes to $R_x^\rho$ and $v_{i+1}$ contributes to $R_y^\rho$.
Since $v_iv_{i+1}\in E$, we have $R_{v_i}\cap R_{v_{i+1}}\neq\emptyset$, and therefore $R_x^\rho\cap R_y^\rho\neq\emptyset$.
Therefore, $xy\in E(G^\rho)$.

Now, suppose that $xy\in E(G^\rho)$.
Choose $a,b\in S$ with $d_w^G(x,a)\leq\rho/2$, $d_w^G(y,b)\leq\rho/2$, and $R_a\cap R_b\neq\emptyset$.
Then either $a=b$ or $ab\in E$.
Since $a,b\in S$, we have $d_w^G(a,b)\leq\frac{w(a)+w(b)}2\leq2\rho$.
Hence, $d_w^G(x,y)\leq d_w^G(x,a)+d_w^G(a,b)+d_w^G(b,y)\leq 3\rho=\tau\rho$.

For the endpoint-heavy case, suppose first that $\dbar_w^G(x,y)\leq\rho$ for distinct $x,y\in S$.
Then $y$ contributes to $\Rbar_x^\rho$, so $R_y\subseteq\Rbar_x^\rho\cap\Rbar_y^\rho$, and hence $xy\in E(\Gbar^\rho)$.

Now suppose that $xy\in E(\Gbar^\rho)$.
Choose $a,b\in S$ with $\dbar_w^G(x,a)\leq\rho$, $\dbar_w^G(y,b)\leq\rho$, and $R_a\cap R_b\neq\emptyset$.
Then either $a=b$ or $ab\in E$.
Since $a,b\in S$, we have $\dbar_w^G(a,b)\leq w(a)+w(b)\leq 2\rho$.
Hence $\dbar_w^G(x,y)\leq\dbar_w^G(x,a)+\dbar_w^G(a,b)+\dbar_w^G(b,y)\leq 4\rho=\overline\tau\rho$.
\end{proof}

\begin{theorem}[{Extended restatement of \cref{thm:pad-rig}}]
Let\/ $G_0$ be a\/ $K_h$-minor-free graph, $G=(V,E)$ be a region intersection graph over\/ $G_0$, and\/ $w\colon V\to\RR_{\geq 0}$ be an assignment of vertex weights.
Then the endpoint-light vertex-weighted shortest-path metric\/ $d_w^G$ admits a\/ $\bigl(16(12h-5)\cdot\break(1+(h-2)\ln 2),\frac{1}{8(12h-5)}\bigr)$-padded decomposition scheme, and the endpoint-heavy vertex-weighted shortest-path metric\/ $\dbar_w^G$ admits a\/ $\bigl(16(16h-7)(1+(h-2)\ln 2),\frac{1}{8(16h-7)}\bigr)$-padded decomposition scheme.
In particular, every string graph admits an\/ $(\Oh(1),\Omega(1))$-padded decomposition scheme.
\end{theorem}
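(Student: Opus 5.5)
The proof is a direct chaining of \cref{prop:weighted-rig-an}, \cref{prop:an-to-sparse}, and \cref{prop:sparse-to-pad}, followed by tracking constants.

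First assume $h\geq 2$. For $h\leq 1$ the graph $G_0$, and hence $G$, is empty, so the statement is vacuous.

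\textbf{Endpoint-light metric.} By \cref{prop:weighted-rig-an}, $\rho\mapsto(12h-6)\rho$ is a $k$-dimensional control function for $d_w^G$ with $k=2^{h-2}-1$.

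\cref{prop:an-to-sparse} with $c=12h-6$ then gives a $(2c+2,k+1)$-sparse cover scheme, which is a $(24h-10,\,2^{h-2})$-sparse cover scheme. Writing $\beta:=24h-10=2(12h-5)$ and $s:=2^{h-2}$, we have $\ln s=(h-2)\ln 2$.

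\cref{prop:sparse-to-pad} then yields an $\bigl(8\beta(1+\ln s),\frac{1}{4\beta}\bigr)$-padded decomposition scheme. Substituting $\beta$ and $s$ gives
\[\Bigl(16(12h-5)\bigl(1+(h-2)\ln 2\bigr),\ \tfrac{1}{8(12h-5)}\Bigr),\]
which is exactly the claimed bound.

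\textbf{Endpoint-heavy metric.} The argument is identical with $c=16h-8$. This gives $2c+2=2(16h-7)$, and \cref{prop:sparse-to-pad} produces padding $16(16h-7)(1+(h-2)\ln 2)$ and probability $\frac{1}{8(16h-7)}$.

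\textbf{String graphs.} String graphs are exactly the region intersection graphs over planar graphs, and planar graphs are $K_5$-minor-free. Taking $h=5$ makes all constants absolute, so every string graph admits an $(\Oh(1),\Omega(1))$-padded decomposition scheme. More generally, for fixed $h$ the padding is $\Oh(h^2)$ and the probability is $\Omega(1/h)$, matching \cref{thm:pad-rig}.

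\textbf{Main obstacle.} Everything here is bookkeeping. The one point needing care is that \cref{prop:sparse-to-pad} was stated by Conroy and Filtser for edge-weighted shortest-path metrics, while we apply it to arbitrary finite metrics.

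For $d_w^G$ this is harmless, since it is already the edge-weighted shortest-path metric with edge weights $\frac12(w(u)+w(v))$.

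For $\dbar_w^G$ I would check that their proof uses only the metric axioms: a random choice of scale and order over the cover sets, then assigning each point to the first set whose ball it lies in. Failing that, I would realize $\dbar_w^G$ restricted to $V$ as a shortest-path metric on an auxiliary edge-weighted graph, namely the complete graph with edge weights $\dbar_w^G(u,v)$. That realization preserves distances exactly on $V$, so the padded decomposition scheme transfers.
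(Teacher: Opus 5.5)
Your proposal is correct and is essentially the paper's proof: chain \cref{prop:weighted-rig-an}, \cref{prop:an-to-sparse} and \cref{prop:sparse-to-pad}, with $2c+2=2(12h-5)$ (respectively $2(16h-7)$) and $\ln 2^{h-2}=(h-2)\ln 2$, exactly as you computed. Your extra remark about realizing $\dbar_w^G$ as the shortest-path metric of a complete edge-weighted graph is a harmless safeguard; the paper simply states \cref{prop:sparse-to-pad} for arbitrary metric spaces.
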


\begin{proof}
Assume $h\geq 2$, since otherwise $G$ is empty.
By \cref{prop:weighted-rig-an}, the metric $d_w^G$ has a $(2^{h-2}-1)$-dimensional control function $\rho\mapsto(12h-6)\rho$, while $\dbar_w^G$ has a $(2^{h-2}-1)$-dimensional control function $\rho\mapsto(16h-8)\rho$.
By \cref{prop:an-to-sparse}, this gives a $(24h-10,2^{h-2})$-sparse cover scheme for $d_w^G$ and a $(32h-14,2^{h-2})$-sparse cover scheme for $\dbar_w^G$.
By \cref{prop:sparse-to-pad}, $d_w^G$ admits a padded decomposition scheme with parameters $\bigl(8(24h-10)\bigl(1+\ln(2^{h-2})\bigr),\frac{1}{4(24h-10)}\bigr)=\bigl(16(12h-5)\bigl(1+(h-2)\ln 2\bigr),\frac{1}{8(12h-5)}\bigr)$, while $\dbar_w^G$ admits a padded decomposition scheme with parameters $\bigl(8(32h-14)\bigl(1+\ln(2^{h-2})\bigr),\frac{1}{4(32h-14)}\bigr)=\bigl(16(16h-7)\bigl(1+(h-2)\ln 2\bigr),\frac{1}{8(16h-7)}\bigr)$.
\end{proof}

Finally, we note that this result need not be made algorithmic if constant factors are unimportant, since Krauthgamer and Roughgarden~\cite{krauthgamer2011metric} devised a polynomial-time bi-criteria approximation algorithm for padded decompositions.

\section{Vertex Expansion, Metric Spread, Balanced Separators, and Relaxations}\label{sec:vertexexpansion}

In this section, we will prove \cref{prop:sp-to-embed} and \cref{prop:embed-to-sep}.

Fix a graph $G=(V,E)$ with $n$ vertices.
For a subset $S\subseteq V$ of the vertices, let $\dotpartial S$ be the minimum-cardinality vertex cover of $E(S,V\setminus S)$ (ties broken arbitrarily), which we call the \defn{cover-boundary}.
We define the \defn{covered-vertex expansion} of $S$ in $G$ as $\psi(S):=\smash[b]{\tfrac{|\dotpartial S|}{|S|}}$%
\footnote{These definitions of vertex expansion and boundary differ slightly from the usual ones, but are easier to work with for our purposes.},
and the cover-vertex expansion of $G$ itself is $\psi(G):=\min_{S\subseteq V\colon|S|\leq\frac{n}{2}}\frac{|\dotpartial S|}{|S|}$.
We will use cover-vertex expansion as an intermediary between $\Qbar(G)$ and vertex separators.

\begin{observation}
\label{obs:psi-to-sep}
Let\/ $\alpha>0$.
If, for every induced subgraph\/ $H$ of\/ $G$ with\/ $n'$ vertices, $H$ has vertex expansion at most\/ $\frac{\alpha}{n'}$, then\/ $G$ admits a\/ $\frac{2}{3}$-balanced vertex separator of size at most\/ $\alpha$.

Moreover, if, for every induced subgraph\/ $H$ of\/ $G$ with\/ $n'$ vertices, a set\/ $S$ with\/ $|S|\leq\frac{n'}{2}$ whose vertex expansion is at most\/ $\frac{\alpha}{n'}$ can be found in\/ $f(n')$ time (for some monotone function\/~$f$), then \/ $\frac{2}{3}$-balanced vertex separator of\/ $G$ with size at most\/ $\alpha$ can be found in\/ $\Oh(n\cdot f(n))$ time.
\end{observation}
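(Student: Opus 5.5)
We build the separator greedily by repeatedly peeling off low-expansion sets. We maintain a set $R\subseteq V$ of \emph{removed-and-discarded} vertices, initially empty, and a separator $X$, initially empty. The current graph is $H:=G-(R\cup X)$, with $n'=|V(H)|$ vertices. While $n'>\frac{2}{3}n$ we do one round. By hypothesis, $H$ has some $S\subseteq V(H)$ with $|S|\leq n'/2$ and $|\dotpartial S|\leq\frac{\alpha}{n'}|S|$. We then add $\dotpartial S$ to $X$ and add $S\setminus\dotpartial S$ to $R$.

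Since $\dotpartial S$ is a vertex cover of $E_H(S,V(H)\setminus S)$, once it is deleted there are no edges between $S\setminus\dotpartial S$ and the rest of $H$. Hence, throughout the process, no edge of $G-X$ joins $R$ to $V\setminus(R\cup X)$, and $R$ is a union of components of $G-X$.

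For the size bound, we charge each round's separator vertices to the vertices of $S$. While the process runs, $n'>\frac{2}{3}n$, so $\frac{\alpha}{n'}<\frac{3\alpha}{2n}$. The sets $S$ from different rounds are disjoint, because each $S$ leaves $H$ in full: part of it goes to $X$ and the rest to $R$. Therefore
\[|X|\leq\sum\frac{\alpha}{n'}|S|\leq\frac{3\alpha}{2n}\sum|S|.\]
The total $\sum|S|$ is at most the number of vertices that left $H$. Each round removes at most $n'/2$ vertices while $n'>\frac{2}{3}n$, so the process stops with $n'\geq\frac13n$ and at most $\frac23n$ vertices removed. This gives $|X|\leq\frac{3\alpha}{2n}\cdot\frac{2n}{3}=\alpha$.

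For balance, the final $H$ has at most $\frac23n$ vertices, so every component of $G-X$ inside $H$ is small enough. It remains to check that the components contained in $R$ are also small. This is the step I expect to be the main obstacle.

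The vertices of $R$ are bounded by what left $H$, which is at most $n-n'_{\text{final}}$. If the last round took $n'$ from just above $\frac23n$ down to about $\frac13n$, then $R$ could have size up to about $\frac23n$, but no more. More precisely, $|R|\leq n-|X|-n'_{\text{final}}$. Before the last round, $|R|<n-\frac23n=\frac13n$. The last round adds a set $S$ with $|S|\leq n'/2\leq n/2$. So the problem is the case where the last $S$ is large: its own components could exceed $\frac23n$ only if $|S|>\frac23n$, which is impossible because $|S|\leq n/2$. Components of $G-X$ inside $R$ lie either in an earlier $R$, of size less than $\frac13n$, or in the final $S$, of size at most $\frac n2$. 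So every component has size at most $\frac23n$, and $X$ is a $\frac23$-balanced separator.

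For the algorithmic part, there are at most $n$ rounds, each on an induced subgraph with at most $n$ vertices. Since $f$ is monotone, each round costs at most $f(n)$, plus linear-time bookkeeping; computing the cover-boundary is part of the output of the routine. This gives $\Oh(n\cdot f(n))$ total time.
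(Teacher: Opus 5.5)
Your construction is the paper's: repeatedly take $S$ with $|S|\le n'/2$ and $|\dotpartial S|\le\frac{\alpha}{n'}|S|$ in the current graph, delete $S\cup\dotpartial S$, stop once at most $\frac23 n$ vertices remain, and let $X$ be the union of the cover-boundaries. Your balance argument is correct and more explicit than the paper's ``by definition''. It places every component of $G-X$ in the final graph, in the part discarded before the last round, or in the last $S$.

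There is, however, a false step in the size bound. A round deletes $S\cup\dotpartial S$, not just $S$, and $\dotpartial S$ need not lie inside $S$. For example, if the only edges leaving $S$ form a perfect matching onto a set $T$, then $\dotpartial S=T$ is a valid choice. So a round can delete more than $n'/2$ vertices. Hence neither ``the process stops with $n'\ge\frac13 n$'' nor ``at most $\frac23 n$ vertices are removed'' holds in general.

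You only need $\sum|S|\le\frac23 n$, and this is true. Suppose the last round starts with $n'_{\mathrm{last}}>\frac23 n$ vertices. The earlier sets $S$ are disjoint subsets of the $n-n'_{\mathrm{last}}$ vertices already deleted, and the last one has size at most $n'_{\mathrm{last}}/2$. Therefore $\sum|S|\le n-\frac12 n'_{\mathrm{last}}<\frac23 n$. With that repair, your uniform estimate $\frac{\alpha}{n'}<\frac{3\alpha}{2n}$ gives $|X|<\alpha$.

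The paper instead keeps the factor $\frac{1}{n_i}$ round by round. It telescopes $\frac{n_i-n_{i+1}}{n_i}\le\ln\frac{n_i}{n_{i+1}}$ over all but the last round and adds $\frac12$ for the last one, obtaining $|X|\le(\frac12+\ln\frac32)\alpha$. This is slightly sharper, but your cruder bound suffices for the statement.
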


\begin{proof}
The construction follows a simple iterative algorithm.
Start with $G_0:=G$.
For each iteration $i$, with $V_i$ as the vertex set of $G_i$, let $n_i:=|V_i|$, find some set $S_i$ satisfying $\frac{|\dotpartial S_i|}{|S_i|}\leq\frac{\alpha}{n_i}$ and $|S_i|\leq\frac{n_i}{2}$, and then let $G_{i+1}:=G_i[V_i\setminus S_i\setminus\dotpartial S_i]$.
Halt when $|V_i|\leq\frac{2n}{3}$.
Let $t$ be the index where this occurs, so $|V_t|\leq\frac{2n}{3}$.

Let $X:=\cup_{0\leq i<t}\;\dotpartial S_i$.
This is a $\frac{2}{3}$-balanced vertex separator by definition, and its construction satisfies the time complexity requirements, so it only remains to check its size.

For each iteration $0\leq i<t$, we have $\frac{|\dotpartial S_i|}{|S_i|}\leq\frac{\alpha}{n_i}$, and hence $|\dotpartial S_i|\leq|S_i|\frac{\alpha}{n_i}$.
Thus, $|X|=\sum_{0\leq i<t}|\dotpartial S_i|\leq\alpha\sum_{0\leq i<t}\smash[t]{\frac{|S_i|}{n_i}}$.
Then, $n_{i+1}=n_i-|S_i\cup\dotpartial S_i|\leq n_i-|S_i|$, so rearranging gives $|S_i|\leq n_i-n_{i+1}$.
Combining these, we get
\[|X| \leq \alpha\left(\frac{|S_{t-1}|}{n_{t-1}}+\sum_{0\leq i<t-1}\frac{n_i-n_{i+1}}{n_i}\right){.}\]

Note that, for any $q>0$, we have $1-q\leq\ln\smash[b]{\frac{1}{q}}$.
Hence, for any $a,b>0$, we have $1-\frac{b}{a}=\frac{a-b}{a}\leq\ln\frac{a}{b}$.
Using $a=n_i$ and $b=n_{i+1}$, we get $\sum_{0\leq i<t-1}\frac{n_i-n_{i+1}}{n_i}\leq\sum_{0\leq i<t-1}\ln\frac{n_i}{n_{i+1}}=\ln\frac{n_0}{n_{t-1}}$.

For $i<t-1$, we have $n_{i+1}>\frac{2}{3}n$ and $n_i\leq n$, so $\frac{n_0}{n_{t-1}}\leq\frac{3}{2}$.
We also have $|S_{t-1}|\leq\frac{n_{t-1}}{2}$.
Therefore, $|X|\leq\alpha\smash[t]{\bigl(\frac{1}{2}+\ln\frac{3}{2}\bigr)}<\alpha$.
\end{proof}

Next, we can relate extremal line-embedded spread to cover-vertex expansion.

\begin{lemma}
\label{lemma:embed-to-psi}
For a graph\/ $G$ with\/ $n$ vertices,
\[\frac{n}{\psi(G)} \leq \Qbar(G) \leq \frac{4n}{\psi(G)}{.}\]
\end{lemma}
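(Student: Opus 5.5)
The plan is to prove the two inequalities separately: the lower bound by writing down an explicit feasible solution built from an optimal set $S$, and the upper bound by a threshold (coarea) argument over the level sets of $f$. Throughout I read $\sum_{u,v\in V}$ as a sum over ordered pairs, as in the definition of $\Qbar(G)$. I also use that $\dotpartial S$ and $\dotpartial(V\setminus S)$ have the same size, since both are minimum vertex covers of the same edge set $E(S,V\setminus S)$.

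\emph{Lower bound.}
Take $S$ with $|S|\leq n/2$ attaining $\psi(G)=|\dotpartial S|/|S|$, and let $C:=\dotpartial S$ and $k:=|C|$. If $k=0$, then $\psi(G)=0$ and the bound is trivial, as it is whenever $\Qbar(G)$ is unbounded. Otherwise I would set:
\begin{itemize}
\item $w(v)=1/k$ for $v\in C$ and $w(v)=0$ elsewhere, so that $\sum_v w(v)=1$;
\item $f(v)=0$ for $v\in S$ and $f(v)=1/k$ for $v\notin S$.
\end{itemize}
I then check feasibility edge by edge.
\begin{itemize}
\item An edge inside $S$ or inside $V\setminus S$ has $|f(u)-f(v)|=0$, so its constraint holds trivially.
\item A crossing edge has at least one endpoint in $C$, so $w(u)+w(v)\geq 1/k=|f(u)-f(v)|$.
\end{itemize}
The objective is $2|S|\,|V\setminus S|/k$. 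Since $|V\setminus S|\geq n/2$, this is at least $n|S|/k=n/\psi(G)$.

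\emph{Upper bound.}
Fix a feasible pair $(w,f)$ and, for $t\in\RR$, let $S_t:=\{v\colon f(v)>t\}$. The coarea identity gives
\[\sum_{u,v}|f(u)-f(v)|=2\int_{\RR}|S_t|\,|V\setminus S_t|\,dt\leq 2n\int_{\RR}\min\bigl(|S_t|,|V\setminus S_t|\bigr)\,dt.\]
Let $T_t$ be whichever of $S_t$ and $V\setminus S_t$ is smaller, so $|T_t|\leq n/2$. By the definition of $\psi(G)$, whenever $T_t\neq\emptyset$ we get $|T_t|\leq|\dotpartial T_t|/\psi(G)$. It therefore suffices to bound $\int|\dotpartial T_t|\,dt\leq 2$.

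The key step, which I expect to be the only real obstacle, is exhibiting an explicit vertex cover of the crossing edges at level $t$. I would take
\[C_t:=\{v\colon |f(v)-t|\leq w(v)\}.\]
To see that it covers, take a crossing edge $uv$ with $f(u)\leq t<f(v)$. If neither endpoint were in $C_t$, then $t>f(u)+w(u)$ and $t<f(v)-w(v)$. This would give $f(v)-f(u)>w(u)+w(v)$, violating the edge constraint. Hence $|\dotpartial T_t|\leq|C_t|$.

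Each vertex $v$ lies in $C_t$ for a set of $t$ of measure $2w(v)$. So
\[\int|C_t|\,dt=2\sum_v w(v)\leq 2.\]
Combining the steps gives $\sum_{u,v}|f(u)-f(v)|\leq 2n\cdot 2/\psi(G)=4n/\psi(G)$. Taking the maximum over feasible $(w,f)$ yields $\Qbar(G)\leq 4n/\psi(G)$.
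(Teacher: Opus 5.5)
Your proof is correct and follows the paper's argument: the same feasible pair built from a minimizing set $S$ (your $f$ is a translate of the paper's $\pm\frac{1}{2|\dotpartial S|}$), and the same threshold argument with the cover $C_t=\{v\colon |f(v)-t|\le w(v)\}$ and $\int|C_t|\,dt\le 2$, where your bound $|S_t|\,|V\setminus S_t|\le n\min(|S_t|,|V\setminus S_t|)$ is the paper's use of $n/|B_\theta|\ge 1$. Your handling of $k=0$ is also right, since with no crossing edges $f$ can be scaled arbitrarily and $\Qbar(G)$ is unbounded, and fixing an arbitrary feasible pair rather than an optimal one sidesteps whether the maximum is attained; the paper leaves both points implicit.
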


\begin{proof}
We first show the left-hand-side inequality.
Let $S\subseteq V$ have $|S|\leq\smash[b]{\frac{n}{2}}$ and $\psi(S)=\psi(G)$.
Let $f(v):=\frac{1}{2|\dot\partial S|}$ if $v\in S$, and $f(v):=\frac{-1}{2|\dot\partial S|}$ otherwise.
Let $w(v):=\frac{1}{|\dot\partial S|}$ if $v\in\dotpartial S$, and $w(v):=0$ otherwise.
This is a feasible solution for $\Qbar(G)$.
To bound its objective value, observe that $\sum_{u,v\in V}|f(u)-f(v)|=2\sum_{u\in S}\sum_{v\notin S}\smash[b]{\frac{1}{|\dot\partial S|}}=\smash[b]{\frac{2|S||V\setminus S|}{|\dot\partial S|}}\geq\smash[b]{\frac{n|S|}{|\dot\partial S|}}=\smash[b]{\frac{n}{\psi(G)}}$, where the inequality uses $|V\setminus S|=n-|S|\geq\frac{n}{2}$.

We now show the right-hand-side inequality, by generalizing a threshold rounding argument of Lee \cite[Proof of Theorem~2.3]{lee2017separators}, which itself is a variation of a standard argument.

Let $(w,f)$ be the optimal and feasible solution to $\Qbar(G)$.
For each $\theta\in\RR$, define
\[A_\theta:=\{v\in V\colon f(v)\leq\theta\}{,} \qquad S_\theta:=\{v\in V\colon|f(v)-\theta|\leq w(v)\}{,} \qquad B_\theta:=\{v\in V\colon f(v)>\theta\}{.}\]
If $u\in A_\theta\setminus S_\theta$ and $v\in B_\theta\setminus S_\theta$, then $f(u)\leq\theta-w(u)$ and $f(v)>\theta+w(v)$, so $|f(u)-f(v)|>w(u)+w(v)$.
Hence, by feasibility, $uv\notin E$, and thus $E(A_\theta\setminus S_\theta,B_\theta\setminus S_\theta)=\emptyset$ for all $\theta\in\RR$.

Fix $\theta\in\RR$ with $A_\theta,B_\theta\neq\emptyset$.
Assume without loss of generality that $|A_\theta|\leq|B_\theta|$.
Note that $S_\theta$ is a vertex cover for $E(A_\theta,B_\theta)$, so $|\dotpartial(A_\theta)|\leq|S_\theta|$.
Therefore,
\[\psi(G) \leq \frac{|\dotpartial(A_\theta)|}{|A_\theta|} \leq \frac{|S_\theta|}{|A_\theta|} \leq \frac{|S_\theta|}{|A_\theta|}\cdot\frac n{|B_\theta|}{.}\]
Rearranging gives $|S_\theta|\geq\frac{\psi(G)}n|A_\theta||B_\theta|$.
This also trivially holds when one of $A_\theta,B_\theta$ is $\emptyset$.

Using this inequality,
\begin{align*}
\int_{\theta\in\RR}|S_\theta|\,d\theta &\geq \frac{\psi(G)}n\int_{\theta\in\RR}|A_\theta||B_\theta|\,d\theta = \frac{\psi(G)}n\int_{\theta\in\RR}\sum_{u,v\in V}\mathbf{1}_{\{f(u)\leq\theta<f(v)\}}\,d\theta \\
&= \frac{\psi(G)}n\sum_{u,v\in V}\int_{\theta\in\RR}\mathbf{1}_{\{f(u)\leq\theta<f(v)\}}\,d\theta = \frac{\psi(G)}n\sum_{u,v\in V}\max\{f(v)-f(u),0\} \\
&= \frac{\psi(G)}{2n}\sum_{u,v\in V}|f(u)-f(v)|{.}
\end{align*}
Moreover,
\[\int_{\theta\in\RR}|S_\theta|\,d\theta \leq \sum_{v\in V}\int_{\theta\in\RR}\mathbf{1}_{\{|f(v)-\theta|\leq w(v)\}}d\theta = \sum_{v\in V}2w(v) \leq 2{.}\]
Hence, combining the two inequalities, we get
\[\Qbar(G) = \sum_{u,v\in V}|f(u)-f(v)| \leq \frac{4n}{\psi(G)}{.}\qedhere\]
\end{proof}

Using this, we may prove \cref{prop:embed-to-sep}.

\restate{proposition}{prop:embed-to-sep}

\begin{proof}
By \cref{lemma:embed-to-psi}, every connected induced subgraph $H$ satisfies
$\psi(H)\leq\frac{4\beta}{n'}$.
If $H$ is disconnected, then $\psi(H)=0$.
Therefore, by \cref{obs:psi-to-sep}, $G$ has a separator of size at most $4\beta$.
\end{proof}

Next, we prove \cref{prop:sp-to-embed}.

\restate{proposition}{prop:sp-to-embed}

\begin{proof}
Let $w\colon V\to\RR_{\geq 0}$ be an optimal solution for $Q(G)$.
Let $f\colon V\to\RR$ be a non-expansive embedding of $(V,\dbar_w)$ into the line with average distortion at most $\alpha$.
The average distortion bound gives $Q(G)=\sum_{u,v\in V}\dbar_w(u,v)\leq\alpha\sum_{u,v\in V}|f(u)-f(v)|$, so it remains only to check that $(w,f)$ is a feasible solution for $\Qbar(G)$.

For every edge $uv\in E$, non-expansiveness gives
\[|f(u)-f(v)| \leq \dbar_w(u,v) \leq w(u)+w(v){,}\]
and $\sum_{x\in V}w(x)\leq 1$.
So $(w,f)$ is a feasible solution for $\Qbar(G)$.
\end{proof}

\section{Conclusion: Impact on Previous Works}\label{sec:conclusion}

As the main results of this paper, we proved \cref{thm:mainrig} and \cref{thm:mainim} that were previously stated, respectively, by~\cite{lee2017separators} and by Korhonen and Lokshtanov~\cite{korhonen2024induced}.
We also proved \cref{thm:pad-rig} that fixes the result ($\star$) claimed by Lee~\cite{lee2017separators}, but with weaker parameters.

In this section, we go over papers building on \cref{thm:mainrig}, \cref{thm:mainim}, and the result ($\star$) or their respective proofs, and we discuss whether or not (and how) they have been fixed as a result of our work.

\subsection*{Results Directly Using Separator Theorems}

A number of papers~\cite{bonnet2019optimality,okrasa2020subexponential,novotna2021subexponential,berthe2025pushing} used the separator theorems of Lee~\cite{lee2017separators} as black boxes, usually to design divide-and-conquer algorithms.
We remark that in typical cases, exchanging the separator of size $\Oh(\sqrt{m})$ to one of size $\Oh(\sqrt{m}\log m)$, given by Matoušek~\cite{matousek2014near} makes the running time only slightly worse.
Nevertheless, with \cref{thm:mainrig} and \cref{thm:mainim} reproven, the results claimed in these papers are now fully fixed.
Similarly, de~Berg, Jansen, and Lamme~\cite{deberg2025star} did not use the results of Lee since they were already aware of the mistake, and thus they used the weaker $\Oh(\sqrt{m}\log m)$ separator.
With \cref{thm:mainrig} and \cref{thm:mainim} reproven, the results claimed in this paper can now be slightly improved.

\subsection*{Results Directly Using Padded Decompositions}

The main result of the paper \emph{A Constant Factor Approximation for Navigating Through Connected Obstacles in the Plane} by Kumar, Lokshtanov, Saurabh, and Suri~\cite{kumar2021constant} directly relied on ($\star$), but was needed only for string graphs.
Thus, the result is now fixed by \cref{thm:pad-rig}.

Similarly, in the paper \emph{Point Separation and Obstacle Removal by Finding and Hitting Odd Cycles} by Kumar, Lokshtanov, Saurabh, Suri, and Xue~\cite{kumar2022point}, one of the main results for the ``obstacle removal'' problem directly relied on ($\star$) for string graphs and is wholly fixed by \cref{thm:pad-rig}.

Now let us go back to the paper that started it all, i.e., \emph{Separators in Region Intersection Graphs} by Lee~\cite{lee2017separators}.
As an application of the padding decomposition scheme, Lee showed a result concerning eigenvalue bounds for region intersection graphs.
For a graph $G$ with degree-vector $\vec d$ and adjacency matrix $A$, its Laplacian matrix is $L:=\operatorname{diag}(\vec d)-A$, and the eigenvalues of $L$ are denoted $\lambda_1(G)\leq\lambda_2(G)\leq\cdots\leq\lambda_n(G)$.
Lee~\cite[Theorem~1.11]{lee2017separators} claimed that for any graph $G$ with maximum degree $\Delta$ that is a region intersection graph over another graph $G_0$, where $G_0$ is $K_h$-minor-free, $\lambda_k(G)=\Oh(\Delta^2 h^6\log h)\frac{k}{|V_G|}$.
The proof applied ($\star$) in full generality, and so we recover a weaker bound via \cref{thm:pad-rig}.
The resulting bound is the same up to factors depending on $h$.
In particular, if $h$ is a constant, then the bound is $\lambda_k(G)=\Oh(\Delta^2)\frac{k}{|V_G|}$.
Lee applied the methods of Kelner, Lee, Price, and Teng~\cite{kelner2011metric} for this proof, and it seems quite likely that these results could be derived using only the weaker notion of Rabinovich embeddings, but we leave this to future work.

\subsection*{Results Using Rabinovich Embeddings}

A closely related problem was considered in the paper \emph{Reweighted Spectral Partitioning Works: A Simple Algorithm for Vertex Separators in Special Graph Classes} by Spalding-Jamieson~\cite{spaldingjamieson2025reweighted}.
Again, let $G$ be a region intersection graph over another graph $G_0$, where $G_0$ is $K_h$-minor-free, and assume $G$ has maximum degree at most $\Delta$.
One of the results claimed in the earlier preprints of Spalding-Jamieson's work is that, for the so-called maximally reweighted second eigenvalue of $G$, denoted $\lambda_2^*(G)$, it holds that $\lambda_2^*(G)=\Oh(\Delta h^2\log h)\frac{1}{|V_G|}$.
This would be a generalization of the earlier mentioned bound on $\lambda_k$ by Lee~\cite{lee2017separators} for the special case where $k=2$.
However, since the proof of this bound also relied on ($\star$), it was removed from the later versions of the paper.
This step can now be fixed by directly relying on Rabinovich embeddings instead of padded decompositions.
Hence, the missing step for the result follows from \cref{thm:embed}.

\subsection*{Results Using Klein-Plotkin-Rao Decomposition}

Finally, let us discuss the so-called \emph{Klein-Plotkin-Rao (KPR) decomposition}~\cite{klein1993excluded} (sometimes called a low-diameter decomposition).
Originally shown for $K_h$-minor-free graphs, this result says that we can remove a small fraction of vertices so that any two vertices in the same component of the resulting graph are close to each other in the original graph.
Lee~\cite{lee2017separators} claimed that an analogous result holds for string graphs, and, more generally, graphs excluding a fixed induced minor.

The KPR decomposition for string graphs and induced-minor-free graphs was later used in the paper \emph{A\/ $1.9999$-Approximation Algorithm for Vertex Cover on String Graphs} by Lokshtanov, Panolan, Saurabh, Xue, Zehavi~\cite{lokshtanov2024approximation}, as well as the (first version of) the preprint \emph{Treewidth is Polynomial in Maximum Degree on Graphs Excluding a Planar Induced Minor} by Bonnet, Hodor, Korhonen, and Masařík~\cite{bonnet2023treewidth}.

For the string graph results in the first paper, the required theorems \cite[Theorems 5 and~9]{lokshtanov2024approximation} follow from applying our \cref{thm:pad-rig} with unweighted shortest-path metrics using essentially-unchanged proofs.
Note that the string representation is not required since we may use an approximation algorithm of Krauthgamer and Roughgarden~\cite{krauthgamer2011metric}.
Their results also extend to region intersection graphs over $K_h$-minor-free graphs in this manner, but extending them to induced-minor-free graphs via an analogous low-diameter decomposition result remains an open problem.

\section*{Acknowledgements}

We are grateful to anonymous reviewers for pointing out some issues in the first version of the paper, and especially for carefully checking the formalization in Lean.

The results in the main body of the paper have been obtained entirely by the human authors, with no AI assistance.
AI tools were used for the Lean formalization (Aristotle and GPT 5.6 Sol), for obtaining an initial form of the improvement in \cref{app:improved-bound} (GPT 5.6 Sol), and for proofreading the paper (Claude Opus 5.5).

\bibliographystyle{alphaurl}
\bibliography{ref}

\appendix
\crefalias{section}{appendixsection}

\section{Planar Embeddings of Vertex-Weighted String Graphs}
\label{app:weighted-planar}

We first state the result of Davies that allows us to embed an edge-weighted planar graph in an unweighted one, approximately keeping the distances.

\begin{theorem}[{Davies~\cite[Theorem~1.5]{davies2025string}}]
\label{thm:planar-unweighting}
Let\/ $P$ be a countable planar graph whose edge lengths belong to\/ $(0,1]$, and let\/ $d_P$ be its weighted shortest-path metric.
There is an unweighted planar graph\/ $H$ on the same vertex set such that
\[\frac{d_P(u,v)}{47321600}\leq d_H(u,v)\leq 162d_P(u,v)+1\qquad\text{for all }u,v\in V(P){,}\]
where\/ $d_H$ is the unweighted shortest-path metric of\/ $H$.
\end{theorem}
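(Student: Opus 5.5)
The plan is to replace $P$ by the quotient obtained from a bounded-diameter clustering, with each cluster re-expanded as an unweighted star. Each unweighted edge of $H$ should then correspond to a $d_P$-distance of at most a constant. Conversely, a $d_P$-path of length $L$ should visit only $\Oh(L)+\Oh(1)$ clusters.

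\emph{Step 1: a connected, sparse clustering.} I would partition $V(P)$ into clusters $C$ such that:
\begin{itemize}
\item each $P[C]$ is connected and has strong diameter at most $1$ in $d_P$;
\item every $d_P$-ball of some fixed radius $r_0>0$ meets at most $s$ clusters, for absolute constants $r_0,s$.
\end{itemize}
Such \emph{strong sparse partitions} are known for planar graphs, e.g.\ by Busch, LaFortune and Tirthapura, or via KPR-type shortest-path-separator decompositions. For countable $P$ I would first handle each finite subgraph and then pass to a limit by compactness, or apply the finite argument locally. I expect this step to be the main obstacle. Weak-diameter partitions do not suffice, because connectivity of each $P[C]$ is essential for planarity in Step 2. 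I would first try the iterative construction that deletes shortest paths, as in Klein--Plotkin--Rao and Abraham et al., adapted to guarantee both padding and strong diameter.

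\emph{Step 2: building $H$.} In each cluster $C$ fix a center $c_C$. Let $H$ consist of:
\begin{itemize}
\item the star joining $c_C$ to every other vertex of $C$, for each cluster $C$;
\item an edge $c_Cc_{C'}$ whenever some edge of $P$ joins $C$ and $C'$.
\end{itemize}
Contracting each connected $P[C]$ gives a planar minor $P/\mathcal{C}$. The center-to-center part of $H$ is a subgraph of this minor on the centers, so it is planar. Attaching pendant leaves to its vertices preserves planarity, hence $H$ is planar on vertex set $V(P)$.

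\emph{Step 3: the lower bound on $d_H$.} Every $H$-edge joins two vertices at $d_P$-distance at most $3$. A star edge stays inside a cluster, so it spans at most $1$. A center edge spans at most $1+1+1$: one cluster diameter, one $P$-edge of length at most $1$, and another cluster diameter. Therefore $d_P\leq 3d_H$.

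\emph{Step 4: the upper bound on $d_H$.} Take a shortest $P$-path of length $L$ from $u$ to $v$. Cut it into $\lceil L/r_0\rceil+1$ consecutive pieces, each inside a ball of radius $r_0$. Each piece meets at most $s$ clusters, so the path meets at most $s(L/r_0+2)$ distinct clusters. Record the sequence of clusters the path visits and shortcut any repetitions. Consecutive clusters in the resulting sequence are joined by a $P$-edge, so their centers are $H$-adjacent. This gives
\[d_H(u,v)\leq 2+s(L/r_0+2)=\Oh(d_P(u,v))+\Oh(1).\]
To get the stated form $162\,d_P+1$, the additive term must be exactly $1$ and the constants must be tuned. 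I would handle this by treating short distances separately, e.g.\ when $u,v$ lie in a common cluster or adjacent clusters, and by rescaling the cluster diameter. Matching the precise numbers $47321600$ and $162$ would depend on the specific sparse-partition constants used in Step 1.
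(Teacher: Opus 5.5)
\textbf{The gap is Step 1, and the ``$+1$'' cannot be obtained.} Steps 2--4 are sound once Step 1 is granted, and they would even give $d_P\le 3d_H$, far better than $47321600$. But Step 1 is not a citation; as you suspected, it is the whole difficulty.

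What you need is a single partition into connected clusters of strong diameter at most $1$ in which every $d_P$-ball of radius $r_0$ meets at most $s$ clusters, i.e.\ a strong-diameter sparse \emph{partition}. The references you cite do not give this:
\begin{itemize}
\item Busch, LaFortune and Tirthapura construct sparse \emph{covers}: overlapping clusters, each vertex lying in boundedly many. The standard passage from a cover to a partition (assign each vertex to one cluster containing its ball) controls only weak diameter. It loses exactly the connectivity you need for planarity in Step 2.
\item Klein--Plotkin--Rao gives weak-diameter clusters.
\item The strong-diameter constructions in the Abraham et al.\ line of work give sparse covers or \emph{padded} decompositions. Padding only bounds, for each fixed ball, the probability that it is cut. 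It does not give one partition in which every ball meets $O(1)$ clusters.
\end{itemize}
In fact your Step 1 is at least as strong as a known open problem. Rescale, and delete edges longer than the scale (no short shortest path uses them). Then Step 1 becomes a strong $(O(1),O(1))$-sparse partition scheme for all weighted planar graphs. Since a shortest path of length at most $r_0$ lies in a ball of radius $r_0$, it would imply Filtser's conjecture that planar graphs admit $(O(1),O(1))$-scattering partitions. To my knowledge that conjecture is open even for planar graphs. So the proposal reduces the theorem to a statement at least as hard as an open problem, rather than proving it.

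Independently, your plan to reach the exact form $162\,d_P+1$ by ``treating short distances separately'' cannot succeed for any construction. The additive term $1$ is unattainable in general. Take $P=K_{1,5}$ with all edge lengths $\epsilon<1/324$. Then $162\,d_P(u,v)+1\le 324\epsilon+1<2$ for every pair, so every pair must be adjacent in $H$. This forces $H=K_6$, which is not planar.

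So the statement as quoted needs an additive constant of at least $2$. The realistic output of your scheme, given Step 1, is $d_P\le 3d_H$ and $d_H\le (s/r_0)\,d_P+O(s)$. That weaker form is all the paper uses. The paper does not prove this theorem itself; it imports it from Davies. It applies it only to pairs at $d_P$-distance at least $1$, where the additive term is absorbed into the multiplicative constant.
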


We now show that no universal distortion bound is possible for embeddings of vertex-weighted string graphs into edge-weighted planar graphs.

\begin{proposition}
For every\/ $D\geq 1$, there is a vertex-weighted string graph whose endpoint-light shortest-path metric admits no embedding with distortion at most\/ $D$ into any edge-weighted planar graph.
\end{proposition}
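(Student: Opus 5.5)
The plan is to argue by contradiction, combining \cref{thm:planar-unweighting} with the fact that unweighted planar graphs exclude some fixed graph as a fat minor. The vertex-weighted string graphs will be chosen so that their metrics contain arbitrarily fat models of a non-planar graph.

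\textbf{Step 1: a suitable family.} Start from the clique $K_N$, which is a string graph: take $N$ long horizontal segments and join them by one vertical segment crossing all of them. Vertex weights let us make this clique far from planar in a coarse sense. Fix $k$ (say $k=5$ or $6$) and build strings $s_1,\dots,s_k$ for the branch sets. Realize each "edge" $s_is_j$ by a long chain of tiny weight-$\epsilon$ strings, and let the crossings among chains occur only at heavy hub strings. Concretely, the goal is a vertex-weighted string graph $G_N$ whose endpoint-light metric contains an $L$-fat model of $K_k$ with $L\to\infty$, relative to a scale at which the model is $1$-dense.

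A cleaner candidate uses a grid. Put a family of pairwise disjoint horizontal and vertical curves in the plane, forming a string-graph grid. Give weight $1$ to the pieces near crossings, and arrange that the string picture contains a $K_{k}$ pattern through long "bridge" strings of weight $0$ that cross many other strings. Zero weight makes such a bridge act as a shortcut connecting far-away parts. This shows how weights break planarity: a weight-$0$ string identifies many points at once, which no planar metric can imitate cheaply.

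\textbf{Step 2: reduce to unweighted planar graphs.} Suppose $d_w^{G}$ embeds into an edge-weighted planar graph $P$ with distortion $D$. Rescale so the relevant lengths lie in $(0,1]$; long edges can be subdivided, which keeps $P$ planar. Apply \cref{thm:planar-unweighting} to get an unweighted planar $H$ whose metric is quasi-isometric to $d_w^G$ on our points, with constants depending only on $D$.

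\textbf{Step 3: planar graphs have no fat $K_5$.} Unweighted planar graphs contain no $K$-fat $K_5$ minor model for some absolute $K$. This is essentially the statement that a fat minor quasi-isometrically pushes forward to a genuine minor, and it follows from the known result that planar graphs exclude a fat $K_5$ minor. Fat models are preserved under quasi-isometry up to a change of fatness: take geodesics in $H$ between the images of consecutive points of each branch set, after discretizing at the right scale. Hence for $L$ large relative to $D$, the fat $K_5$ model in $d_w^G$ yields a $K$-fat $K_5$ model in $H$, a contradiction.

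\textbf{Main obstacle.} The hard part is Step 1: constructing, within string graphs (which are themselves "planar-like"), a vertex weighting whose metric contains an arbitrarily fat $K_5$ (or $K_{3,3}$) model. The first thing to try is to exploit weight-$0$ strings that cross many heavy strings. A $K_{3,3}$ can be drawn in the plane with crossings. At each crossing of two $K_{3,3}$-edges, place the two edge-paths as chains of weight-$1$ tiny strings that do not intersect each other. The strings can be routed so that the two chains physically cross while avoiding each other, which is possible if each chain consists of separate small segments that cross at gaps. With length-$L$ chains this yields an $\Omega(L)$-fat $K_{3,3}$ model. One must check that no unintended shortcuts appear: there must be no weight-$0$ strings touching both chains, and no segment from one chain may meet a segment from the other.
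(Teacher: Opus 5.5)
Step~1 is not just missing: the family it asks for does not exist, so the approach fails.

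\textbf{Why Step~1 cannot succeed.} Suppose $d=d_w^G$ for a vertex-weighted string graph $G$ contains a model of $K_5$ (or $K_{3,3}$) whose branch and edge sets are $\lambda$-connected, and whose non-incident sets are at distance greater than $4\lambda$.

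\begin{itemize}
\item Every set has at least two points. A branch set meets at least two pairwise disjoint edge sets, and an edge set meets two disjoint branch sets.
\item Join consecutive points by shortest paths of length at most $\lambda$. The enlarged sets are connected in $G$ and lie within distance $\lambda$ of the originals.
\item Every vertex of an enlarged set has weight at most $2\lambda$, since a path of length at most $\lambda$ has inner weights at most $\lambda$ and end weights at most $2\lambda$.
\item Non-incident enlarged sets are at distance greater than $2\lambda$. An edge $uv$ between such light vertices has length $\frac12(w(u)+w(v))\leq 2\lambda$, so these sets are disjoint and non-adjacent.
\item Trim each edge set to the interior of a connecting path, exactly as at the end of the proof of \cref{thm:pure-pairs-subdiv}. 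This gives an induced minor model of $\Ksubdiv{5}$ in the string graph $G$, which is impossible.
\end{itemize}

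So in every vertex-weighted string graph, a $\lambda$-connected $K_5$ model has fatness less than $4\lambda$.

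Now consider your transport in Step~3, which joins images of consecutive points by geodesics. Under a map with $d\leq d_P\circ\psi\leq D\,d$, it keeps non-incident sets disjoint only if the fatness exceeds roughly $D\lambda$. Through \cref{thm:planar-unweighting} the required factor is even larger. Since you must handle arbitrarily large $D$, Steps~2--3 can never be fed. (Step~3's claim itself is trivial: planar graphs exclude $K_5$ minors outright.)

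This also explains why your concrete attempts break.
\begin{itemize}
\item Consecutive strings of a chain must intersect, so each chain is a connected plane set, and two chains that cross must meet. Chains with ``gaps'' are not paths in $G$.
\item A weight-$0$ bridge puts all its neighbours within small distance of one another. It collapses distances rather than creating fat non-planar structure.
\end{itemize}

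\textbf{What the paper uses instead.} The obstruction is isoperimetric, not minor-theoretic.
\begin{itemize}
\item Take the $N\times N$ grid $Q$ with unit weights, plus a universal vertex of weight $R$. On $V(Q)$ the metric is $\min\{d_Q,R+1\}$, so the space has diameter $\Oh(R)$ but is a grid below scale $R+1$.
\item Apply \cref{thm:planar-unweighting} and restrict to a ball of radius $\Oh(R)$ around the apex's image. The Lipton--Tarjan bounded-radius lemma then gives an $\Oh(R)$-vertex balanced separator of the planar host.
\item Pull it back to the grid vertices whose images lie within $\lceil b\rceil$ of it. Each pulled-back cluster has $d$-diameter below $R+1$, hence bounded grid diameter and $\Oh(1)$ vertices.
\item This yields an $\Oh(R)$ balanced separator of $Q$, contradicting the $N/3$ lower bound once $N\gg R$.
\end{itemize}
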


\begin{proof}
Suppose, for a contradiction, that such embeddings exist for a universal constant $D$.
Let $Q$ be the $N\times N$ grid, and obtain $G$ from $Q$ by adding a universal vertex $z$.
Give every grid vertex weight $1$ and give $z$ weight $R$, where $R\geq 1$ and $N$ will be chosen later.
It is straightforward to verify that $G$ is a string graph.
For the endpoint-light metric $d:=d_w^G$, we have
\[d(x,y)=\min\{d_Q(x,y),R+1\}\quad(x,y\in V(Q)){,}\qquad d(z,x)=\frac{R+1}{2}\quad(x\in V(Q)){.}\]
In particular, $G$ has (metric) diameter at most $R+1$, while distances smaller than $R+1$ between grid vertices agree with their grid distances.

By assumption, after rescaling the target edge lengths, there is an embedding into an edge-weighted planar graph $P$ with
\[d(x,y)\leq d_P(\phi(x),\phi(y))\leq Dd(x,y){.}\]
Contract zero-length edges and subdivide edges so that all edge lengths belong to $(0,1]$.
By \cref{thm:planar-unweighting}, we may replace $P$ by an unweighted planar graph $H$ so that, for constants $a,b\geq 1$ depending only on $D$,
\[a^{-1}d(x,y)\leq d_H(\phi(x),\phi(y))\leq b\,d(x,y){.}\]
Here the additive constant in that theorem is absorbed into $b$, since distinct vertices of $G$ have distance at least $1$.
Set $L:=b(R+1)$ and restrict $H$ to the ball of radius $2L$ about $\phi(z)$.
This ball contains every shortest path between image vertices, so their distances are unchanged, and the resulting graph has radius at most $2L$.

Assign each vertex $u$ of $H$ mass $\mu(u):=|\phi^{-1}(u)\cap V(Q)|$, of total mass $N^2$.
The bounded-radius separator lemma of Lipton and Tarjan~\cite[Lemma~2]{lipton1979separator} gives a set $S_H$ of size $\Oh(L)=\Oh(R)$ such that every component of $H-S_H$ has mass at most $2N^2/3$.
All constants hidden in this proof may depend on $D$, but not on $R$ or $N$.
Let $r:=\lceil b\rceil$ and define
\[S_Q:=\{x\in V(Q):d_H(\phi(x),S_H)\leq r\}{.}\]
Every grid edge has $d$-length $1$, so its endpoints have images joined by a path in $H$ of length at most $b\leq r$.
If both endpoints lie outside $S_Q$, such a path avoids $S_H$.
Consequently, the image of each component of $Q-S_Q$ lies in one component of $H-S_H$, and $S_Q$ is a $\frac{2}{3}$-balanced separator of $Q$.
Adding $z$ also gives a balanced separator of $G$.

For each $u\in S_H$, the set
\[X_u:=\{x\in V(Q):d_H(\phi(x),u)\leq r\}\]
has $d$-diameter at most $2ar$.
Choose $R$ so that $R+1>2ar$.
Then $X_u$ has grid diameter at most $2ar$ and thus contains at most $(2\lfloor 2ar\rfloor+1)^2=\Oh(1)$ vertices.
Since $S_Q=\bigcup_{u\in S_H}X_u$, we obtain $|S_Q|=\Oh(R)$.
However, every $\frac{2}{3}$-balanced separator of the $N\times N$ grid has size at least $N/3$: deleting fewer vertices leaves more than $2N/3$ intact rows and an intact column, whose union is connected and contains more than $2N^2/3$ vertices.
Taking $N$ sufficiently large compared with $R$ gives a contradiction.
\end{proof}

Since $d_w^G\leq\dbar_w^G\leq 2d_w^G$, the same conclusion holds for endpoint-heavy vertex-weighted shortest-path metrics.

\begin{corollary}
For every\/ $D\geq 1$, there is a vertex-weighted string graph whose endpoint-heavy shortest-path metric admits no embedding with distortion at most\/ $D$ into any edge-weighted planar graph.
\end{corollary}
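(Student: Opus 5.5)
The corollary follows from the preceding proposition by comparing the two metrics; no new construction is needed. Fix $D\geq 1$ and apply the proposition with $2D$ in place of $D$. This gives a vertex-weighted string graph $G$ with weights $w$ whose endpoint-light metric $d_w^G$ admits no embedding with distortion at most $2D$ into any edge-weighted planar graph. I claim that the endpoint-heavy metric $\dbar_w^G$ of this same graph admits no embedding with distortion at most $D$.

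Suppose, for contradiction, that $\phi\colon V\to V(P)$ embeds $\dbar_w^G$ into an edge-weighted planar graph $P$ with distortion at most $D$. After rescaling the edge lengths of $P$, exactly as in the proof of the proposition, this means
\[\dbar_w^G(x,y)\leq d_P(\phi(x),\phi(y))\leq D\,\dbar_w^G(x,y)\qquad\text{for all }x,y\in V.\]
Combine this with the pointwise comparison $d_w^G\leq\dbar_w^G\leq 2d_w^G$, already used in the proof of \cref{thm:embed}. It holds because the two path lengths differ only in the endpoint terms, $\frac12 w(u_1)+\frac12 w(u_k)$ versus $w(u_1)+w(u_k)$, which are nonnegative and at most the full length. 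Chaining the inequalities gives
\[d_w^G(x,y)\leq\dbar_w^G(x,y)\leq d_P(\phi(x),\phi(y))\leq D\,\dbar_w^G(x,y)\leq 2D\,d_w^G(x,y).\]
So the same map $\phi$ embeds $d_w^G$ into $P$ with distortion at most $2D$, contradicting the choice of $G$.

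The only point needing care is that distortion is read in the same normalized sense as in the proposition, namely after rescaling the target so that the embedding is non-contracting. With that convention the chain above is immediate. The proposition holds for every $D\geq1$, so invoking it at $2D$ costs nothing.
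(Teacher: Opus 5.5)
Your proposal is correct and takes the same approach as the paper, which deduces the corollary from the preceding proposition via the comparison $d_w^G\leq\dbar_w^G\leq 2d_w^G$. You additionally spell out the step the paper leaves implicit: invoking the proposition at $2D$ and chaining the inequalities.
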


\section{An Improved Embedding Bound}
\label{app:improved-bound}

In this short appendix section, we improve the dependence on $h$ in \cref{thm:embed}, and consequently \cref{thm:mainim}.
Specifically, we obtain the following pair of results.

\begin{theorem}
\label{thm:improved-embed}
Let\/ $G=(V,E)$ be a connected graph excluding\/ $\Ksubdiv{h}$ as an induced minor.
Then, for any assignment of vertex weights\/ $w\colon V\to\RR_{\geq 0}$, the metrics\/ $d_w^G$ and\/ $\dbar_w^G$ admit non-expansive embeddings into the line with average distortion\/ $\Oh(\log h)$.

Furthermore, there is a polynomial-time in\/ $|V|$ algorithm that, given a graph $G$ and an assignment of vertex weights\/ $w\colon V\to\RR_{\geq 0}$, obtains either an induced minor model of\/ $\Ksubdiv{h}$ in\/ $G$, or such a non-expansive embedding into the line.
\end{theorem}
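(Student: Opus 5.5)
The plan is to replace the single-scale dichotomy behind \cref{thm:embed} with a reduction to a metric on only $\mathrm{poly}(h)$ points, and then apply a Bourgain/Feige-type embedding to that small metric. Such an embedding costs $\Oh(\log \mathrm{poly}(h))=\Oh(\log h)$. Let $n:=|V|$, $d:=d_w^G$, $\Delta:=n^{-2}\sum_{u,v}d(u,v)$, and fix a large absolute constant $c$. The $\dbar_w^G$ case then follows exactly as at the end of the proof of \cref{thm:embed}, using $d\leq\dbar_w^G\leq 2d$.

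\textbf{Step 1 (heavy ball).} If some $x$ has $|B(x,\Delta/c)|\geq n/4$, then \cref{lemma:large-core} with $\kappa=1/4$ already gives average distortion $\Oh(1)$. So from now on we assume every ball of radius $\Delta/c$ has fewer than $n/4$ points.

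\textbf{Step 2 (covering by few balls).} This is the main obstacle. We want $k=\mathrm{poly}(h)$ centres $x_1,\dots,x_k$ such that the balls $B(x_i,\Delta/c)$ cover at least $(1-\gamma)n$ vertices, for a small constant $\gamma$.

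To get them, we pick centres greedily. At each stage we choose a vertex whose $\Delta/c$-ball contains the most uncovered vertices, and we stop once the uncovered mass drops below $\gamma n$. Suppose at some stage the remaining set $R$ has $|R|\geq\gamma n$ but every ball has fewer than $|R|/\mathrm{poly}(h)$ points of $R$.

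In that case we apply \cref{thm:pure-pairs-subdiv-poly} with $H=K_h$ and the normalised counting measure on $R$, with $\eta\approx 1/(2h^2)$ and $\epsilon\approx 1/(5h)$. Outcome (3) is excluded because $G$ is $\Ksubdiv{h}$-induced-minor-free, and outcome (1) is excluded by the small-ball assumption. Outcome (2) yields sets $A,B\subseteq R$ of size $\Omega(|R|/h)$ with $d(A,B)>\Delta/c'$.

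The difficulty is that this outcome alone gives only distortion $\Oh(h^2)$, via \cref{lemma:well-spread}. What I would try first is to use outcome (2) recursively: split $R$ along the threshold $d(\cdot,A)$ and recurse on each side at the same scale. The goal is to charge the recursion so that the total number of ball-centres produced is $\mathrm{poly}(h)$ and the depth is $\Oh(\log h)$. I would then fold the separating functions $d(\cdot,A)$ that arise at each level into the Bourgain-style construction of Step 3, rather than paying $1/\epsilon^2$ for a single one.

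\textbf{Step 3 (embed the net and extend).} Let $N=\{x_1,\dots,x_k\}$, give $x_i$ weight equal to the number of vertices assigned to its ball, and let $\pi(v)$ denote the centre of the ball covering $v$. We apply the weighted form of the Feige/Bourgain embedding (\cite[Theorem~3.1]{feige2008improved}) to $(N,d)$ with these weights. This gives a non-expansive $g\colon N\to\RR$ with weighted average distortion $\Oh(\log k)=\Oh(\log h)$. We extend $g$ to all of $V$ by the McShane extension $f(v)=\min_i\bigl(g(x_i)+d(v,x_i)\bigr)$, which is still non-expansive.

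It remains to bound the loss from moving each point to its centre. For covered pairs, $|d(u,v)-d(\pi(u),\pi(v))|\leq 2\Delta/c$ and $|f(u)-f(\pi(u))|\leq\Delta/c$. For $c$ large, both errors total a small constant fraction of $n^2\Delta=\sum d$. The uncovered $\gamma n$ vertices contribute at most $\Oh(\gamma)\,n^2\Delta$ plus a term handled by one extra application of the triangle-inequality bound from \cref{lemma:large-core}. Putting these together, $\sum|f(u)-f(v)|\geq\Omega(1/\log h)\sum d(u,v)$.

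For the algorithm, every step is constructive: the greedy covering, \cref{thm:pure-pairs-subdiv-poly}, the Feige embedding, and the extension all run in polynomial time. Whenever outcome (3) appears, we output the induced minor model of $\Ksubdiv{h}$ instead.
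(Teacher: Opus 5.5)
\paragraph{Gap in Step 2.} Step 2 is the entire difficulty, and it is not only unproven (you describe only what you ``would try first''): its target is false. Take the star $K_{1,n-1}$ with unit weights. It is planar, hence $\Ksubdiv{5}$-induced-minor-free, so $h=5$ is fixed. Here $d_w^G$ is the graph distance and $\Delta=2(n-1)^2/n^2<2$. So for every $c>2$, each ball of radius $\Delta/c$ is a single vertex. Step~1 therefore does not fire, yet covering $(1-\gamma)n$ vertices needs $(1-\gamma)n$ balls, not $\mathrm{poly}(h)$. No recursion can produce a $\mathrm{poly}(h)$-point net at scale $\Delta/c$, so the factor $\log h$ cannot come from $\log|N|$ in a Bourgain/Feige embedding.

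\paragraph{Quantitative problems in Step 3.} Step 3 fails even granting a net. The additive error of order $\Delta/c$ per pair must be small compared with the main term of order $n^2\Delta/\log k$. So $c$ would have to be $\Omega(\log h)$, not an absolute constant, which makes the covering even harder. Also, the claim that the $\gamma n$ uncovered vertices carry only $\Oh(\gamma)n^2\Delta$ of the distance mass fails when they are far-away outliers.

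\paragraph{Missing idea.} What you need is a region-growing (peeling) argument that turns the pure-pairs dichotomy directly into one cut at scale $\Delta/\Oh(\log h)$. The paper first disposes, via \cref{lemma:large-core}, of the case where some ball of radius $12\delta$ (with $\delta=\Delta/33$) has at least $n/4$ vertices. It then maintains $A_i$ (initially empty) and $B_i$ (initially $V$). While $|B_i|>n/2$, it applies \cref{thm:pure-pairs-subdiv} with the terminal measure $\mu(X)=|X\cap B_i|/|B_i|$. Either outcome gives $S\subseteq B_i$ with $|S|>\eta|B_i|$ and $|B_i\cap B(S,\delta/2)|\leq|B_i|/2$. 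In the ball outcome this holds because $B(S,\delta/2)$ lies inside a $12\delta$-ball. In the far-pair outcome, take whichever of the two sets has the smaller $\delta/2$-neighbourhood in $B_i$.

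Now split the radius $\delta/2$ into $k=\lceil\log_2(1/\eta)\rceil=\Oh(\log h)$ shells of width $r=\delta/(2k)$. By the doubling pigeonhole, some consecutive neighbourhoods satisfy $|T_{j+1}|\leq 2|T_j|$. Move $T_j$ into $A$ and delete $T_{j+1}$ from $B$. This keeps $d(A,B)>r$ and credits at least half of each removed chunk to $A$. At the end, $|A_t|,|B_t|\geq n/4$ and $d(A_t,B_t)>\Delta/(66k)$. \Cref{lemma:well-spread} applied to $d(\cdot,A_t)$ then gives average distortion $\Oh(k)=\Oh(\log h)$. The $\log h$ comes from $\log(1/\eta)$ in this pigeonhole, not from the size of a net.
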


We note that \cref{thm:improved-embed} has the best-possible dependence on $h$ (realized by expanders).

\restate{theorem}{thm:improved-main}

Note that \cref{thm:improved-main} follows immediately from \cref{thm:improved-embed} and the standard steps in \cref{sec:standard-relax-embed-sep-results}.

The initial form of the improvement in this appendix was devised by GPT 5.6 Sol.
It has since undergone significant simplification by the authors in its presentation here.
A formalization of this result appears in the same repository alongside the base results.

The intuition for proving \cref{thm:improved-embed} is as follows.
First, note that \cref{thm:pure-pairs-subdiv} can be applied with ``terminals''.
That is, for a subset of vertices, $U\subset V$, we can use an outer measure $\mu(X):=|X\cap U|/|U|$.
With such an outer measure, we maintain the polynomial-time computability stated in \cref{thm:pure-pairs-subdiv-poly}.
Then, the existing proof of \cref{thm:embed} in \cref{sec:main-proofs} aims to directly map the applicable cases of \cref{thm:pure-pairs-subdiv-poly} to one of \cref{lemma:large-core} and \cref{lemma:well-spread}.
In contrast, in this section we will apply \cref{lemma:large-core} first to a trivial case, and then we will iteratively apply \cref{thm:pure-pairs-subdiv-poly} to a shrinking terminal set of the vertices, via a metric peeling argument.
After enough iterations, we eventually apply \cref{lemma:well-spread} to the residual terminal set and all the peeled vertices.

\begin{proof}[Proof of \cref{thm:improved-embed}]
It suffices to consider only the metric $d:=d_w^G$, since $d\leq\dbar_w^G\leq 2d$.

Write $n:=|V|$ and $\Delta:=n^{-2}\sum_{x,y\in V}d(x,y)$.
As in the proof of \cref{thm:embed}, we may assume $h>1$.
If $\Delta=0$, the theorem is trivial, so we also assume $\Delta>0$.
Set
\[\epsilon:=\frac{13}{40h}{,} \qquad \eta:=\frac{1}{32h^2}{,} \qquad \delta:=\frac{\Delta}{33}{.}\]
Note that $\epsilon\geq\eta$ and
\[(2h+2)\epsilon+\left(h+\binom{h}{2}\right)\eta=\frac{213(h+1)}{320h}<1{.}\]

We first eliminate a trivial case.
If some ball $B(x,12\delta)$ (for $x\in V$) contains at least $n/4$ vertices, \cref{lemma:large-core} (with $\kappa:=1/4$ and $c=33/12>2$) gives a non-expansive embedding with constant average distortion.
Henceforth, assume without loss of generality that every such ball contains fewer than $n/4$ vertices.

Set
\[k:=\left\lceil\log_2\frac{1}{\eta}\right\rceil=\Oh(\log h){,}\qquad r:=\frac{\delta}{2k}=\frac{\Delta}{66k}{.}\]
Initialize $A_0:=\emptyset$ and $B_0:=V$.
We will perform a sequence of iterations.
In each iteration $i$, we will move some vertices from $B_i$ into $A_i$, while discarding others, obtaining $A_{i+1}\supseteq A_i$ and $B_{i+1}\subseteq B_i$.
We will halt at the first index $t$ with $|B_t|\leq n/2$, or upon finding an induced minor model of $\Ksubdiv{h}$.
For each iteration $i$ with $|B_i|>n/2$, we perform the following three steps.

\begin{enumerate}
\item We seek a set $S\subseteq B_i$ where
\[|S|>\eta|B_i| \quad \text{and} \quad |B_i\cap B(S,\delta/2)|\leq |B_i|/2{.}\]
Apply \cref{thm:pure-pairs-subdiv} to $G$ with target $K_h$ and the outer measure $\mu(X):=|X\cap B_i|/|B_i|$, which satisfies $\mu(V)=1$.
We handle each of its cases separately.
\begin{enumerate}
\item[(a)] Assume there is $x\in V$ with $\mu(B(x,11\delta))>\eta$.
Set $S:=B_i\cap B(x,11\delta)$, so $|S|>\eta|B_i|$.
By the triangle inequality,
\[B(S,\delta/2)\subseteq B(x,11\delta+\delta/2)\subseteq B(x,12\delta){.}\]
Thus $|B_i\cap B(S,\delta/2)|<n/4<|B_i|/2$, as required.
\item[(b)] Assume there are sets $P,Q\subseteq V$ with $\mu(P),\mu(Q)>\epsilon$ and $d(P,Q)>\delta$.
The balls $B(P,\delta/2)$ and $B(Q,\delta/2)$ are disjoint by the triangle inequality.
Hence, after interchanging $P$ and $Q$ if necessary, we may assume $|B_i\cap B(P,\delta/2)|\leq|B_i|/2$.
Set $S:=B_i\cap P$.
Then $|S|>\epsilon|B_i|\geq\eta|B_i|$ and $B(S,\delta/2)\subseteq B(P,\delta/2)$, which gives both required bounds.
\item[(c)] Assume $G$ contains $\Ksubdiv{h}$ as an induced minor.
We obtain its induced minor model and halt.
\end{enumerate}
\item
We seek two nested neighborhoods of $S$ in $B_i$ whose radii differ by $r$ and whose sizes differ by at most a factor of two.
For $j=0,\ldots,k$, let $T_j:=\{x\in B_i\colon d(x,S)\leq jr\}$.
Since $kr=\delta/2$,
\[S\subseteq T_0\subseteq T_1\subseteq\cdots\subseteq T_k=B_i\cap B(S,\delta/2){.}\]
Choose an index $j<k$ with $|T_{j+1}|\leq2|T_j|$.
Such an index $j$ exists, since otherwise
\[|B_i|/2\geq |T_k|>2^k|T_0|>2^k\eta|B_i|\geq|B_i|{.}\]
\item Set
\[A_{i+1}:=A_i\cup T_j{,}\qquad B_{i+1}:=B_i\setminus T_{j+1}{.}\]
\end{enumerate}

Each iteration removes between $1$ and $|B_i|/2$ vertices from $B_i$, since $\emptyset\neq S\subseteq T_{j+1}\subseteq T_k$.
Thus, unless we find a model of $\Ksubdiv{h}$, the process stops after at most $n$ iterations with
\[n/4<|B_{t-1}|/2\leq|B_t|\leq n/2{.}\]
The sets added to $A_i$ are disjoint, and each has size $|T_j|\geq |T_{j+1}|/2$.
Therefore,
\[|A_t|\geq\frac{1}{2}\sum_{i=0}^{t-1}(|B_i|-|B_{i+1}|)=\frac{n-|B_t|}{2}\geq\frac{n}{4}{.}\]

We next show that $d(A_t,B_t)>r$.
In iteration $i$, for $x\in T_j$ and $y\in B_{i+1}$, the triangle inequality gives
\[d(x,y)\geq d(y,S)-d(x,S)>(j+1)r-jr=r{.}\]
Since $B_t\subseteq B_{i+1}$, every vertex added to $A_t$ is more than $r$ from every vertex of $B_t$.

By \cref{lemma:well-spread} (with $\kappa:=1/4$ and $c=66k$), the non-expansive map $f(x):=d(x,A_t)$ on $V$ has average distortion at most $1056k=\Oh(\log h)$.
Each of the at most $n$ iterations runs in polynomial-time by the algorithm underlying \cref{thm:pure-pairs-subdiv} since the terminal measure can be computed in polynomial time.
\end{proof}

\end{document}